\newtheorem{theorem}{Theorem}[section]
\newtheorem*{theorem*}{Theorem}
\newtheorem{lemma}[theorem]{Lemma}
\newtheorem{proposition}[theorem]{Proposition}
\newtheorem*{proposition*}{Proposition}
\newtheorem{corollary}[theorem]{Corollary}
\newtheorem*{corollary*}{Corollary}
\newtheorem{conjecture}[theorem]{Conjecture}
\newtheorem*{conjecture*}{Conjecture}
\newtheorem*{question*}{Question}
\newtheorem{cit}[theorem]{Citation}
\newtheorem{observation}[theorem]{Observation}
\newtheorem*{main:LM_htpy_infty_zero}{Theorem~\ref{thrm:LM_htpy_infty_zero}}
\newtheorem*{main:BNS}{Theorem~\ref{thrm:BNS}}
\theoremstyle{definition}
\newtheorem{definition}[theorem]{Definition}
\newtheorem{remark}[theorem]{Remark}
\newcommand{\Z}{\mathbb{Z}}
\newcommand{\N}{\mathbb{N}}
\newcommand{\R}{\mathbb{R}}
\newcommand{\LMsmall}{G}
\newcommand{\LMleft}{{}_yG}
\newcommand{\LMright}{G_y}
\newcommand{\LMbig}{{}_yG_y}
\newcommand{\onto}{\twoheadrightarrow}
\newcommand{\defeq}{\mathbin{\vcentcolon =}}
\DeclareMathOperator{\F}{F}
\numberwithin{equation}{section}
\begin{document}

\title[On the Lodha--Moore groups]{HNN decompositions of the Lodha--Moore 
groups, and topological applications}
\date{\today}
\subjclass[2010]{Primary 20F65;   
                 Secondary 57M07, 
                           20E06, 
                           20F69} 

\keywords{Thompson group, HNN extension, BNS-invariant, finiteness properties, 
fundamental group at infinity}

\author{Matthew C.~B.~Zaremsky}
\address{Department of Mathematical Sciences, Binghamton University, 
Binghamton, 
NY 13902}
\email{zaremsky@math.binghamton.edu}

\begin{abstract}
 The Lodha--Moore groups provide the first known examples of type $\F_\infty$ groups that are non-amenable and contain no non-abelian free subgroups. These groups are related to Thompson's group $F$ in certain ways, for instance they contain it as a subgroup in a natural way. We exhibit decompositions of four Lodha--Moore groups, $\LMsmall$, $\LMright$, $\LMleft$ and $\LMbig$, into ascending HNN extensions of isomorphic copies of each other, both in ways reminiscent to such decompositions for $F$ and also in quite different ways. This allows us to prove two new topological results about the Lodha--Moore groups. First, we prove that they all have trivial homotopy groups at infinity; in particular they are the first examples of groups satisfying all four parts of Geoghegan's 1979 conjecture about $F$. Second, we compute the Bieri--Neumann--Strebel invariant $\Sigma^1$ for the Lodha--Moore groups, and get some partial results for the Bieri--Neumann--Strebel--Renz invariants $\Sigma^m$, including a full computation of $\Sigma^2$.
\end{abstract}

\maketitle
\thispagestyle{empty}

\section*{Introduction}

The Lodha--Moore groups constructed by Lodha and Moore in \cite{lodha13,lodha14} provide the first known examples of type $\F_\infty$ groups that are non-amenable and contain no non-abelian free subgroups. These groups are closely related to Thompson's group $F$, and all contain it in a natural way. One Lodha--Moore group, which we denote by $\LMsmall$ and which was denoted $G_0$ in \cite{lodha13}, admits a presentation with only three generators and nine relations. The examples arise as subgroups of Monod's groups of piecewise projective homeomorphisms of the circle \cite{monod13}.

In 1979 Geoghegan made four conjectures about $F$, namely:
\begin{itemize}
 \item[(1)] It is of type $\F_\infty$.
 \item[(2)] It has no non-abelian free subgroups.
 \item[(3)] It is non-amenable.
 \item[(4)] It has trivial homotopy groups at infinity.
\end{itemize}
Brown and Geoghegan proved (1) and (4) \cite{brown84}, and Brin and Squier proved (2) \cite{brin85}. Conjecture (3) remains famously open. One can consider these four conjectures for the Lodha--Moore groups as well, in which case (2) and (3) were proved in \cite{lodha13} and (1) in \cite{lodha14}. This leaves only (4). In this paper we prove that the Lodha--Moore groups do indeed satisfy conjecture (4), and so provide the first examples of groups satisfying all four parts of Geoghegan's conjecture for $F$.

\begin{main:LM_htpy_infty_zero}
 The homotopy groups at infinity  of any Lodha--Moore group are trivial.
\end{main:LM_htpy_infty_zero}

The key tool is to exhibit decompositions of the Lodha--Moore groups into ascending HNN extensions of isomorphic copies of each other. In addition to $\LMsmall$, we consider Lodha--Moore-style groups $\LMright$, $\LMleft$ and $\LMbig$. The last of these was considered in \cite{lodha13}, denoted $G$ there, and the other two are obvious additions to the family. The groups are arranged via $F\subset \LMsmall \subset \LMleft,\LMright \subset \LMbig$. We prove:

\begin{theorem*}
 Both $\LMsmall$ and $\LMbig$ decompose as ascending HNN extensions of $\LMright$, and also of $\LMleft$. Both $\LMright$ and $\LMleft$ decompose as ascending HNN extensions of $\LMsmall$, and also of $\LMbig$.
\end{theorem*}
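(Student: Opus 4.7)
The plan is to model each claimed HNN decomposition on the classical ascending HNN decomposition of Thompson's $F$, in which $x_0$ is the stable letter and the base is the copy $F_{[1/2,1]} \cong F$ of $F$ supported on the right half of $[0,1]$. For each of the eight claimed decompositions, with HNN extension $H$ and claimed base $K$, I would exhibit a stable letter $t \in H$ (typically $x_0$ for the right-shifting decompositions and $x_1$ or an analogous left-right-reflected element for left-shifting ones) together with a subgroup $K' \leq H$ abstractly isomorphic to $K$, realized concretely as a copy of $K$ supported on a sub-half-interval. The ambient inclusions $\LMsmall \subseteq \LMright, \LMleft \subseteq \LMbig$ do not obstruct the ``larger'' group $\LMbig$ from sitting inside the ``smaller'' $\LMsmall$ once supports are restricted, exactly as $F \cong F_{[1/2,1]} \subsetneq F$. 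I would then verify that $t^{-1} K' t \subsetneq K'$ is a strict self-embedding and that $H = \langle K', t\rangle$; by Britton's lemma this exhibits $H$ as the ascending HNN extension of $K'$ along conjugation by $t$.

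The central computation is tracking how the piecewise projective generator $y$ of the Lodha--Moore family transforms under conjugation by an element of the Thompson subgroup $F \subseteq H$. Since $t \in F$ is piecewise linear, $t^{-1} y t$ is again piecewise projective with support equal to the $t^{-1}$-image of $\mathrm{supp}(y)$. The choice of stable letter (right-shift versus left-shift) together with the choice of which $y$-type generators are adjoined on which side determines which of the four variants $\LMsmall, \LMright, \LMleft, \LMbig$ plays the role of base and which the role of extension. The eight statements thus reduce to parallel verifications that each relevant conjugation lands exactly in the target subgroup $K'$ and no larger.

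The generating claim $H = \langle K', t\rangle$ should follow by a standard squeezing argument: any generator of $H$ whose support is not already contained in the relevant half-interval can be conjugated by a sufficiently high power of $t^{-1}$ into one whose support is, at which point it lies in $K'$ by the previous step. The main obstacle I anticipate is the bookkeeping in the conjugation step: one must verify that conjugates of the $y$-generators land precisely in $K'$ (neither missing it nor escaping to a strictly larger subgroup), which requires careful attention to which breakpoints are permitted as supports of $y$-type generators in each of the four variants. Once the correct stable letters and base subgroups are matched up, the remaining verifications are combinatorially delicate but conceptually routine.
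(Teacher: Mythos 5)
There is a genuine gap: your method only produces half of the eight required decompositions. Your scheme always takes the stable letter $t$ from the Thompson subgroup $F$ and the base to be a copy of $K$ supported on a subinterval (a ``semi-deferred'' subgroup in the paper's language). That works exactly when the base variant has \emph{at least as many} allowed $y$-addresses as the ambient group, because restricting to the subtree below an address $s$ can only delete excluded $y$-addresses, never create new ones: the subgroups of $\LMsmall$ supported below $s$ are copies of $\LMright$, $\LMleft$ or $\LMbig$, the supported subgroups of $\LMright$ are copies of $\LMright$ or $\LMbig$, and every supported subgroup of $\LMbig$ is again a copy of $\LMbig$. So you can realize $\LMsmall$ over $\LMright$ and $\LMleft$, and $\LMright$, $\LMleft$ over $\LMbig$, this way --- but you cannot realize $\LMright$ or $\LMleft$ over $\LMsmall$, nor $\LMbig$ over $\LMright$ or $\LMleft$, since no subinterval copy inside the ambient group is isomorphic to the required smaller variant. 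Those four decompositions are half of the theorem.

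For those four the paper uses the \emph{naturally embedded} smaller group as the base (not a support-restricted copy) and a $y$-type stable letter ($y_0^{-1}$ or $y_1$). Here your central computational device breaks down: the stable letter is not piecewise linear, so ``conjugation moves supports'' tells you nothing about whether $y_0^{-1}\,\LMright\, y_0\subseteq\LMright$. The required input is an honest relation computation, namely $y_0 x_\emptyset y_0^{-1}=x_\emptyset^2 y_{110}y_{10}^{-1}x_1^{-1}\in\LMsmall$, derived from the defining relations (LM1)--(LM5) via the identity $y_s=y_{s1}y_{s01}^{-1}y_{s00}x_s$ (the remaining generators $x_1,y_1,y_{10}$ of $\LMright$ genuinely commute with $y_0$, so only $x_\emptyset$ needs work). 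Strictness of the ascent is also not a support statement: one must show $y_0^{-1}x_0y_0\notin\LMright$, which the paper does by rewriting it as $y_{011}^{-1}y_{010}y_{00}^{-1}y_0$ and invoking the Lodha--Moore standard-form (normal form) theory to see that a word forced to use generators $y_{0^n}$ cannot lie in $\LMright$. Neither of these steps is reachable from the bookkeeping you describe, so you should treat the four ``$y$-conjugation'' decompositions as requiring a separate argument rather than a parallel verification.
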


More precise formulations are found in Lemmas~\ref{lem:F_like_hnn} and~\ref{lem:weird_hnn}, and Corollaries~\ref{cor:F_like_hnns} and~\ref{cor:weird_hnns}.

Note that $F$ decomposes as an ascending HNN extension of an isomorphic copy of \emph{itself}. This was one key observation toward proving conjecture (4) for $F$ in \cite{brown84}. In particular, we consider this fact to be an example of the Lodha--Moore groups being ``$F$-like.'' In this same vein, the fact that these groups contain no non-abelian free groups follows in an essentially identical way to the same fact for $F$, from \cite{brin85}. In other ways though, the groups are not very $F$-like. For instance the non-amenability of the groups follows via arguments based on work of Ghys and Carri\`ere \cite{ghys85}, which have no chance of being adapted to $F$.

Obtaining HNN decompositions helps to prove the fourth part of the Geoghegan conjecture for these groups (Theorem~\ref{thrm:LM_htpy_infty_zero}), and has another interesting application as well. Namely, it helps us to compute their Bieri--Neumann--Strebel (BNS) invariants. The BNS-invariant of a group is a geometric object that reveals more subtle finiteness properties of the group; in particular it encodes information describing precisely which normal subgroups corresponding to abelian quotients are finitely generated. We prove:

\begin{main:BNS}
 The BNS-invariant $\Sigma^1(H)$ of any Lodha--Moore group $H$ is of the form $S^2 \setminus P$ for some set $P$ with $|P|=2$.
\end{main:BNS}

The actual formulation of this theorem, e.g., describing $P$, involves a lot of notation, which will be introduced in the sections leading up to the precise formulation, and for the sake of this introduction we will not give these details yet. Our computation of $\Sigma^1$ again falls into the category of ways in which the Lodha--Moore groups are $F$-like, in that their BNS-invariants are obtained from their character spheres by removing exactly two points.

After computing the BNS-invariants, we consider the higher BNSR-invariants $\Sigma^m$. We give strong evidence that they are all obtained just by removing the convex hull of the two points missing from $\Sigma^1$, as is also the case for $F$ \cite{bieri87,bieri10}. We prove this for the case $m=2$; see Theorem~\ref{thrm:BNSR} and Section~\ref{sec:Sigma^2}.

\medskip

The paper is organized as follows. In Section~\ref{sec:groups} we recall the construction of the groups, compute their abelianizations, and exhibit some important discrete characters, i.e., homomorphisms to $\Z$, for the groups. In Section~\ref{sec:HNN} we inspect how these groups decompose as strictly ascending HNN extensions of each other. In Section~\ref{sec:htpy_infty} we show that they have trivial homotopy groups at infinity. In Section~\ref{sec:inv} we recall the BNS-invariant, and some important tools to compute it, and then compute the BNS-invariants of all the Lodha--Moore groups. In Section~\ref{sec:bnsr} we recall the higher BNSR-invariants $\Sigma^m$ for $m\in\N\cup\{\infty\}$, and reduce the problem of computing them for the Lodha--Moore groups to proving a single conjecture, about one certain subgroup being of type $\F_\infty$ (Conjecture~\ref{conj:ker_psi_F_m}). We prove that this subgroup is at least finitely presented, and so obtain a complete computation of $\Sigma^2$ for each Lodha--Moore group (Theorem~\ref{thrm:BNSR} for $m=2$).

As a remark, it is recommended, though not strictly necessary, that the reader be familiar with Lodha and Moore's paper \cite{lodha13} before attempting to read the present work. On the other hand, specialized knowledge of homotopy at infinity or BNS-invariants is not particularly necessary before reading the present work.

\subsection*{Acknowledgments} I am grateful to Matt Brin, Ross Geoghegan and Yash Lodha for many helpful discussions. Also thanks to Ross Geoghegan and Mike Mihalik for both, independently, asking whether Theorem~\ref{thrm:LM_htpy_infty_zero} was true (and hence motivating what became Section~\ref{sec:htpy_infty}).


\section{The groups}\label{sec:groups}

We will first define $\LMbig$, and then define $\LMsmall,\LMleft,\LMright$ as subgroups of $\LMbig$.

We will view elements of the group $\LMbig$ as functions on the Cantor set $2^\N$. Thus an element will be specified by declaring how it acts on an arbitrary infinite string of $0$s and $1$s. The starting point is two primitive functions, $x$ and $y$, defined as follows. (Our actions are on the right, following \cite{lodha13}.)

\begin{align*}
 \xi.x \defeq \left\{\begin{array}{ll} 0\eta & \text{ if } \xi=00\eta\\
                                      10\eta & \text{ if } \xi=01\eta\\
                                      11\eta & \text{ if } \xi=1\eta\end{array}\right.
\end{align*}

\begin{align*}
 \xi.y \defeq \left\{\begin{array}{ll} 0(\eta.y) & \text{ if } \xi=00\eta\\
                                      10(\eta.y^{-1}) & \text{ if } \xi=01\eta\\
                                      11(\eta.y) & \text{ if } \xi=1\eta\end{array}\right.
\end{align*}

One can extrapolate how $x^{-1}$ and $y^{-1}$ act on $2^\N$ as well.

Now let $2^{<\N}$ be the set of finite binary sequences. For any $s\in 2^{<\N}$, we can define functions $x_s$ and $y_s$ that act like $x$ or $y$ ``at'' the address $s$ and otherwise act like the identity:

\begin{align*}
 \xi.x_s \defeq \left\{\begin{array}{ll} s(\eta.x) & \text{ if } \xi=s\eta\\
                                               \xi & \text{ otherwise}\end{array}\right.
\end{align*}

\begin{align*}
 \xi.y_s \defeq \left\{\begin{array}{ll} s(\eta.y) & \text{ if } \xi=s\eta\\
                                               \xi & \text{ otherwise}\end{array}\right.
\end{align*}

The group $\langle x_s\mid s\in 2^{<\N}\rangle$ is Thompson's group $F$.

We now define the Lodha--Moore groups as follows. Here $0^n$ and $1^n$ denote words consisting of a string of $n$ such symbols. In particular $0^0=1^0=\emptyset$, the empty word. Also, $\N_0$ denotes $\N\cup\{\infty\}$.

$$\LMbig\defeq\langle x_s, y_t\mid s,t\in 2^{<\N}\rangle \text{.}$$

$$\LMright\defeq\langle x_s, y_t\mid s,t\in 2^{<\N}\text{, } t\not\in\{0^n\}_{n\in\N_0}\rangle$$

$$\LMleft\defeq\langle x_s, y_t\mid s,t\in 2^{<\N}\text{, } t\not\in\{1^n\}_{n\in\N_0}\rangle$$

$$\LMsmall\defeq\langle x_s, y_t\mid s,t\in 2^{<\N}\text{, } t\not\in\{0^n,1^n\}_{n\in\N_0}\rangle$$

In words, the differences between the groups involve at which addresses in $2^{<\N}$ we allow a $y$ to act. In $\LMbig$, a $y$ can act anywhere, in $\LMright$, a $y$ can act anywhere except not at addresses of the form $0^n$, in $\LMleft$, a $y$ can act anywhere except not at addresses $1^n$, and in $\LMsmall$, a $y$ can act anywhere except not at addresses $0^n$ or $1^n$. The notation is a visual reminder of where $y$'s can act. For instance $\LMleft$ contains the element $y_0$ but not $y_1$.

In \cite{lodha13}, presentations are given for $\LMsmall$ and $\LMbig$. It is straightforward to extrapolate these to find presentations for $\LMleft$ and $\LMright$. We state the defining relations for $\LMbig$, and to restrict to the others we just restrict which subscripts are used for the $y$ generators. The relations are:

\begin{itemize}
 \item[(LM1)] $x_s^2=x_{s0} x_s x_{s1}$ for $s\in 2^{<\N}$
 \item[(LM2)] $x_t x_s = x_s x_{t.x_s}$ for $s,t\in 2^{<\N}$ with $t.x_s$ defined
 \item[(LM3)] $y_t x_s = x_s y_{t.x_s}$ for $s,t\in 2^{<\N}$ with $t.x_s$ defined
 \item[(LM4)] $y_s y_t = y_t y_s$ for $s,t\in 2^{<\N}$ with neither $s$ nor $t$ a prefix of the other
 \item[(LM5)] $y_s = x_s y_{s0} y_{s10}^{-1} y_{s11}$ for $s\in 2^{<\N}$
\end{itemize}

The action of $F$ on $2^\N$ restricts to a partial action of $x$ on $2^{<\N}$. When we say ``$t.x_s$ is defined, we mean that $x_s$ can act on $t$.

\subsection{Strand diagrams}\label{sec:strands}

In \cite{belk14}, Belk and Matucci develop the \emph{strand diagram} model for elements of $F$. This is related to the well known \emph{paired tree diagram} model. We are using right actions, and so here to get from a paired tree diagram to a strand diagram, pictorially we reflects the first (domain) tree upside down and attach its leaves to those of the second (range) tree. In \cite{lodha13}, Lodha and Moore discuss a paired tree diagram model for elements of their groups. It is natural then to also develop a strand diagram model, which is what we do in this subsection. We will not be overly rigorous, since this is just a helpful tool that makes computations easier.

First we discuss elements of $F$ represented as strand diagrams (see \cite{belk14} for more details). A strand diagram is a picture of a single strand splitting into multiple strands, always via bifurcation, and then the multiple strands merging two at a time, with possible further splitting and merging until ultimately everything merges back down into one strand. With our present convention, the merges represent the carets of the (now upside down) first tree, and the splits represent the carets of the second tree.

Two strand diagrams may represent the same element; the equivalence relation on such diagrams is generated by reduction moves of two forms, namely, those in the following picture:

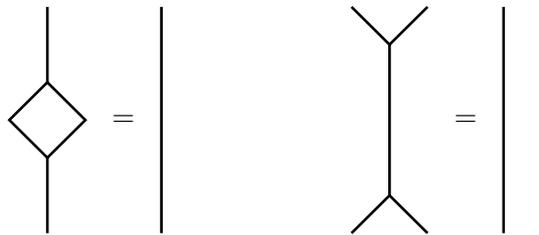
\begin{figure}[htb]
\begin{tikzpicture}[line width=1pt]
 \draw (0,0) -- (0,-1) -- (-0.5,-1.5) -- (0,-2) -- (0.5,-1.5) -- (0,-1)   (0,-2) -- (0,-3);
 \node at (1,-1.5) {$=$};
 \draw (1.5,0) -- (1.5,-3);
 \draw (4,0) -- (4.5,-0.5) -- (5,0)   (4.5,-0.5) -- (4.5,-2.5) -- (4,-3)   (4.5,-2.5) -- (5,-3);
 \node at (5.5,-1.5) {$=$};
 \draw (6,0) -- (6,-3)   (6.5,0) -- (6.5,-3);
\end{tikzpicture}
\caption{Fundamental reduction moves in strand diagrams for $F$.}\label{fig:F_strand_moves}
\end{figure}

Multiplication in $F$ is modeled by stacking strand diagrams and performing reduction moves. A product $ab$ corresponds to stacking the strand diagrams for $a$ and $b$. Since we are using right actions, in the product $ab$ we stack $b$ on top of $a$.

Now we introduce strand diagrams for elements of Lodha--Moore groups. Since $F=\langle x_s\rangle_s$, we already have pictures for the $x_s$ and we just need to invent pictures for the $y_s$. Moreover, the nodes of a finite binary tree are labeled by finite binary sequences, so once we have a picture for $y_\emptyset$, we will have one for any $y_s$, by putting the picture at the address $s$. So, we just need to declare how to represent $y_\emptyset$, and then check that the relations are correctly modeled. We choose as a strand diagram for $y_\emptyset$ a picture of a single strand with a counterclockwise ``cyclone'' $\circlearrowleft$ in the middle. Similarly $y_\emptyset^{-1}$ is a single strand with a clockwise cyclone $\circlearrowright$. Now elements of $\LMbig$ are represented by strand diagrams with cyclones on some strands. See Figure~\ref{fig:LM_strand_example} for an example.

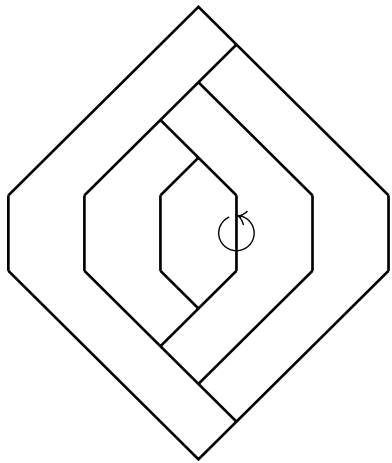
\begin{figure}[htb]
\begin{tikzpicture}[line width=1pt]
 \draw (0,0) -- (2.5,2.5) -- (5,0)   (1,0) -- (3,2)   (2.5,1.5) -- (4,0)   (2,1) -- (3,0)   (2,0) -- (2.5,0.5);
 \draw (0,0) -- (0,-1)   (1,0) -- (1,-1)   (2,0) -- (2,-1)   (3,0) -- (3,-1)   (4,0) -- (4,-1)   (5,0) -- (5,-1);
 \node at (3,-0.5) {\huge $\circlearrowleft$};
 \begin{scope}[yscale=-1,yshift=1cm]
  \draw (0,0) -- (2.5,2.5) -- (5,0)   (1,0) -- (3,2)   (2.5,1.5) -- (4,0)   (2,1) -- (3,0)   (2,0) -- (2.5,0.5);
 \end{scope}
\end{tikzpicture}
\caption{Strand diagram for $y_{10011}$.}\label{fig:LM_strand_example}
\end{figure}

It will not come up for us, but for an element like $y_s^2$, one could just draw two counterclockwise cyclones on the same strand.

These strand diagrams are still considered up to equivalence. The old reduction moves from $F$ still hold, and the new reduction moves include: if a counterclockwise cyclone and a clockwise cyclone share the same segment of a strand, we may delete both (corresponding to $y_s y_s^{-1}=y_s^{-1}y_s=1$), and the following ``expansion'' moves shown in Figure~\ref{fig:LM_strand_moves}:

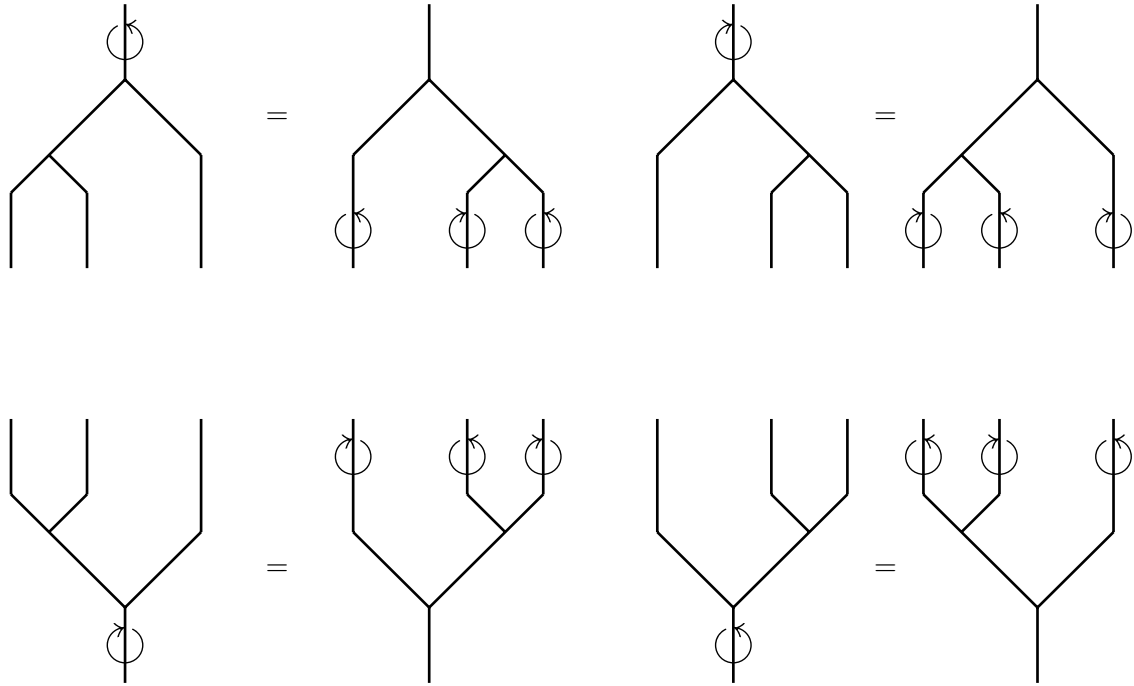
\begin{figure}[htb]
\begin{tikzpicture}[line width=1pt]
 \begin{scope}[xshift=-4cm,xscale=-1]
  \draw (0,1) -- (0,0)   (-1,-1) -- (0,0) -- (1.5,-1.5)   (0.5,-1.5) -- (1,-1)   (-1,-1) -- (-1,-2.5)   (1.5,-1.5) -- (1.5,-2.5)   (0.5,-1.5) -- (0.5,-2.5);
	\node at (0,0.5) {\huge $\circlearrowleft$}; \node at (-2,-0.5) {$=$};
 \end{scope}
 \draw (0,1) -- (0,0)   (-1,-1) -- (0,0) -- (1.5,-1.5)   (0.5,-1.5) -- (1,-1)   (-1,-1) -- (-1,-2.5)   (1.5,-1.5) -- (1.5,-2.5)   (0.5,-1.5) -- (0.5,-2.5);
 \node at (-1,-2) {\huge $\circlearrowleft$}; \node at (0.5,-2) {\huge $\circlearrowright$}; \node at (1.5,-2) {\huge $\circlearrowleft$};

\begin{scope}[xshift=4cm]
\draw (0,1) -- (0,0)   (-1,-1) -- (0,0) -- (1.5,-1.5)   (0.5,-1.5) -- (1,-1)   (-1,-1) -- (-1,-2.5)   (1.5,-1.5) -- (1.5,-2.5)   (0.5,-1.5) -- (0.5,-2.5);
 \node at (0,0.5) {\huge $\circlearrowright$}; \node at (2,-0.5) {$=$};
 \begin{scope}[xshift=4cm,xscale=-1]
  \draw (0,1) -- (0,0)   (-1,-1) -- (0,0) -- (1.5,-1.5)   (0.5,-1.5) -- (1,-1)  (-1,-1) -- (-1,-2.5)   (1.5,-1.5) -- (1.5,-2.5)   (0.5,-1.5) -- (0.5,-2.5);
	\node at (-1,-2) {\huge $\circlearrowright$}; \node at (1.5,-2) {\huge $\circlearrowright$}; \node at (0.5,-2) {\huge $\circlearrowleft$};
 \end{scope}
\end{scope}

\begin{scope}[yshift=-7cm,yscale=-1]
 \begin{scope}[xshift=-4cm,xscale=-1]
   \draw (0,1) -- (0,0)   (-1,-1) -- (0,0) -- (1.5,-1.5)   (0.5,-1.5) -- (1,-1)   (-1,-1) -- (-1,-2.5)   (1.5,-1.5) -- (1.5,-2.5)   (0.5,-1.5) -- (0.5,-2.5);
	 \node at (0,0.5) {\huge $\circlearrowright$}; \node at (-2,-0.5) {$=$};
  \end{scope}
  \draw (0,1) -- (0,0)   (-1,-1) -- (0,0) -- (1.5,-1.5)   (0.5,-1.5) -- (1,-1)   (-1,-1) -- (-1,-2.5)   (1.5,-1.5) -- (1.5,-2.5)   (0.5,-1.5) -- (0.5,-2.5);
  \node at (-1,-2) {\huge $\circlearrowright$}; \node at (0.5,-2) {\huge $\circlearrowleft$}; \node at (1.5,-2) {\huge $\circlearrowright$};

 \begin{scope}[xshift=4cm]
  \draw (0,1) -- (0,0)   (-1,-1) -- (0,0) -- (1.5,-1.5)   (0.5,-1.5) -- (1,-1)   (-1,-1) -- (-1,-2.5)   (1.5,-1.5) -- (1.5,-2.5)   (0.5,-1.5) -- (0.5,-2.5);
 \node at (0,0.5) {\huge $\circlearrowleft$}; \node at (2,-0.5) {$=$};
 \begin{scope}[xshift=4cm,xscale=-1]
  \draw (0,1) -- (0,0)   (-1,-1) -- (0,0) -- (1.5,-1.5)   (0.5,-1.5) -- (1,-1)   (-1,-1) -- (-1,-2.5)   (1.5,-1.5) -- (1.5,-2.5)   (0.5,-1.5) -- (0.5,-2.5);
	\node at (-1,-2) {\huge $\circlearrowleft$}; \node at (1.5,-2) {\huge $\circlearrowleft$}; \node at (0.5,-2) {\huge $\circlearrowright$};
 \end{scope}
\end{scope}
\end{scope}
\end{tikzpicture}
\caption{Four expansion moves in strand diagrams for Lodha--Moore groups.}\label{fig:LM_strand_moves}
\end{figure}

The point of using these cyclones is that the direction of spinning tells us whether to ``push'' $\{0,10,11\}$ to $\{00,01,1\}$ or vice versa. Our convention of reading the diagrams bottom-to-top ensures that positive powers of $y_s$ generators correspond to counterclockwise (positive) cyclones.

To demonstrate that our conventions for strand diagrams correctly model the groups, we draw some examples of the defining relations in Figures~\ref{fig:pent},~\ref{fig:y_conj} and~\ref{fig:expand}.

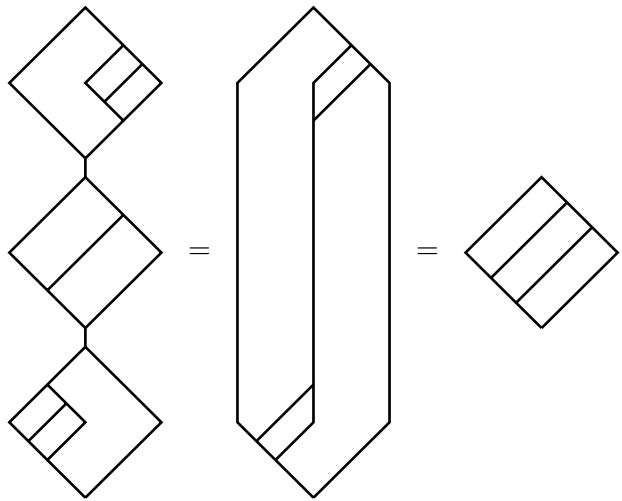
\begin{figure}[htb]
\begin{tikzpicture}[line width=1pt,yscale=-1]
 \draw (0,0) -- (-1,-1) -- (0,-2) -- (1,-1) -- (0,0)   (-0.5,-0.5) -- (0,-1) -- (-0.5,-1.5)   (-0.75,-0.75) -- (-0.25,-1.25);
 \draw (0,-2) -- (0,-2.25);
 \draw (0,-2.25) -- (-1,-3.25) -- (0,-4.25) -- (1,-3.25) -- (0,-2.25)   (-0.5,-2.75) -- (0.5,-3.75);
 \draw (0,-4.25) -- (0,-4.5);
 \draw (0,-4.5) -- (-1,-5.5) -- (0,-6.5) -- (1,-5.5) -- (0,-4.5)   (0.5,-5) -- (0,-5.5) -- (0.5,-6)   (0.25,-5.25) -- (0.75,-5.75);
 \node at (1.5,-3.25) {$=$};
 \draw (3,0) -- (2,-1) -- (2,-5.5) -- (3,-6.5) -- (4,-5.5) -- (4,-1) -- (3,0)   (2.5,-0.5) -- (3,-1) -- (3,-5.5) -- (3.5,-6);
 \draw (2.25,-0.75) -- (3,-1.5)   (3.75,-5.75) -- (3,-5);
 \node at (4.5,-3.25) {$=$};
 \draw (6,-2.25) -- (5,-3.25) -- (6,-4.25) -- (7,-3.25) -- (6,-2.25)   (5.33,-2.91) -- (6.33,-3.91)   (5.66,-2.58) -- (6.66,-3.58);
\end{tikzpicture}
\caption{The relation $x_0 x_\emptyset x_1 = x_\emptyset^2$.}\label{fig:pent}
\end{figure}

\begin{figure}[htb]
\begin{tikzpicture}[line width=1pt]
 \draw (0,2.25) -- (-1,1.25) -- (0,0.25) -- (1,1.25) -- (0,2.25)   (0.5,1.75) -- (-0.5,0.75);
 \draw (0,0) -- (0,0.25);
 \draw (0,0) -- (-1,-1) -- (0,-2) -- (1,-1) -- (0,0)   (0.5,-0.5) -- (0,-1) -- (0.5,-1.5);
 \node at (1,-1) {\huge $\circlearrowleft$};
 \draw (0,-2) -- (0,-2.25);
 \draw (0,-2.25) -- (-1,-3.25) -- (0,-4.25) -- (1,-3.25) -- (0,-2.25)   (-0.5,-2.75) -- (0.5,-3.75);
 \node at (1.5,-2.125) {$=$};
 \draw (3,-1.125) -- (2,-2.125) -- (3,-3.125) -- (4,-2.125) -- (3,-1.125)   (3.5,-1.625) -- (3,-2.125) -- (3.5,-2.625);
 \node at (4,-2.125) {\huge $\circlearrowleft$};
 \draw (3+0.33,-1.125-0.33) -- (3-0.33,-2.125) -- (3+0.33,-3.125+0.33);
\end{tikzpicture}
\caption{The relation $x_\emptyset^{-1} y_1 x_\emptyset=y_{11}$.}\label{fig:y_conj}
\end{figure}

\begin{figure}[htb]
\begin{tikzpicture}[line width=1pt,yscale=-1]
 \draw (0,0) -- (-1,-1) -- (0,-2) -- (1,-1) -- (0,0)   (0.5,-0.5) -- (0,-1) -- (-0.5,-1.5);
 \draw (0,-2) -- (0,-3);
 \node at (0,-2.5) {\huge $\circlearrowleft$};
 \node at (1.5,-1.5) {$=$};
 \draw (2,-1) -- (3,0) -- (4,-1)   (3.5,-0.5) -- (3,-1);
 \draw (2,-1) -- (2,-2)   (3,-1) -- (3,-2)   (4,-1) -- (4,-2);
 \draw (2,-2) -- (3,-3) -- (4,-2)   (3.5,-2.5) -- (3,-2);
 \node at (2,-1.5) {\huge $\circlearrowleft$}; \node at (3,-1.5) {\huge $\circlearrowright$}; \node at (4,-1.5) {\huge $\circlearrowleft$};
\end{tikzpicture}
\caption{The relation $x_\emptyset^{-1} y_\emptyset = y_0 y_{10}^{-1} y_{11}$.}\label{fig:expand}
\end{figure}

\subsection{Abelianizations and characters}\label{sec:chars}

In this subsection we will abelianize the four Lodha--Moore groups; it turns out all of them abelianize to $\Z^3$. We will use bars to indicate the abelianized elements, and will write $H^{ab}$ for the abelianization of a group $H$.

\begin{lemma}\label{lem:abln_gens}
 $\LMsmall^{ab}$ is generated by $\overline{x}_0$, $\overline{x}_1$ and $\overline{y}_{10}$. $\LMleft^{ab}$ is generated by $\overline{y}_0$, $\overline{x}_1$ and $\overline{y}_{10}$. $\LMright^{ab}$ is generated by $\overline{x}_0$, $\overline{y}_1$ and $\overline{y}_{10}$. $\LMbig^{ab}$ is generated by $\overline{y}_0$, $\overline{y}_1$ and $\overline{y}_{10}$.
\end{lemma}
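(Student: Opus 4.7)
The plan is to abelianize each group by pushing the defining relations (LM1)--(LM5) into the additive setting and showing that, in every case, three generators suffice. The overall strategy splits into handling the $x$-generators, handling the $y$-generators, and then linking them via (LM5) in the groups that are large enough to carry that relation.

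First I would deal with the $x$-part. Relations (LM1) and (LM2) restricted to the $x$-subgroup $F$ give $\overline{x}_s = \overline{x}_{s0} + \overline{x}_{s1}$ and $\overline{x}_t = \overline{x}_{t.x_s}$, which are exactly the relations computing $F^{ab}\cong\Z^2$. The standard consequence is $\overline{x}_{0^n}=\overline{x}_0$, $\overline{x}_{1^n}=\overline{x}_1$, and $\overline{x}_s=0$ whenever $s$ contains both a $0$ and a $1$. So the $x$-generators of each Lodha--Moore group contribute at most $\overline{x}_0,\overline{x}_1$. Next I would treat the $y$-part using (LM3), which gives $\overline{y}_t=\overline{y}_{t.x_s}$ whenever $t.x_s$ is defined. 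From this one reads off $\overline{y}_{0^n}=\overline{y}_0$ and $\overline{y}_{1^n}=\overline{y}_1$ (when those generators exist) by iterating the identifications $00.x_\emptyset=0$ and $1.x_\emptyset=11$ at appropriate addresses.

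The main step (and the main obstacle) is to show that for every non-pure $t\in 2^{<\N}$ (i.e.\ $t\notin\{0^n,1^n\}$), $\overline{y}_t=\overline{y}_{10}$. The key observation that makes this work \emph{inside $\LMsmall$} is that $x_s$ preserves non-pureness: whenever $t$ is non-pure and $t.x_s$ is defined, $t.x_s$ is again non-pure. So the (LM3)-chain may be carried out entirely among allowed $y$-generators. The argument itself is a combinatorial induction on $|t|$: if $t$ begins with $00$, apply $x_\emptyset$ to strictly shorten; otherwise use lateral moves such as $01.x_\emptyset=10$, or the pair $101\eta'.x_\emptyset=110\eta'$ followed by the backward identification $110\eta'.x_\emptyset^{-1}=10\eta'$, to maneuver into a position where shortening applies. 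Iterating brings any non-pure $t$ down to $10$, giving $\overline{y}_t=\overline{y}_{10}$.

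Finally I would use (LM5) to kill the relevant $\overline{x}_i$'s in the larger groups. In $\LMleft$ and $\LMbig$, the relation $\overline{y}_0=\overline{x}_0+\overline{y}_{00}-\overline{y}_{010}+\overline{y}_{011}$ combined with $\overline{y}_{00}=\overline{y}_0$ and $\overline{y}_{010}=\overline{y}_{011}=\overline{y}_{10}$ (from the previous step) forces $\overline{x}_0=0$; symmetrically, (LM5) at $s=1$ forces $\overline{x}_1=0$ in $\LMright$ and $\LMbig$, using $\overline{y}_{110}=\overline{y}_{10}$ and $\overline{y}_{111}=\overline{y}_1$. Assembling the pieces gives the four generating sets stated in the lemma: $\overline{x}_0,\overline{x}_1$ survive exactly in $\LMsmall$; in each of $\LMleft,\LMright,\LMbig$ the available $y_{0^n}$ and/or $y_{1^n}$ replaces the corresponding $\overline{x}$-generator, leaving $\overline{y}_{10}$ as the third generator throughout. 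The bulk of the work, and the only non-routine point, is the collapse of all non-pure $\overline{y}_t$ to $\overline{y}_{10}$ using only (LM3) among non-pure addresses.
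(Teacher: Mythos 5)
Your proposal is correct and follows essentially the same route as the paper: reduce the $\overline{x}_s$ and $\overline{y}_t$ via the orbit structure of the partial $F$-action on $2^{<\N}$ (the paper simply cites that there are four orbits, represented by $\emptyset,0,1,10$, where you spell out that the action preserves ``non-pureness'' so the (LM3)-chains stay among allowed generators), and then apply (LM5) at $s=0$ and/or $s=1$ to kill $\overline{x}_0$ and/or $\overline{x}_1$ in $\LMleft$, $\LMright$, $\LMbig$. The only quibble is the illustrative move ``$101\eta'.x_\emptyset=110\eta'$'', which should read $101\eta'.x_1=110\eta'$ (since $101\eta'.x_\emptyset=1101\eta'$), but this does not affect the argument.
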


\begin{proof}
 We have \emph{a priori} that $\LMsmall^{ab}$ is generated by the $\overline{x}_s$ for all $s$ and the $\overline{y}_s$ for all $s\not\in\{0^n,1^n\}_{n\in\N_0}$. Note that the partial action of $F$ on $2^{<\N}$ has four orbits, represented by $\emptyset$, $0$, $1$ and $10$. By relation (LM3), every $\overline{y}_s$ equals $\overline{y}_{10}$. By relation (LM2), every $\overline{x}_s$ equals one of $\overline{x}_\emptyset$, $\overline{x}_0$, $\overline{x}_1$ or $\overline{x}_{10}$. Also, $\overline{x}_{10}=0$ and $\overline{x}_\emptyset=\overline{x}_0+\overline{x}_1$ since this is true in $F$.

 Now consider $\LMleft^{ab}$. Similar to before we reduce the generating set down to just $\overline{x}_0$, $\overline{x}_1$, $\overline{y}_0$ and $\overline{y}_{10}$. Then we use relation (LM5) to get
$$\overline{y}_0=\overline{x}_0 + \overline{y}_{00} - \overline{y}_{010} + \overline{y}_{011} = \overline{x}_0 + \overline{y}_{00} = \overline{x}_0 + \overline{y}_0$$
 whence $\overline{x}_0=0$. This finishes $\LMleft$. For $\LMright$ and $\LMbig$ we use parallel arguments.
\end{proof}

Now to see that these abelianizations are all $\Z^3$, we need to show that in each case the three generators are linearly independent. For each group it suffices to exhibit, for each of its three generators, an epimorphism from the group to $\Z$, i.e., a \emph{discrete character}, that takes that generator to $1$ and the other two generators to $0$. We define such characters now. After giving them names we will discuss which ones are well defined on which groups, and which generators they ``detect'' for the purpose to proving linear independence.

The characters we now define are denoted
$$\chi_0 \text{, } \chi_1 \text{, } \psi_0 \text{, } \psi_1 \text{ and } \psi \text{.}$$

First define $\chi_0$ by setting $\chi_0(x_{0^n})=-1$ for $n\ge0$, $\chi_0(x_s)=0$ for $s\neq 0^n$ and $\chi_0(y_s)=0$ for all $s$. Restricted to $F$, this is the character typically called $\chi_0$. Next define $\chi_1$ via $\chi_1(x_{1^n})=1$ for $n\ge0$, $\chi_1(x_s)=0$ for $s\neq 1^n$ and $\chi_1(y_s)=0$ for all $s$. Again, this is the usual character called $\chi_1$ when restricted to $F$.

Next define $\psi_0$ via $\psi_0(y_{0^n})=1$ for $n\ge0$, $\psi_0(y_s)=0$ for $s\neq 0^n$ and $\psi_0(x_s)=0$ for all $s$. Similarly define $\psi_1$ via $\psi_1(y_{1^n})=1$ for $n\ge0$, $\psi_1(y_s)=0$ for $s\neq 1^n$ and $\psi_1(x_s)=0$ for all $s$. Lastly define $\psi$ via $\psi(y_s)=1$ for all $s$ and $\psi(x_s)=0$ for all $s$.

For example, if $w=x_0^2 y_{000}^{-1} y_{11}^4$ then the five functions $\chi_0$, $\chi_1$, $\psi_0$, $\psi_1$ and $\psi$ read $-2$, $0$, $-1$, $4$ and $3$ respectively. (Though, as the next observation points out, we shouldn't try to apply $\chi_0$ or $\chi_1$ to this $w$ as an element of $\LMbig$, since it will not be well defined.)

\begin{observation}
 The characters $\chi_0,\chi_1,\psi$ are well defined on $\LMsmall$. The characters $\chi_0,\psi_1,\psi$ are well defined on $\LMright$. The characters $\psi_0,\chi_1,\psi$ are well defined on $\LMleft$. The characters $\psi_0,\psi_1,\psi$ are well defined on $\LMbig$.
\end{observation}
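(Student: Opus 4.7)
The plan is to verify, for each claimed character--group pair in the observation, that the character sends every defining relation (LM1)--(LM5) of the group to zero; since each of the four groups is given by the explicit presentation listed above (with its restriction on which $y_t$'s are generators, and hence on which subscripts actually index a relation (LM3)--(LM5)), this is sufficient. The only relation mixing $x$'s and $y$'s nontrivially is (LM5), so that will be the heart of the check; the other four relations fall out almost immediately.

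Relations (LM1) and (LM2) involve only $x$-generators, so they are trivial for the three $\psi$-type characters, and for $\chi_0, \chi_1$ they hold because on $F$ these agree with the standard Brown--Thompson characters (a one-line direct check of (LM1) splitting on whether $s=0^n$ or $s=1^n$ also confirms this). Relation (LM4) abelianizes to a trivial identity regardless of the character. For (LM3) one must check $\chi(y_t)=\chi(y_{t.x_s})$; this is clear for $\chi_0, \chi_1$ (both sides zero) and for $\psi$ (both sides one), and for $\psi_0, \psi_1$ it reduces to a short check that the partial action of $x_s$ on $2^{<\N}$ preserves the property of being $0^n$, respectively $1^n$: the defining formula for $\xi.x$ shows that if $t=0^n$ and $t.x_s$ is defined then $s=0^k$ with $k\le n-2$ and $t.x_s=0^{n-1}$, with a symmetric statement for $1^n$.

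The main work is (LM5), which reads $y_s=x_s y_{s0} y_{s10}^{-1} y_{s11}$. For $\psi$ it gives $1=0+1-1+1$ and is always respected. For $\chi_i$ it reduces to $\chi_i(x_s)=0$ for every $s$ such that $y_s, y_{s0}, y_{s10}, y_{s11}$ are simultaneously generators of the group in question; the key observation is that $s0$ is of the form $0^m$ only when $s$ already is, and $s11$ is of the form $1^m$ only when $s$ already is, which forces $s\neq 0^n$ (resp.\ $s\neq 1^n$) whenever (LM5) is a relation of $\LMsmall$ or $\LMright$ (resp.\ $\LMsmall$ or $\LMleft$), exactly as required. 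For $\psi_0$ on $\LMleft$ or $\LMbig$ the check splits on $s$: if $s=0^n$ then LHS $=1$ and RHS $=\psi_0(y_{s0})=1$ since $s10$ and $s11$ each contain a $1$; otherwise $s0, s10, s11$ are none of them of the form $0^m$, so both sides vanish. The $\psi_1$ case is symmetric. The only real obstacle is the bookkeeping of which $s$ index a relation (LM5) in each group, but once this is set up each individual case collapses to a one-line arithmetic check.
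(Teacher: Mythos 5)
Your proposal is correct and takes essentially the same approach as the paper, which simply observes that one must check that (LM1)--(LM5) (with only the subscripts admissible for the given group) are respected by each character and calls this a straightforward exercise. You have carried out that exercise in more detail than the paper does, and the case analysis (in particular the treatment of (LM5) and of the partial action on strings $0^n$, $1^n$ for (LM3)) is sound.
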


\begin{proof}
 We just need to verify that the relations (LM1) through (LM5) remain valid after applying any of these characters, and this is a straightforward exercise. For the reader interested in checking this, we reiterate that we only only need to check the relations with $y$-subscripts allowed in whichever Lodha--Moore group we are working with. For example, relation (LM5) for $s=\emptyset$ is $y_\emptyset = x_\emptyset y_0 y_{10}^{-1} y_{11}$, and applying $\chi_0$ we get $0=-1$; hence $\chi_0$ is not well defined on $\LMbig$. In $\LMsmall$ though, this relation does not appear, and in fact $\chi_0$ is well defined on $\LMsmall$.
\end{proof}

Characters provide a way to test linear independence of abelianized elements. For example, in $F$, if $a\overline{x}_0 + b\overline{x}_1 = 0$ then applying $\chi_0$ we see $-a+0=0$ and applying $\chi_1$ we see $0+b=0$, so in fact $a=b=0$ and we conclude that $\overline{x}_0$ and $\overline{x}_1$ are linearly independent. This shows that $F^{ab}\cong\Z^2$, and we have the following corollary for the Lodha--Moore groups:

\begin{corollary}
 The abelianizations of the Lodha--Moore groups are all isomorphic to $\Z^3$.
\end{corollary}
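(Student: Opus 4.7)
The plan is to upgrade Lemma~\ref{lem:abln_gens}, which produces a three-element generating set for each $H^{ab}$, to an isomorphism with $\Z^3$ by showing that in each case these three generators are $\Z$-linearly independent. The tool will be the three discrete characters listed in the Observation, together with the same ``test a relation on each available character'' trick that was just used for $F$ in the paragraph preceding the corollary.

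First I would organize the data: for each of the four groups $H\in\{\LMsmall,\LMright,\LMleft,\LMbig\}$, form the $3\times 3$ integer matrix $M_H$ whose $(i,j)$-entry is the value of the $i$-th character well defined on $H$ (in the order listed in the Observation) on the $j$-th generator given by Lemma~\ref{lem:abln_gens}. All twelve entries are read off immediately from the definitions of $\chi_0,\chi_1,\psi_0,\psi_1,\psi$. For instance,
\[
M_{\LMsmall}=\begin{pmatrix}-1&0&0\\0&1&0\\0&0&1\end{pmatrix},\qquad M_{\LMbig}=\begin{pmatrix}1&0&0\\0&1&0\\1&1&1\end{pmatrix},
\]
with $M_{\LMright}$ and $M_{\LMleft}$ of the same shape: each is triangular (after a permutation of rows) with diagonal entries in $\{\pm 1\}$ and at most one nonzero off-diagonal entry.

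Now, given a hypothetical relation $a\overline{g}_1+b\overline{g}_2+c\overline{g}_3=0$ in $H^{ab}$, applying the three characters in turn yields the linear system $M_H(a,b,c)^T=0$. Since $\det M_H=\pm 1$ in each of the four cases, the only solution is $a=b=c=0$, so the three generators are $\Z$-linearly independent. Combined with the surjection $\Z^3\onto H^{ab}$ supplied by Lemma~\ref{lem:abln_gens}, this gives $H^{ab}\cong\Z^3$ for all four Lodha--Moore groups.

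There is no substantive obstacle; this is really a bookkeeping argument. The only thing to be vigilant about is to use, for each group, \emph{only} those characters the Observation declares well defined there (e.g.\ $\chi_0$ and $\chi_1$ must be excluded from $\LMbig$, and correspondingly do not appear as rows of $M_{\LMbig}$). Once the four matrices are written down, each determinant computation is a one-line check.
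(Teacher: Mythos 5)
Your proposal is correct and follows the paper's own argument: for each group you apply exactly the three characters declared well defined there to the three generators from Lemma~\ref{lem:abln_gens} and conclude linear independence. Packaging the character values into a unimodular $3\times 3$ matrix is just a tidier way of writing the same check.
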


\begin{proof}
 We know from Lemma~\ref{lem:abln_gens} that $\LMsmall^{ab}$ is generated by $\overline{x}_0$, $\overline{x}_1$ and $\overline{y}_{10}$. We can tell these are linearly independent by applying the functions $\chi_0$, $\chi_1$ and $\psi$. For $\LMright^{ab}$ with generators $\overline{x}_0$, $\overline{y}_1$ and $\overline{y}_{10}$ we use $\chi_0$, $\psi_1$ and $\psi$. For $\LMleft^{ab}$ with generators $\overline{y}_0$, $\overline{x}_1$ and $\overline{y}_{10}$ we use $\psi_0$, $\chi_1$ and $\psi$. Finally, for $\LMbig^{ab}$ with generators $\overline{y}_0$, $\overline{y}_1$ and $\overline{y}_{10}$ we use $\psi_0$, $\psi_1$ and $\psi$.
\end{proof}

\section{HNN decompositions}\label{sec:HNN}

In this section we decompose the Lodha--Moore groups into ascending HNN extensions of each other. The eight HNN decompositions we find, (HNN1) through (HNN8), are listed at the end of the section for reference. First we give some general background, and then we will inspect the Lodha--Moore groups in two subsections.

\begin{definition}[(External) ascending HNN extension]\label{def:ext_hnn}
 Let $B$ be a group and $\phi\colon B\hookrightarrow B$ an injective endomorphism. The \emph{ascending HNN extension} of $B$ with respect to $\phi$ is the group
$$B*_{\phi,t}=\langle B,t\mid b^t=\phi(b) \text{ for all } b\in B\rangle \text{,}$$
where $b^t$ means $t^{-1}bt$. We call $t$ the \emph{stable element} of $B*_{\phi,t}$. If $\phi$ is not surjective, we call this a \emph{strictly ascending HNN extension}.
\end{definition}

Since we will be asking whether pre-existing groups have the form of ascending HNN extensions, we will take the following lemma as an alternate definition. (See also \cite[Lemma~3.1]{geoghegan01}.)

\begin{lemma}[(Internal) ascending HNN extension]\label{lem:int_hnn}
 Let $H$ be a group, $B\le H$ and $z\in H$. Suppose that $B$ and $z$ generate $H$, and that $B^z\subseteq B$. Suppose $z^n\in B$ only if $n=0$ (if $B^z\subsetneq B$ this is automatically satisfied). Let $\phi\colon B\hookrightarrow B$ be the map $b\mapsto b^z$. Then $H\cong B*_{\phi,t}$.
\end{lemma}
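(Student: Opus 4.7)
The plan is to apply the universal property of the HNN extension to build a surjection $\Phi\colon B*_{\phi,t}\onto H$, and then to establish injectivity by showing that both $B*_{\phi,t}$ and $H$ decompose as semidirect products of an ascending union of conjugates of $B$ by an infinite cyclic group, intertwined by $\Phi$.

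First I would note that the inclusion $B\hookrightarrow H$ together with $t\mapsto z$ respects the defining HNN relations $b^t=\phi(b)$, since in $H$ one has $b^z=\phi(b)$ by the very definition of $\phi$; the universal property of $B*_{\phi,t}$ then delivers $\Phi$, and surjectivity is immediate from the hypothesis that $B\cup\{z\}$ generates $H$.

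For injectivity I would introduce
\[
\widetilde{B}_{\mathrm{abs}} \defeq \bigcup_{n\ge 0} t^n B t^{-n} \le B*_{\phi,t} \qquad \text{and} \qquad \widetilde{B} \defeq \bigcup_{n\ge 0} z^n B z^{-n} \le H.
\]
On the abstract side, the relation $t^{-1}Bt=\phi(B)\subseteq B$ makes $\widetilde{B}_{\mathrm{abs}}$ a nested ascending union, hence a subgroup; the rewrites $bt=t\phi(b)$ and $t^{-1}b=\phi(b)t^{-1}$ bring any word in $B\cup\{t^{\pm 1}\}$ to the shape $t^Mct^{-N}=(t^Mct^{-M})\cdot t^{M-N}$, showing $B*_{\phi,t}=\widetilde{B}_{\mathrm{abs}}\cdot\langle t\rangle$. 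The character $B*_{\phi,t}\to\Z$ with $t\mapsto 1,\ B\mapsto 0$ is well-defined (the defining relations have matching $t$-exponents on both sides), vanishes on $\widetilde{B}_{\mathrm{abs}}$, and is injective on $\langle t\rangle$, forcing $\widetilde{B}_{\mathrm{abs}}\cap\langle t\rangle=\{1\}$ and hence $B*_{\phi,t}=\widetilde{B}_{\mathrm{abs}}\rtimes\langle t\rangle$. The same pattern inside $H$ uses $B^z\subseteq B$ to make $\widetilde{B}$ an ascending union (a subgroup normalized by $z$), while the hypothesis $z^n\in B\Rightarrow n=0$ immediately yields $\widetilde{B}\cap\langle z\rangle=\{1\}$, producing $H=\widetilde{B}\rtimes\langle z\rangle$. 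Finally $\Phi$ sends $t^nBt^{-n}$ bijectively onto $z^nBz^{-n}$ for every $n$, with the nesting maps on both sides being ``apply $\phi$,'' so it restricts to an isomorphism $\widetilde{B}_{\mathrm{abs}}\to\widetilde{B}$ and induces an isomorphism $\langle t\rangle\to\langle z\rangle$, giving the desired isomorphism of semidirect products.

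The one place that requires care is the normal-form claim on the abstract HNN side --- i.e., verifying that every word truly reduces to the shape $t^Mct^{-N}$ and that the resulting product decomposition is a semidirect product. Rather than grinding through a length induction by hand, I would simply invoke the standard normal form for ascending HNN extensions (or the alternative formulation already cited in the excerpt as \cite[Lemma~3.1]{geoghegan01}), after which the remaining verifications are pure bookkeeping.
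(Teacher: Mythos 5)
Your proof is correct and takes essentially the same route as the paper: the same surjection $\Phi\colon B*_{\phi,t}\onto H$ from the universal property, with injectivity ultimately resting on the normal form $t^M c t^{-N}$ together with the hypothesis that $z^n\in B$ forces $n=0$. The paper just runs the injectivity step more directly---an element of $\ker\Phi$ in normal form gives $z^M c z^{-N}=1$, hence $c=z^{N-M}\in B$, hence $c=1$---whereas you package the same computation as matching semidirect-product decompositions $\widetilde{B}_{\mathrm{abs}}\rtimes\langle t\rangle\to\widetilde{B}\rtimes\langle z\rangle$; both are fine.
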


\begin{proof}
 Define an epimorphism $\Phi \colon B*_{\phi,t} \onto H$ by sending $t$ to $z$ and $b$ to $b$ for $b\in B$. This is a well defined homomorphism since $\Phi(b^t)=b^z=\phi(b)=\Phi(\phi(b))$ for all $b\in B$. We just need it to be injective. Elements of the abstract HNN extension $B*_{\phi,t}$ admit the form $t^n b t^m$ for $n\ge 0$, $b\in B$ and $m\le 0$. Suppose such an element lies in $\ker(\Phi)$. Then $z^n b z^m=1$, so $b=z^{-n-m}$. Hence $b=1$, and so indeed $\ker(\Phi)=\{1\}$.
\end{proof}

We will sometimes suppress the $\phi$ from the notation and just write $B*_t$, since when $B$ and $t$ live in an ambient group, $\phi$ is just conjugation by $t$. We call $H=B*_t$ an \emph{HNN decomposition} of $H$.

As we will see, the Lodha--Moore groups all decompose into interesting strictly ascending HNN extensions. Some of the decompositions are reminiscent of the case of Thompson's group $F$, but others are quite different.

\subsection{$F$-like HNN decompositions}\label{sec:Flike_HNN}

First we discuss the ``$F$-like'' decompositions. For $H\in\{\LMsmall,\LMleft,\LMright,\LMbig\}$ and $s\in 2^{<\N}$, define $H(s)$ to 
be the subgroup of $H$ generated by those $x_t$ and $y_t$ where $t$ extends $s$. Note that $\LMsmall(0)\cong \LMright$, via $x_{0t}\mapsto x_t$ and $y_{0t}\mapsto y_t$. (The important thing to note is that $y_{01^n}\mapsto y_{1^n}$, so $\LMright$ is the correct target, not $\LMsmall$.) More generally, such arguments reveal the following relationships:

\begin{itemize}
 \item $\LMsmall(0)\cong\LMright$
 \item $\LMsmall(1)\cong\LMleft$
 \item $\LMright(0)\cong\LMright$
 \item $\LMright(1)\cong\LMbig$
 \item $\LMleft(0)\cong\LMbig$
 \item $\LMleft(1)\cong\LMleft$
 \item $\LMbig(0)\cong\LMbig$
 \item $\LMbig(1)\cong\LMbig$
\end{itemize}

We will call $H(s)$ the \emph{semi-deferred} subgroup of $H$ at $s$. The prefix ``semi-'' is a reminder that $H(s)$ might not be isomorphic to $H$, in contrast to the case of $F$, when $F(s)\cong F$ is appropriately called the deferred copy of $F$ at $s$.

Recall that $F$ is a strictly ascending HNN extension of $F(1)\cong F$ with stable element $x_\emptyset$; it is also a strictly ascending HNN extension of $F(0)\cong F$ with stable element $x_\emptyset^{-1}$. We have some similar statements about the Lodha--Moore groups. We will prove the first such fact, and then list the other ones, which follow by parallel proofs, in a corollary.

\begin{lemma}\label{lem:F_like_hnn}
 We have that $\LMsmall$ is a strictly ascending HNN extension of $\LMsmall(1)$ with stable element $x_\emptyset$.
\end{lemma}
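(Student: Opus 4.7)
The plan is to apply Lemma~\ref{lem:int_hnn} with $B\defeq\LMsmall(1)$ and $z\defeq x_\emptyset$, so I need to verify three things: that $\LMsmall(1)$ and $x_\emptyset$ together generate $\LMsmall$, that $\LMsmall(1)^{x_\emptyset}\subseteq \LMsmall(1)$, and that this containment is strict (which then makes the auxiliary condition on powers of $z$ automatic).

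The containment $\LMsmall(1)^{x_\emptyset}\subseteq \LMsmall(1)$ is immediate from the relations: a generator $x_t$ or $y_t$ of $\LMsmall(1)$ has address of the form $t=1t'$, and (LM2), (LM3) give $x_t^{x_\emptyset}=x_{t.x_\emptyset}=x_{11t'}$ and $y_t^{x_\emptyset}=y_{11t'}$. Since $11t'$ is not of the form $0^n$ or $1^n$ when $t$ was not, these images lie in $\LMsmall(1)$; in fact the image is exactly $\LMsmall(11)$. Strictness then follows because every element of $\LMsmall(11)$ fixes every $\xi\in 2^\N$ not extending $11$, while the generator $x_1\in \LMsmall(1)$ moves points of the $10$-cone (for instance $100\eta\mapsto 10\eta$), so $x_1\notin \LMsmall(1)^{x_\emptyset}$.

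For generation, a generator of $\LMsmall$ whose address extends $1$ lies in $\LMsmall(1)$ automatically; the remaining generators have addresses starting with $0$. If $s$ starts with $0$ and is not of the form $0^n$, then $s=0^k 1 t'$ with $k\geq 1$, and iterating the partial action of $x_\emptyset$ gives $s.x_\emptyset^k = 10 t'$, which extends $1$ and is neither $0^n$ nor $1^n$; iterated (LM2) and (LM3) then yield $x_s = x_\emptyset^k x_{10 t'} x_\emptyset^{-k}$ and $y_s = x_\emptyset^k y_{10 t'} x_\emptyset^{-k}$, both in $\langle x_\emptyset,\LMsmall(1)\rangle$. The last case is the family $x_{0^n}$, which I would handle by induction on $n$ using (LM1): the base case $x_0 = x_\emptyset^2 x_1^{-1} x_\emptyset^{-1}$ comes directly from (LM1) at $s=\emptyset$, and the inductive step expresses $x_{0^{n+1}} = x_{0^n}^2 x_{0^n 1}^{-1} x_{0^n}^{-1}$ using the inductive hypothesis together with the previous case applied to $x_{0^n 1}$. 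The main obstacle I anticipate is precisely this $x_{0^n}$ subcase: iterated $x_\emptyset$-conjugation only shortens addresses of the form $0^n$ and never moves them into the $1$-cone, so the tree relation (LM1) is genuinely required to reach these generators.
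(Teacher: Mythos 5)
Your proposal is correct and follows essentially the same route as the paper: check that $\LMsmall(1)$ and $x_\emptyset$ generate, using iterated conjugation by $x_\emptyset$ for addresses $0^k1t'$ and relation (LM1) for the family $x_{0^n}$, and then observe $\LMsmall(1)^{x_\emptyset}\subseteq\LMsmall(11)\subsetneq\LMsmall(1)$. The only cosmetic differences are that the paper handles $x_{0^n}$ by conjugating it directly down to $x_0=x_\emptyset^2 x_1^{-1}x_\emptyset^{-1}$ rather than by induction, and that you spell out the strictness argument (via the support of elements of $\LMsmall(11)$) which the paper leaves implicit.
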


\begin{proof}
 First note that $\LMsmall$ is generated by $\LMsmall(1)$ and $x_\emptyset$. Indeed, the only generators missing are of the form $x_{0u}$ and $y_{0u}$; if $u$ features at least one $1$, then $x_{0u}$, respectively $y_{0u}$, is conjugate via $x_\emptyset$ to a generator of the form $x_{1t} \in \LMsmall(1)$, respectively $y_{1t} \in \LMsmall(1)$. Also, any $x_{0^n}$ for $n\ge1$ is conjugate via $x_\emptyset$ to $x_0 = x_\emptyset^2 x_1^{-1} x_\emptyset^{-1}$. So indeed we catch all the generators. Lastly note that $\LMsmall(1)^{x_\emptyset} \subseteq \LMsmall(11)$ since $x_{1t}^{x_\emptyset}=x_{11t}$ and $y_{1t}^{x_\emptyset}=y_{11t}$, so $\LMsmall(1)^{x_\emptyset} \subsetneq \LMsmall(1)$.
\end{proof}

\begin{corollary}\label{cor:F_like_hnns}
 By parallel proofs to that of the lemma, we have that $\LMsmall$ is a strictly ascending HNN extension of $\LMsmall(0)$ with stable element $x_\emptyset^{-1}$. Moreover, $\LMright$ (respectively $\LMleft$) is a strictly ascending HNN extension of $\LMright(1)$ (respectively $\LMleft(0)$) with stable element $x_\emptyset$ (respectively $x_\emptyset^{-1}$).
\end{corollary}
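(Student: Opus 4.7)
My plan is to apply Lemma~\ref{lem:int_hnn} to each of the three asserted HNN decompositions, in each case following the template set by the proof of Lemma~\ref{lem:F_like_hnn}. Recall that the lemma requires me to verify (i) that the proposed base subgroup $B$ together with the proposed stable element $z$ generate the ambient group, and (ii) that $B^z \subsetneq B$.

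For the first claim, $\LMsmall \cong \LMsmall(0) *_{x_\emptyset^{-1}}$, I would mirror the proof of Lemma~\ref{lem:F_like_hnn} under the $0 \leftrightarrow 1$ symmetry, swapping $x_\emptyset$ for $x_\emptyset^{-1}$. The generators of $\LMsmall$ missing from $\LMsmall(0)$ are the $x_{1u}$ and $y_{1u}$ (for allowed $u$). If $u$ contains a $0$, conjugating by appropriate powers of $x_\emptyset$ transports the generator to one whose subscript begins with $0$; concretely, iterated application of $(\cdot).x_\emptyset^{-1}$ turns $1^{k+1}0u''$ first into $10u''$ and then into $01u''$, yielding a generator in $\LMsmall(0)$. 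The remaining generators $x_{1^n}$ conjugate down to $x_1$, and by (LM1) $x_1 = x_\emptyset^{-1} x_0^{-1} x_\emptyset^2$ with $x_0 \in \LMsmall(0)$, so $x_1 \in \langle \LMsmall(0), x_\emptyset \rangle$. For (ii), since $(0u').x_\emptyset^{-1} = 00u'$ on subscripts, I get $\LMsmall(0)^{x_\emptyset^{-1}} \subseteq \LMsmall(00) \subsetneq \LMsmall(0)$, the containment being strict because, e.g., $x_{01} \in \LMsmall(0) \setminus \LMsmall(00)$.

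For the remaining two claims, $\LMright \cong \LMright(1) *_{x_\emptyset}$ and $\LMleft \cong \LMleft(0) *_{x_\emptyset^{-1}}$, the arguments are verbatim those of Lemma~\ref{lem:F_like_hnn} and of the first claim, respectively, carried out in the appropriate ambient group. The only additional thing to verify is that every $y$-subscript invoked during the parallel argument is legal in the group at hand. For $\LMright$, whose forbidden $y$-subscripts are $\{0^n\}_{n \in \N_0}$, the argument only conjugates $y_{0u}$ with $u$ containing a $1$, so $0u \neq 0^n$. For $\LMleft$, whose forbidden $y$-subscripts are $\{1^n\}_{n \in \N_0}$, the mirrored argument only touches $y_{1u}$ with $u$ containing a $0$, so $1u \neq 1^n$.

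The main (though mild) obstacle is precisely this compatibility check: in each of the Lodha--Moore groups, the exclusion pattern of allowed $y$-subscripts is exactly tailored so that the parallel conjugation argument never needs a forbidden generator. This is satisfying but must be confirmed case by case. Once it is, Lemma~\ref{lem:int_hnn} converts each verification of (i) and (ii) directly into the claimed HNN decomposition.
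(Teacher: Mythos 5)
Your proposal is correct and is exactly the ``parallel proof'' the paper invokes: in each case you apply Lemma~\ref{lem:int_hnn} by mirroring the generation argument of Lemma~\ref{lem:F_like_hnn} (conjugating subscripts toward the base cone, handling the pure-power subscripts via (LM1)) and by observing $H(s)^{x_\emptyset^{\pm 1}}\subseteq H(ss)\subsetneq H(s)$ for strictness. Your explicit check that the forbidden $y$-subscripts in $\LMright$ and $\LMleft$ are never needed is the one point the paper leaves tacit, and you resolve it correctly.
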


\subsection{Non-$F$-like HNN decompositions}\label{sec:nonFlike_HNN}

Now we discuss some ways in which the Lodha--Moore groups decompose into strictly ascending HNN extensions that are not ``$F$-like''. More precisely, this time the base subgroup will not be a semi-deferred copy of a larger Lodha--Moore group, but rather the natural embedded subgroup copy of a smaller Lodha--Moore group. The stable elements will be $y_0^{-1}$ and $y_1$ instead of $x_\emptyset^\pm$. In this subsection, we treat $\LMsmall$ as a subgroup of $\LMleft$ and $\LMright$, and those as subgroups of $\LMbig$, all in the natural way.

First, we have a technical lemma.

\begin{lemma}\label{lem:weird_conj}
 We have $y_0 x_\emptyset y_0^{-1} \in \LMsmall$ and $y_0^{-1} x_0 y_0 \not\in \LMright$.
\end{lemma}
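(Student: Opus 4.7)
The plan is to treat the two parts separately, both by direct computation in the strand-diagram / paired tree diagram calculus of Section~\ref{sec:strands}.

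For the first assertion, $y_0 x_\emptyset y_0^{-1} \in \LMsmall$, I would compute the action of this element on $2^\N$ by partitioning inputs according to their initial digits $000$, $001$, $01$, or $1$. A short tracking of the three factors $y_0$, $x_\emptyset$, $y_0^{-1}$ gives
\[
 000\eta\mapsto 0\eta,\quad 001\eta\mapsto 100(\eta.y^{-1}),\quad 01\eta\mapsto 101(\eta.y),\quad 1\eta\mapsto 11\eta\text{.}
\]
These formulas identify $y_0 x_\emptyset y_0^{-1}$ with the product $f\cdot y_{100}^{-1} y_{101}$, where $f\in \F$ is the Thompson element whose paired tree diagram sends the ordered leaves $(000,001,01,1)$ to $(0,100,101,11)$. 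Since $f\in \F\subseteq \LMsmall$ and the addresses $100,101$ lie outside $\{0^n\}\cup\{1^n\}$, every factor belongs to $\LMsmall$, and so does the product.

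For the second assertion, $y_0^{-1} x_0 y_0 \notin \LMright$, I would argue by contradiction. Applying relation~(LM5) at $s=0$ to $y_0^{-1}=y_{011}^{-1}y_{010}y_{00}^{-1}x_0^{-1}$ yields
\[
 y_0^{-1} x_0 y_0 \;=\; y_{011}^{-1}\,y_{010}\,y_{00}^{-1}\,y_0\text{.}
\]
Since $y_{010},y_{011}\in\LMright$, membership of $y_0^{-1} x_0 y_0$ in $\LMright$ is equivalent to $y_{00}^{-1} y_0\in\LMright$. I would then study a strand-diagram representative of $y_{00}^{-1} y_0$: the key claim is that no sequence of the reduction and expansion moves of Figures~\ref{fig:F_strand_moves} and~\ref{fig:LM_strand_moves} can remove every cyclone at an address of the form $0^n$. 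Because every element of $\LMright$ admits a strand-diagram representative whose cyclones sit only at addresses outside $\{0^n\}_{n\in\N_0}$ (a verification on the generating set, preserved by the moves), this would give a contradiction.

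The main obstacle is making the ``cannot be removed'' statement rigorous. Heuristically, expanding $y_0$ once via (LM5) rewrites $y_{00}^{-1} y_0$ as $y_{00}^{-1} x_0 y_{00}\cdot y_{010}^{-1} y_{011}$; the conjugate $y_{00}^{-1} x_0 y_{00}$ then presents the same obstruction after a second (LM5) expansion now at the level of $y_{00}$, and iterating pushes the $y_{0^n}$-cyclone to deeper and deeper addresses without ever letting it cancel. The cleanest framework to turn this infinite descent into a proof is the uniqueness of reduced paired tree diagrams for the Lodha--Moore groups developed in~\cite{lodha13}: one checks that the reduced paired tree diagram of $y_{00}^{-1} y_0$ carries a nonzero $y$-exponent at a leaf of address $0^n$ for some $n\geq 1$, which is incompatible with being an element of $\LMright$.
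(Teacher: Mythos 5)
Your proposal is correct, and its second half coincides with the paper's argument: the same application of (LM5) gives $y_0^{-1}x_0y_0=y_{011}^{-1}y_{010}y_{00}^{-1}y_0$, reducing the question to whether $y_{00}^{-1}y_0\in\LMright$, and the same appeal to the standard-form machinery of \cite[Section~5]{lodha13} finishes it --- your worry about making the strand-diagram ``the cyclone cannot be removed'' claim rigorous is well placed (the addresses of strands are not stable under the $F$-moves), and the uniqueness of standard forms is exactly the fix the paper uses, observing that $y_{00}^{-1}y_0$ is already in standard form and involves a $y_{0^n}$. The first half takes a mildly different route: the paper stays inside the presentation, first deriving the auxiliary relation $y_s=y_{s1}y_{s01}^{-1}y_{s00}x_s$ and then manipulating to get $y_0x_\emptyset y_0^{-1}=x_\emptyset^2 y_{110}y_{10}^{-1}x_1^{-1}$, whereas you compute the action on $2^\N$ directly and read off $y_0x_\emptyset y_0^{-1}=f\,y_{100}^{-1}y_{101}$ with $f\in F$. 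Your four case formulas check out, the two expressions define the same homeomorphism, and since the Lodha--Moore groups are by definition groups of maps on $2^\N$, the semantic computation is a perfectly legitimate (and arguably quicker) substitute for the paper's syntactic one; it buys you independence from the derived relation at the cost of trusting the explicit dynamical definitions rather than only the relations (LM1)--(LM5).
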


\begin{proof}
 Using the relations (LM1) through (LM5), we see that $y_s=y_{s1} y_{s01}^{-1} y_{s00} x_s$ for all $s$.
 
 Now using (LM1) through (LM5) and this new relation, we calculate:
\begin{align*}
 y_0 x_\emptyset y_0^{-1} &= (y_{01} y_{001}^{-1} y_{000} x_0) x_\emptyset (y_{01} y_{001}^{-1} y_{000} x_0)^{-1} \\
&= y_{01} y_{001}^{-1} y_{000} (x_0 x_\emptyset x_0^{-1}) y_{000}^{-1} y_{001} y_{01}^{-1} \\
&= y_{01} y_{001}^{-1} y_{000} x_\emptyset^2 x_1^{-1} x_0^{-1} y_{000}^{-1} y_{001} y_{01}^{-1} \\
&= x_\emptyset^2 y_{110} y_{10}^{-1} x_1^{-1} (y_{0} x_0^{-1}) y_{000}^{-1} y_{001} y_{01}^{-1} \\
&= x_\emptyset^2 y_{110} y_{10}^{-1} x_1^{-1} y_{01} y_{001}^{-1} y_{000} y_{000}^{-1} y_{001} y_{01}^{-1} \\
&= x_\emptyset^2 y_{110} y_{10}^{-1} x_1^{-1} \in \LMsmall \text{.}
\end{align*}

 To see that $y_0^{-1} x_0 y_0 \not\in \LMright$, we compute that $y_0^{-1} x_0 y_0 = y_{011}^{-1} y_{010} y_{00}^{-1} y_0$, and this is in $\LMright$ if and only if $y_{00}^{-1}y_0$ is. But this word is in \emph{standard form} (\cite[Definition~5.1]{lodha13}), and uses generators of the form $y_{0^n}$, so cannot lie in $\LMright$, by arguments similar to those in \cite[Section~5]{lodha13}.
\end{proof}

Figure~\ref{fig:weird_conj} shows that $y_0 x_\emptyset y_0^{-1} \in \LMsmall$ using strand diagrams.

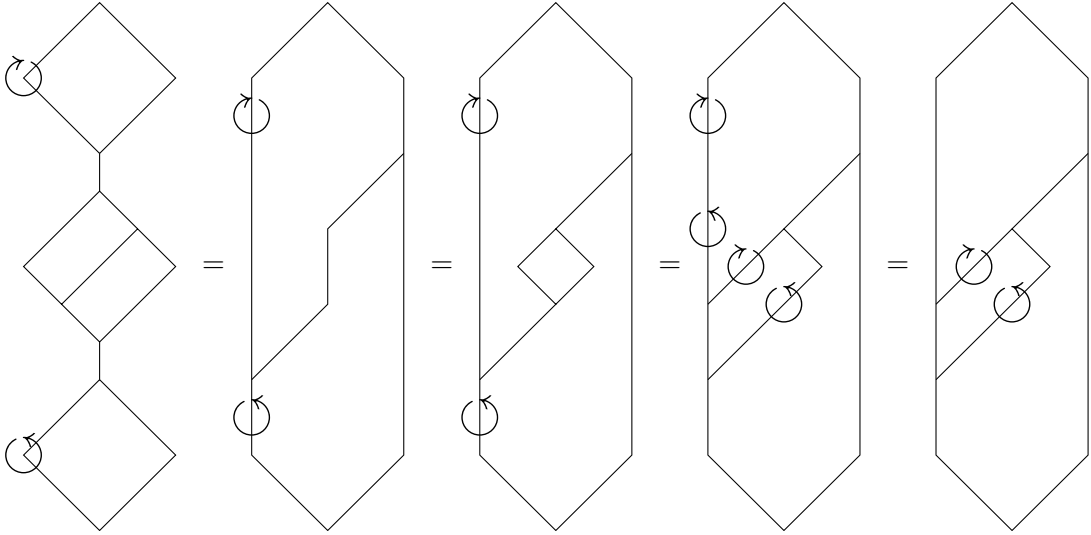
\begin{figure}[htb]
\begin{tikzpicture}[yscale=-1]
 \draw (0,0) -- (-1,-1) -- (0,-2) -- (1,-1) -- (0,0);
 \node at (-1,-1) {\huge $\circlearrowleft$};
 \draw (0,-2) -- (0,-2.5);
 \draw (0,0-2.5) -- (-1,-1-2.5) -- (0,-2-2.5) -- (1,-1-2.5) -- (0,0-2.5)   (-0.5,-0.5-2.5) -- (0.5,-1.5-2.5);
 \draw (0,-4.5) -- (0,-5);
 \draw (0,0-5) -- (-1,-1-5) -- (0,-2-5) -- (1,-1-5) -- (0,0-5);
 \node at (-1,-1-5) {\huge $\circlearrowright$}; \node at (1.5,-3.5) {$=$};
 \begin{scope}[xshift=3cm]
  \draw (0,0) -- (-1,-1) -- (-1,-6) -- (0,-7) -- (1,-6) -- (1,-1) -- (0,0);
  \draw (-1,-2) -- (0,-3) -- (0,-4) -- (1,-5);
	\node at (-1,-1.5) {\huge $\circlearrowleft$}; \node at (-1,-5.5) {\huge $\circlearrowright$}; \node at (1.5,-3.5) {$=$};
 \end{scope}
 \begin{scope}[xshift=6cm]
  \draw (0,0) -- (-1,-1) -- (-1,-6) -- (0,-7) -- (1,-6) -- (1,-1) -- (0,0);
  \draw (-1,-2) -- (0,-3) -- (-0.5,-3.5) -- (0,-4) -- (0.5,-3.5) -- (0,-3)   (0,-4) -- (1,-5);
	\node at (-1,-1.5) {\huge $\circlearrowleft$}; \node at (-1,-5.5) 
{\huge $\circlearrowright$}; \node at (1.5,-3.5) {$=$};
 \end{scope}
 \begin{scope}[xshift=9cm]
  \draw (0,0) -- (-1,-1) -- (-1,-6) -- (0,-7) -- (1,-6) -- (1,-1) -- (0,0);
  \draw (-1,-2) -- (0,-3)   (-1,-3) -- (-0.5,-3.5) -- (0,-4) -- (0.5,-3.5) -- (0,-3)   (0,-4) -- (1,-5);
	\node at (-1,-4) {\huge $\circlearrowleft$}; \node at (-1,-5.5) {\huge $\circlearrowright$}; \node at (-0.5,-3.5) {\huge $\circlearrowright$}; \node at (0,-3) {\huge $\circlearrowleft$}; \node at (1.5,-3.5) {$=$};
 \end{scope}
 \begin{scope}[xshift=12cm]
  \draw (0,0) -- (-1,-1) -- (-1,-6) -- (0,-7) -- (1,-6) -- (1,-1) -- (0,0);
  \draw (-1,-2) -- (0,-3)   (-1,-3) -- (-0.5,-3.5) -- (0,-4) -- (0.5,-3.5) -- (0,-3)   (0,-4) -- (1,-5);
	\node at (-0.5,-3.5) {\huge $\circlearrowright$}; \node at (0,-3) {\huge $\circlearrowleft$};
 \end{scope}
\end{tikzpicture}
\caption{A visual proof that $y_0 x_\emptyset y_0^{-1} \in \LMsmall$.}\label{fig:weird_conj}
\end{figure}

By similar proofs, we have that $y_1^{-1} x_\emptyset y_1 \in \LMsmall$ and $y_1 x_1 y_1^{-1} \not\in \LMleft$.

\begin{lemma}\label{lem:weird_hnn}
 We have that $\LMbig$ is a strictly ascending HNN extension of its subgroup $\LMright$ with stable element $y_0^{-1}$.
\end{lemma}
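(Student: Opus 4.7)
The plan is to apply Lemma~\ref{lem:int_hnn} with $H = \LMbig$, $B = \LMright$, and stable letter $z = y_0^{-1}$; three items need verification: $H = \langle B, z \rangle$, $y_0 B y_0^{-1} \subseteq B$, and strictness of this containment.

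Generation and strictness are quick given the available tools. The only generators of $\LMbig$ absent from $\LMright$ are the $y_{0^n}$ for $n \geq 0$. Relation (LM5) at $s = \emptyset$ puts $y_\emptyset = x_\emptyset y_0 y_{10}^{-1} y_{11}$ in $\langle \LMright, y_0 \rangle$; for $n \geq 1$, (LM5) at $s = 0^{n-1}$ expresses $y_{0^{n-1}}$ as a product of $y_{0^n}$ and elements of $\LMright$, using (LM4) to commute $y_{0^n}$ past $y_{0^{n-1}10}$ and $y_{0^{n-1}11}$ (whose addresses are pairwise prefix-incomparable with $0^n$). Solving and inducting on $n$ places every $y_{0^n}$ in $\langle \LMright, y_0 \rangle$. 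Strictness is immediate from Lemma~\ref{lem:weird_conj}: the statement $y_0^{-1} x_0 y_0 \notin \LMright$ is equivalent to $x_0 \notin y_0 \LMright y_0^{-1}$, while $x_0 \in \LMright$, so $y_0 \LMright y_0^{-1} \subsetneq \LMright$ (and this strict inclusion automatically takes care of the no-nontrivial-power hypothesis of Lemma~\ref{lem:int_hnn}).

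The real content is the containment $y_0 \LMright y_0^{-1} \subseteq \LMright$, which I would check on each standard generator $g$ of $\LMright$, namely the $x_s$ (all $s$) and the $y_t$ ($t \neq 0^n$). If the subscript begins with $1$, its address is prefix-incomparable with $0$, so $g$ commutes with $y_0$ via (LM3) and (LM4), giving $y_0 g y_0^{-1} = g \in \LMright$. For $g = x_\emptyset$, Lemma~\ref{lem:weird_conj} directly supplies $y_0 x_\emptyset y_0^{-1} \in \LMsmall \subseteq \LMright$. The remaining generators $x_{0u}$ and $y_{0u}$ (with $u \neq 0^m$) all lie in $\LMbig(0)$; for each I would expand $y_0$ via (LM5), or via the derived form $y_0 = y_{01} y_{001}^{-1} y_{000} x_0$ used already in the proof of Lemma~\ref{lem:weird_conj}, push $g$ leftward past the resulting $y$- and $x$-factors using (LM3), and then absorb any stranded product $y_{0^k} y_{0^{k+1}}^{-1}$ by applying (LM5) at $s = 0^k$ to rewrite it as $x_{0^k} y_{0^k10}^{-1} y_{0^k11}$. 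For example, the sample computation for $g = x_0$ gives
\[
y_0 x_0 y_0^{-1} = y_{01} y_{001}^{-1} x_{000} y_{00010}^{-1} y_{00011} y_{0001} y_{001}^{-1} x_0 \in \LMright,
\]
where every $y$-subscript contains a $1$; parallel computations (using the ``dual'' expansion $y_0 = x_0 y_{00} y_{010}^{-1} y_{011}$ when more convenient) handle the other generators.

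The main obstacle is executing this last case uniformly across all subscripts $0u$. Each instance is a mechanical push-and-absorb, but the bookkeeping (tracking how each address transforms under (LM3) and checking that every absorbed $y$-generator ends up with a $1$ in its subscript) is delicate. The strand-diagram calculus of Section~\ref{sec:strands} turns this into a visual routine: any cyclone that would land on an address $0^n$ is absorbed by the caret at $0^{n-1}$ via the expansion move of Figure~\ref{fig:LM_strand_moves}, so the final diagram carries no cyclone on the $0$-spine, witnessing membership in $\LMright$.
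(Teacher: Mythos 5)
Your skeleton is the paper's: apply Lemma~\ref{lem:int_hnn} with $B=\LMright$ and $z=y_0^{-1}$, get generation and strictness cheaply, and use Lemma~\ref{lem:weird_conj} both for the conjugate of $x_\emptyset$ and for properness. Generation and strictness are fine. The gap is in the containment $\LMright^{y_0^{-1}}\subseteq\LMright$: by checking it on the \emph{infinite} standard generating set you are left with infinitely many nontrivial conjugations $y_0 x_{0u} y_0^{-1}$ and $y_0 y_{0u} y_0^{-1}$, and for these you give only a strategy (``push and absorb'') plus an acknowledgment that the uniform bookkeeping is ``the main obstacle,'' deferred to a heuristic about strand diagrams. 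That is a description of a proof, not a proof. Worse, the one instance you do work out is not right as written: the terminal step that turns $x_0\,y_{01}^{-1}$ into $y_{001}^{-1}x_0$ is not an application of (LM3), because $01.x_0^{-1}$ is undefined (the partial action of $x_0^{-1}$ splits the address $01$). One must first expand $y_{01}=x_{01}y_{010}y_{0110}^{-1}y_{0111}$ via (LM5), which leaves additional factors (a $y_{011}^{-1}y_{010}$ and a trailing $x_{01}^{-1}$) that do not appear in your displayed formula; $001.x_0$ is $010$, not $01$. This is exactly the kind of address bookkeeping you flagged as delicate, and it is where your route breaks down.

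The difficulty is self-inflicted, and the paper's proof avoids it entirely: $\LMright$ is \emph{finitely} generated, $\LMright=\langle x_\emptyset, x_1, y_1, y_{10}\rangle$ (this follows from the three-generator presentation of $\LMsmall$ recalled in Section~\ref{sec:Sigma^2} together with the fact that each $y_{1^n}$ is an $x_\emptyset$-conjugate of $y_1$). With that generating set the containment is a four-line check: $x_1$, $y_1$ and $y_{10}$ have addresses prefix-incomparable with $0$ and hence commute with $y_0$ by (LM3)/(LM4), and $y_0 x_\emptyset y_0^{-1}\in\LMsmall\subseteq\LMright$ is precisely Lemma~\ref{lem:weird_conj}. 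Replacing your infinite case analysis with this finite one turns your outline into a complete proof; as it stands, the case $g\in\{x_{0u},y_{0u}\}$ is unproved.
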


\begin{proof}
 First note that $\LMbig$ is generated by $\LMright=\langle x_\emptyset, x_1, y_1, y_{10}\rangle$ and $y_0$. By Lemma~\ref{lem:weird_conj}, we have $x_\emptyset^{y_0^{-1}}\in \LMright$, and we also know that $x_1^{y_0^{-1}}=x_1$, $y_1^{y_0^{-1}}=y_1$ and $y_{10}^{y_0^{-1}}=y_{10}$, so in fact $\LMright^{y_0^{-1}}\subseteq \LMright$. That this inclusion is proper follows from Lemma~\ref{lem:weird_conj} as well, since the lemma says $x_0\not\in \LMright^{y_0^{-1}}$.
\end{proof}

\begin{corollary}\label{cor:weird_hnns}
 By parallel proofs to the proofs of the lemmas, we also see that $\LMleft$ is a strictly ascending HNN extension of $\LMsmall$ with stable element $y_0^{-1}$, and that $\LMbig$ (respectively $\LMright$) is a strictly ascending HNN extension of $\LMleft$ (respectively $\LMsmall$) with stable element $y_1$.
\end{corollary}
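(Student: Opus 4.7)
The plan is to imitate the proof of Lemma~\ref{lem:weird_hnn} in each of the three cases, verifying the hypotheses of the internal HNN criterion, Lemma~\ref{lem:int_hnn}: for each pair (base $B$, stable element $z$), check that $B$ and $z$ generate the ambient group, that $B^z\subseteq B$, and that this containment is strict.

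For $\LMleft=\LMsmall*_{y_0^{-1}}$, the only generators of $\LMleft$ missing from $\LMsmall$ are the $y_{0^n}$ for $n\ge 1$, and (LM5) applied inductively at $s=0^{n-1}$ expresses each such $y_{0^n}$ in terms of $y_{0^{n-1}}$, $x_{0^{n-1}}$, and $y$-generators whose subscripts lie outside $\{0^n,1^n\}_{n\in\N_0}$; starting from $y_0$ this recovers all of them, so $\LMleft=\langle\LMsmall,y_0\rangle$. For $y_0\LMsmall y_0^{-1}\subseteq\LMsmall$, I would check the finite generating set $\{x_\emptyset,x_1,y_{10}\}$: the cases $x_1$ and $y_{10}$ are immediate from disjoint supports (via (LM4)), and the case $x_\emptyset$ is exactly the first half of Lemma~\ref{lem:weird_conj}. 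Strictness follows from the second half: $y_0^{-1}x_0y_0\notin\LMright\supseteq\LMsmall$.

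For $\LMbig=\LMleft*_{y_1}$ and $\LMright=\LMsmall*_{y_1}$, I would invoke the symmetric analogs $y_1^{-1}x_\emptyset y_1\in\LMsmall$ and $y_1x_1y_1^{-1}\notin\LMleft$ noted just after Lemma~\ref{lem:weird_conj}. Generation reduces via (LM5) at $s=1^n$ to recovering each $y_{1^n}$ from $y_1$ and the base, with the additional observation in the $\LMbig$ case that once $y_{11}$ is in hand, $y_\emptyset$ is recovered by (LM5) at $s=\emptyset$. For the containment $y_1^{-1}Hy_1\subseteq H$ with $H\in\{\LMsmall,\LMleft\}$, I would check the generators: $x_0$ and (when present) $y_0$ are fixed by (LM4) or disjoint supports, $x_\emptyset$ is handled by the noted analog, and $x_1$ is recovered from the (LM1) identity $x_\emptyset^2=x_0x_\emptyset x_1$. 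Strictness again follows from $y_1x_1y_1^{-1}\notin\LMleft\supseteq\LMsmall$.

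The main obstacle I anticipate is computing $y_1^{-1}y_{10}y_1$, since $1$ is a prefix of $10$ and so (LM4) does not apply directly. The plan is to expand $y_1=x_1y_{10}y_{110}^{-1}y_{111}$ via (LM5), apply (LM3) in the form $x_1^{-1}y_{10}x_1=y_{110}$ (using $10.x_1=110$), and then collapse the remaining word using (LM4) at incomparable pairs of $y$-addresses; the net result should be the clean identity $y_1^{-1}y_{10}y_1=y_{110}$, which lies in both $\LMsmall$ and $\LMleft$ since $110\notin\{0^n,1^n\}_{n\in\N_0}$. With this identity established, the three HNN decompositions follow in exactly the same pattern as Lemma~\ref{lem:weird_hnn}.
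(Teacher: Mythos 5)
Your overall strategy is the intended one---fill in the ``parallel proofs'' by running the argument of Lemma~\ref{lem:weird_hnn} through Lemma~\ref{lem:int_hnn} for each pair---and your treatment of the case $\LMleft\cong\LMsmall*_{y_0^{-1}}$ is correct. The problem is the step you yourself single out as the main obstacle. The identity $y_1^{-1}y_{10}y_1=y_{110}$ is false, and its derivation breaks at the first move: $10.x_1$ is not defined (and is certainly not $110$). The partial action of $x_1$ sends the cone at $100$ onto the cone at $10$ and the cone at $101$ onto the cone at $110$, so the cone at $10$ is not carried to any single cone; what is true is $101.x_1=110$. One can also see directly that no computation can rescue the claimed identity: conjugation transports supports, and $y_1$ maps the cone at $100$ onto the cone at $10$, so $y_1^{-1}y_{10}y_1$ moves points of the cone at $10$, whereas $y_{110}$ does not. (The conjugate does lie in $\LMsmall$---expanding $y_1=y_{11}y_{101}^{-1}y_{100}x_1$ and $y_{10}=x_{10}y_{100}y_{1010}^{-1}y_{1011}$ and simplifying produces a word in $x$-generators and $y_t$'s with $t\notin\{0^n,1^n\}_{n\in\N_0}$---but it is not a single generator, and your route to it does not work as written.)

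The clean parallel proof avoids this computation entirely by choosing better generating sets. Since $x_1=x_\emptyset^{-1}x_0^{-1}x_\emptyset^2$ and $y_{10}=x_\emptyset^{-1}y_{01}x_\emptyset$ (by (LM3), as $01.x_\emptyset=10$), we have $\LMsmall=\langle x_\emptyset,x_0,y_{01}\rangle$, and hence, by your generation argument, $\LMleft=\langle x_\emptyset,x_0,y_0,y_{01}\rangle$. In each of these generating sets every generator other than $x_\emptyset$ is supported in the cone at $0$ and so commutes with $y_1$; thus the only conjugate needing attention is $y_1^{-1}x_\emptyset y_1\in\LMsmall$, which is exactly the stated mirror of Lemma~\ref{lem:weird_conj}, and strictness follows from $y_1x_1y_1^{-1}\notin\LMleft$ as you say. (This mirrors the paper's use of $\LMright=\langle x_\emptyset,x_1,y_1,y_{10}\rangle$ in Lemma~\ref{lem:weird_hnn}.) One further small point on generation: along the chain $y_{1^n}$, relation (LM5) at $s=1^n$ produces $y_{1^{n+2}}$ rather than $y_{1^{n+1}}$; to step by one you should use the derived relation $y_s=y_{s1}y_{s01}^{-1}y_{s00}x_s$ from the proof of Lemma~\ref{lem:weird_conj}.
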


\medskip

There are a lot of different HNN extensions to keep track of now, so we collect all the results of this section here, and name each HNN decomposition for reference. Here, whenever a smaller group is the base of a larger group, it is via the natural inclusion, e.g., $\LMsmall\subseteq \LMright$, and whenever a larger group is the base of a smaller group, it is via an isomorphism with a semi-deferred version, e.g., $\LMleft\cong \LMsmall(1) \subseteq \LMsmall$.

\begin{align*}\begin{array}{lcll}
 (\textrm{HNN1}) &\LMsmall&\cong&(\LMleft)*_{x_\emptyset} \\
 (\textrm{HNN2}) &\LMsmall&\cong&(\LMright)*_{x_\emptyset^{-1}} \\
 (\textrm{HNN3}) &\LMright&\cong&(\LMbig)*_{x_\emptyset} \\
 (\textrm{HNN4}) &\LMright&\cong&\LMsmall*_{y_1} \\
 (\textrm{HNN5}) &\LMleft&\cong&\LMsmall*_{y_0^{-1}} \\
 (\textrm{HNN6}) &\LMleft&\cong&(\LMbig)*_{x_\emptyset^{-1}} \\
 (\textrm{HNN7}) &\LMbig&\cong&(\LMright)*_{y_0^{-1}} \\
 (\textrm{HNN8}) &\LMbig&\cong&(\LMleft)*_{y_1} \end{array}
\end{align*}

The astute reader might notice some missing, like $\LMright\cong (\LMright)*_{x_\emptyset^{-1}}$ (via $\LMright(0)\cong \LMright$), but it turns out these eight are the only ones that will be usable for our purposes.

\begin{remark}
 There are some other strictly ascending HNN extensions related to these groups that we will not make use of, but which seem worth mentioning. First, one can calculate that $y_1^{-1}y_0 x_\emptyset y_0^{-1} y_1 = x_\emptyset^2$ (this was essentially done at the end of Section~4 of~\cite{lodha13}, and the reader is encouraged to verify it using strand diagrams), and so we can build a strictly ascending HNN extension $\LMsmall*_{y_0^{-1} y_1}$. This contains the group $\langle \sqrt{x_\emptyset},x_1\rangle$, where $\sqrt{x_\emptyset}\defeq y_1 y_0^{-1} x_\emptyset y_0 y_1^{-1}$ (so called since $(\sqrt{x_\emptyset})^2 = x_\emptyset$), which is a non-amenable, free group-free group with only two generators (it is not known whether it is finitely presented).
\end{remark}

\begin{remark}
 As the previous remark stated, $y_1^{-1}y_0 x_\emptyset y_0^{-1} y_1 = x_\emptyset^2$, and it is easy to check that $\LMsmall$ contains a copy of the Baumslag--Solitar group $BS(2,1)$. On the other hand, Thompson's group $V$ does not contain $BS(2,1)$ \cite{roever99}, so $\LMsmall$ does not embed into $V$. It is conjectured that every co$\mathcal{CF}$ group embeds into $V$ \cite{bleak13}, and so if $\LMsmall$ turned out to be co$\mathcal{CF}$, then it would be a counterexample.
\end{remark}


\section{Vanishing homotopy at infinity}\label{sec:htpy_infty}

Having decomposed the Lodha--Moore groups as ascending HNN extensions of each other, we can now apply results from \cite{mihalik85} and \cite{geoghegan08}, and techniques of Brown--Geoghegan \cite{brown84}, to quickly derive Theorem~\ref{thrm:LM_htpy_infty_zero}, that the groups have trivial homotopy groups at infinity. The reader interested in definitions and background is directed to Chapters 16 and 17 of \cite{geoghegan08}. We will at least quickly state a definition:

\begin{definition}\cite[Sections~17.1, 17.2]{geoghegan08}
 Let $H$ be a group of type $\F_n$ and let $X$ be a $K(H,1)$ with compact $n$-skeleton. Let $\widetilde{X}$ be the universal cover of $X$. We say that $H$ is \emph{$(n-1)$-connected at infinity} if for any compact set $C\subseteq \widetilde{X}$ there exists a compact set $C\subseteq D\subseteq \widetilde{X}$ such that the inclusion $\widetilde{X}\setminus D \hookrightarrow \widetilde{X}\setminus C$ induces the zero map in $\pi_k$ for all $k<n$. If a group is $n$-connected at infinity for all $n$ we say it has \emph{trivial homotopy groups at infinity}.
\end{definition}

The property of being $0$-connected at infinity is called being \emph{one-ended}. The property of being $1$-connected at infinity is, naturally, called being \emph{simply connected at infinity}. We also have the obvious parallel notion of having trivial \emph{homology} groups at infinity.

\begin{cit}\cite[Theorem~3.1]{mihalik85}\cite[Theorem~16.9.5]{geoghegan08}\label{cit:hnn_simp_conn_infty}
 Let $B$ be a $1$-ended finitely presented group and $\phi\colon B\to B$ a monomorphism. Then the HNN extension $B*_{\phi,t}$ is simply connected at infinity.
\end{cit}

\begin{corollary}\label{cor:LM_simp_conn_infty}
 The Lodha--Moore groups are simply connected at infinity.
\end{corollary}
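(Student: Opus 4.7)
The plan is to apply Citation~\ref{cit:hnn_simp_conn_infty} to each of the eight HNN decompositions (HNN1) through (HNN8). The base group in each of these decompositions is itself a Lodha--Moore group, which is finitely presented (indeed of type $\F_\infty$ by \cite{lodha14}). The only hypothesis of the citation that is not yet in hand is that each base group is one-ended, so this is what I would establish first.

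To show that every Lodha--Moore group $H$ is one-ended, I would combine Stallings' ends theorem with the three key facts already known about these groups: $H$ is torsion-free, contains no non-abelian free subgroup \cite{lodha13}, and has abelianization $\Z^3$ (the corollary at the end of Section~\ref{sec:chars}). Since $H^{ab}\cong\Z^3$, the group $H$ is infinite and not virtually cyclic, so it has either one end or infinitely many. If $H$ had infinitely many ends, then by Stallings' theorem $H$ would split non-trivially as either $A *_C B$ or as an HNN extension over a finite subgroup $C$; torsion-freeness forces $C=1$, leaving either a proper free product $H\cong A*B$ (with $A,B\neq 1$) or an HNN extension $H\cong A*\Z$ over the trivial subgroup (with $A\neq 1$, since $H=\Z$ is ruled out by $H^{ab}\cong\Z^3$). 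In either case, picking a non-trivial $a\in A$ together with either a non-trivial $b\in B$ or the stable letter $t$ gives a copy of $\Z*\Z\cong F_2$ inside $H$ (all elements involved have infinite order), contradicting the absence of non-abelian free subgroups. Hence $H$ is one-ended.

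With one-endedness of all four Lodha--Moore groups established, each of (HNN1)--(HNN8) realizes a given Lodha--Moore group as an ascending HNN extension $B*_{\phi,t}$ of a one-ended, finitely presented group $B$, and Citation~\ref{cit:hnn_simp_conn_infty} immediately yields that the group is simply connected at infinity.

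The main obstacle is the one-endedness step, and it is precisely there that the deep theorem of Lodha--Moore on the absence of non-abelian free subgroups becomes essential; Stallings' theorem on its own would not suffice to rule out a free product decomposition. Once one-endedness is in hand, the corollary is a direct application of the citation to the HNN decompositions already proved in Section~\ref{sec:HNN}.
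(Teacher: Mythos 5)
Your proposal is correct and takes essentially the same approach as the paper: the paper likewise deduces one-endedness from the groups being non-virtually-cyclic and containing no non-abelian free subgroups (leaving the Stallings-type argument you spell out implicit), and then applies Citation~\ref{cit:hnn_simp_conn_infty} to the HNN decompositions of Section~\ref{sec:HNN}. The additional detail you provide for the one-endedness step is sound.
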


\begin{proof}
 The Lodha--Moore groups are clearly $1$-ended, since they are not virtually cyclic and contain no non-abelian free groups. Also, they decompose as ascending HNN extensions of each other as seen in Section~\ref{sec:HNN}, so the result is immediate from Citation~\ref{cit:hnn_simp_conn_infty}.
\end{proof}

Now Theorems~13.3.3(ii) and~17.2.1 of \cite{geoghegan08} reduce our problem to the next proposition. A heuristic summary of this reduction is: since we already have simple connectivity at infinity, the homotopy at infinity of $H$ will vanish as soon as the homology at infinity of $H$ vanishes (thanks to the Hurewicz Theorem), and the homology at infinity of $H$ will vanish as soon as $H^*(H;\Z H)$ vanishes (thanks to some arguments from homological algebra). So, it suffices to prove:

\begin{proposition}\label{prop:LM_htpy_infty_zero}
 For each Lodha--Moore group $H$, we have $H^*(H;\Z H)=0$.
\end{proposition}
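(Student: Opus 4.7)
The plan is to prove $H^n(H;\Z H)=0$ for every Lodha--Moore group $H$ and every $n\ge 0$ by a strong induction on $n$, carried out simultaneously for all four groups $\LMsmall,\LMleft,\LMright,\LMbig$, using the HNN decompositions (HNN1)--(HNN8) and the Mayer--Vietoris long exact sequence for HNN extensions.

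For each Lodha--Moore group $H$, fix one of the ascending HNN decompositions $H\cong B*_{\phi,t}$ with $B$ another Lodha--Moore group. The Mayer--Vietoris sequence associated to this decomposition, with coefficients in the $\Z H$-module $\Z H$, reads
\[
\cdots \to H^{n-1}(B;\Z H) \to H^n(H;\Z H) \to H^n(B;\Z H) \xrightarrow{1-\phi^*} H^n(B;\Z H) \to \cdots.
\]
The key algebraic input is that every Lodha--Moore group is of type $\F_\infty$, and hence $\FP_\infty$, so the functor $H^k(B;-)$ commutes with arbitrary direct sums in each fixed degree. Since $\Z H$, viewed as a left $\Z B$-module, is free with basis the right coset set $B\backslash H$, this yields the reduction
\[
H^k(B;\Z H) \;\cong\; \bigoplus_{B\backslash H} H^k(B;\Z B).
\]

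The base case $n=0$ is immediate: each $H$ is infinite, so $H^0(H;\Z H)=(\Z H)^H=0$. For the inductive step, assume $H^k(B';\Z B')=0$ for all $k<n$ and every Lodha--Moore $B'$. By the identification above this kills $H^{n-1}(B;\Z H)$, so the Mayer--Vietoris sequence reduces the target vanishing $H^n(H;\Z H)=0$ to showing
\[
\ker\bigl(1-\phi^*\colon H^n(B;\Z H)\to H^n(B;\Z H)\bigr)=0.
\]

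The principal obstacle is this kernel step, since the inductive hypothesis does not a priori give $H^n(B;\Z B)=0$, and hence the direct sum $\bigoplus_{B\backslash H}H^n(B;\Z B)$ need not vanish. The plan for finishing is to exploit the fact that every HNN decomposition in Section~\ref{sec:HNN} is \emph{strictly} ascending: writing $H=\bigcup_{m\ge 0}t^{-m}Bt^m$ and identifying $B\backslash H$ with $\varinjlim_m B/\phi^m(B)$ via the injections $[b]\mapsto[\phi(b)]$ induced by $\phi$, the endomorphism $\phi^*$ acts as a shift on the resulting direct sum whose failure to be surjective (the strictness) should prevent any nonzero fixed vector, forcing $1-\phi^*$ to be injective. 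If the bare shift argument is not decisive, the fact that each of the four groups admits \emph{two} distinct HNN decompositions in Section~\ref{sec:HNN} lets me run the Mayer--Vietoris machinery twice for the same $H$, intersecting the two kernels to rule out the remaining possibilities and close the induction.
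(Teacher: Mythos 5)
Your overall strategy---induction on the cohomological degree, run over all four Lodha--Moore groups at once, using the Mayer--Vietoris sequence of the ascending HNN decompositions from Section~\ref{sec:HNN} and the type $\F_\infty$ (hence $\FP_\infty$) property to identify $H^k(B;\Z H)$ with a direct sum of copies of $H^k(B;\Z B)$---is essentially the paper's strategy; the paper phrases the induction as the vanishing of $H^i(H;L)$ for every \emph{free} $\Z H$-module $L$, which packages the same direct-sum reduction. The genuine gap is exactly the step you flag as the ``principal obstacle'': the injectivity of the endomorphism of $H^n(B;\Z H)$ appearing in the Mayer--Vietoris sequence. What you offer there is a plan, not a proof. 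The assertion that $\phi^*$ ``acts as a shift'' on $\bigoplus_{B\backslash H}H^n(B;\Z B)$ whose strictness forces $1-\phi^*$ to be injective requires a careful analysis of how the coset decomposition of $\Z H$ over $\Z B$ interacts with both arrows of the Mayer--Vietoris sequence (restriction along the identity and restriction along $\phi$ twisted by $t$), and this analysis is precisely the nontrivial content of the argument---it does not follow formally from strictness of the extension. Your fallback of running the sequence for both available HNN decompositions and ``intersecting the two kernels'' is not justified either: the two kernels live in the cohomology of two different base groups, and there is no map exhibited between them, so there is nothing to intersect.

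The paper closes this gap by citing Brown--Geoghegan \cite{brown85}, who prove that for an ascending HNN extension whose total group is of type $\FP_\infty$, the map $\phi^*$ in the Mayer--Vietoris sequence with free coefficient modules is injective; this is a genuine theorem, not a formal consequence of the setup, and it is the reason the $\F_\infty$ hypothesis (established in \cite{lodha14}) is essential here beyond the direct-sum commutation you already use it for. If you invoke that result (or reprove it), your induction goes through. Two minor further remarks: the paper starts the induction at $i=1$, where $H^1(H;L)=0$ follows from one-endedness via \cite[Theorem~13.3.3(ii)]{geoghegan08}, whereas your base case at $n=0$ is also fine but then already requires the injectivity input in degree $1$; and your sign $1-\phi^*$ versus the paper's $\phi^*$ is immaterial once the correct form of the Mayer--Vietoris map is pinned down, but it is another indication that the structure of that map needs to be made precise before any shift argument can be run.
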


\begin{proof}
 We proceed similarly to the proof of \cite[Theorem~7.2]{brown84}; all claims can be compared to the corresponding steps of that proof. We will actually show that for all $i\ge1$, $H^i(H;L)=0$ for any free $\Z H$-module $L$ ($H$ is of type $\F_\infty$ \cite{lodha14} so this is equivalent). We induct on $i$. The base case is that $H^1(H;L)=0$, which follows since $H$ is $1$-ended and finitely generated \cite[Theorem~13.3.3(ii)]{geoghegan08}. Now suppose $i>1$. Let $H=B*_{\phi,t}$ be some HNN decomposition of $H$ from the list (HNN1) through (HNN8). There is a Mayer--Vietoris sequence
 $$\cdots\to H^{i-1}(B;L) \to H^i(H;L) \to H^i(B;L) \stackrel{\phi^*}{\to} H^i(B;L) \to \cdots \text{.}$$
 Since $B$ is itself a Lodha--Moore group, and since $L$ is free over $\Z B$, by induction we are assuming that $H^{i-1}(B;L)=0$. Also, $\phi^*$ is injective since $H$ is of type $\F_\infty$ \cite{brown85}. We conclude from the exactness of the sequence that $H^i(H;L)=0$.
\end{proof}

We conclude:

\begin{theorem}\label{thrm:LM_htpy_infty_zero}
 All the homotopy groups at infinity of any Lodha--Moore group are trivial. \qed
\end{theorem}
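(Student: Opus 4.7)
The plan is to combine the two pieces already in hand, Corollary~\ref{cor:LM_simp_conn_infty} and Proposition~\ref{prop:LM_htpy_infty_zero}, with the standard homological-algebra machinery relating group cohomology with $\Z H$ coefficients to the homotopy at infinity of the universal cover of a $K(H,1)$ with finite skeleta; specifically Theorems~13.3.3(ii) and~17.2.1 of \cite{geoghegan08}, as the author already flagged immediately before Proposition~\ref{prop:LM_htpy_infty_zero}. In particular, no new geometric content is needed at this stage — the argument is simply that a one-ended, simply connected at infinity, type $\F_\infty$ group whose cohomology with free $\Z H$-coefficients vanishes must have all homotopy groups at infinity trivial.

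Concretely, let $H$ be a Lodha--Moore group and let $X$ be a $K(H,1)$ with finite $n$-skeleton for every $n$, which exists since $H$ is of type $\F_\infty$ by \cite{lodha14}. First I would invoke Theorem~17.2.1 of \cite{geoghegan08}, which translates vanishing of $H^*(H;\Z H)$ into vanishing (up to the appropriate degree shift) of all reduced pro-homology groups at infinity of the universal cover $\widetilde{X}$. Proposition~\ref{prop:LM_htpy_infty_zero} supplies exactly this hypothesis in every degree, so $\widetilde{X}$ is acyclic at infinity. Next I would feed this into the Hurewicz-type result Theorem~13.3.3(ii) of \cite{geoghegan08}: Corollary~\ref{cor:LM_simp_conn_infty} already tells us $\widetilde{X}$ is simply connected at infinity, so the hypothesis of that theorem is met, and a pro-Hurewicz induction upgrades the vanishing of homology at infinity to vanishing of all homotopy groups at infinity.

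The main obstacle is not any fresh mathematical difficulty but only the bookkeeping of assembling the cited results correctly; the real work was front-loaded into establishing the HNN decompositions (HNN1)--(HNN8) of Section~\ref{sec:HNN} and into the Mayer--Vietoris induction of Proposition~\ref{prop:LM_htpy_infty_zero}. The overall strategy is identical in spirit to Brown and Geoghegan's proof for $F$ in \cite{brown84}, and at this point the theorem follows by a one-sentence citation.
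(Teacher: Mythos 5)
Your proposal is correct and follows exactly the paper's own route: the theorem is deduced by combining Corollary~\ref{cor:LM_simp_conn_infty} and Proposition~\ref{prop:LM_htpy_infty_zero} via Theorems~13.3.3(ii) and~17.2.1 of \cite{geoghegan08}, upgrading vanishing homology at infinity to vanishing homotopy at infinity using simple connectivity at infinity and a Hurewicz argument. No discrepancies to report.
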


As a remark, a crucial point in the proof of the proposition was that $B$ is itself \emph{a} Lodha--Moore group (though not necessarily isomorphic to $H$) and so we can apply the induction hypothesis to $B$.

\medskip

To reiterate, this proves that the Lodha--Moore groups satisfy all four components of the 1979 Geoghegan Conjecture for Thompson's group $F$, namely
\begin{itemize}
 \item[(1)] They are of type $\F_\infty$.
 \item[(2)] They have no non-abelian free subgroups.
 \item[(3)] They are non-amenable.
 \item[(4)] They have trivial homotopy groups at infinity.
\end{itemize}
The first three of these were proved by Lodha \cite{lodha14} and Lodha--Moore \cite{lodha13}. The only part still open for $F$ itself is (3).


\section{The BNS-invariant}\label{sec:inv}

Our HNN decompositions also allow us to analyze the BNSR-invariants of the Lodha--Moore groups. We will first focus on the BNS-invariant $\Sigma^1$, and in Section~\ref{sec:bnsr} we will discuss the higher BNSR-invariants $\Sigma^m$. There are some elegant tools for computing $\Sigma^1$ that have no known generalization for $\Sigma^m$, and in this section we will use these tools to compute $\Sigma^1$ of the Lodha--Moore groups. The work done in Section~\ref{sec:bnsr} to compute $\Sigma^2$ (and parts of the higher $\Sigma^m$) could also be used to compute $\Sigma^1$, but, as the reader will notice, things get very technical in that section, and it is more pleasant to compute $\Sigma^1$ using the aforementioned tools.

The Bieri--Neumann--Strebel (BNS) invariant of a finitely generated group $H$ is a subset of the \emph{character sphere} $S(H)$. A \emph{character} of $H$ is a homomorphism $\chi \colon H\to \R$. (If the image is infinite cyclic, then recall we call this a discrete character.) Two characters are \emph{equivalent} if they differ by multiplication by a positive real number. The equivalence classes $[\chi]$ of characters of $H$ form a sphere $S(H)=S^{d-1}$ where $d$ is the rank of the torsion-free part of the abelianization of $H$. The BNS-invariant $\Sigma^1(H)$ is the subset of $S(H)$ given by:
$$\Sigma^1(H)\defeq \{[\chi]\mid \Gamma(H)^{\chi\ge0}\text{ is connected}\} \text{,}$$
where $\Gamma(H)$ is the Cayley graph of $H$ using any finite generating set, and $\Gamma(H)^{\chi\ge0}$ is the full subgraph spanned by those vertices on which $\chi$ takes non-negative values. It is standard notation to write $\Sigma^1(H)^c$ for $S(H)\setminus \Sigma^1(H)$.

One main application of knowing the BNS-invariant of a group is that it reveals exactly which normal subgroups corresponding to abelian quotients are finitely generated. More precisely, we have:

\begin{cit}\cite[Theorem~1.1]{bieri10}\label{cit:fin_props_bns}
 Let $H$ be a finitely generated group and $N$ a normal subgroup containing $[H,H]$. Then $N$ is finitely generated if and only if $[\chi]\in\Sigma^1(H)$ for every character $\chi$ with $\chi(N)=0$.
\end{cit}

\subsection{Tools}\label{sec:tools}

We collect here a variety of tools for deducing whether characters are in the BNS-invariant $\Sigma^1$ or its complement. Some generalize to higher $\Sigma^m$ and some do not. The first two results involve ascending HNN extensions. They deal with the cases of the base group either lying in the kernel of a character, or not. These both generalize to higher $\Sigma^m$, as we will mention in Section~\ref{sec:bnsr}.

\begin{cit}\cite[Theorem~2.1]{bieri10}\label{cit:hnn_bns}
 Let $H=B*_t$ for $B$ finitely generated, and let $\chi \colon H\to\R$ be the character given by $\chi(B)=0$ and $\chi(t)=1$. Then $[\chi]\in\Sigma^1(H)$ and moreover if $B^t\subsetneq B$ then $[-\chi]\in\Sigma^1(H)^c$.
\end{cit}

\begin{cit}\cite[Theorem~2.3]{bieri10}\label{cit:push_Sigma_1}
 Let $H=B*_t$ for $B$ finitely generated, and let $\chi\colon H\to \R$ be a character. Suppose $\chi|_B \neq 0$ and $[\chi|_B]\in \Sigma^1(B)$. Then $[\chi]\in\Sigma^1(H)$.
\end{cit}

It is in general very difficult to compute the BNS-invariant using the definition. An alternate characterization of $\Sigma^1(H)$, due to Brown \cite{brown87bns}, involves inspecting actions of $H$ on $\R$-trees, and over the years this definition has led to some powerful technology for computing $\Sigma^1(H)$. A modern formulation is distilled in \cite{koban14}, and we review the details here. Analogues of this technique for higher $\Sigma^m$ have yet to be developed.

Given a character $\chi$ of a group $H$, we call an element $h\in H$ \emph{$\chi$-hyperbolic} if $\chi(h)\neq0$. By the \emph{commuting graph} $C(J)$ of a subset $J\subseteq H$ we mean the graph whose vertex set is $J$ and which has an edge connecting $g,h\in J$ if and only if $[g,h]=1$. Finally, given $I,J\subseteq H$, we say $J$ \emph{dominates} $I$ if every element of $I$ commutes with some element of $J$.

\begin{cit}[Hyperbolics dominating generators]\cite[Lemma~1.9]{koban14}\label{cit:conn_dom}
 Let $\chi$ be a character of a group $H$. If there exists a set $J$ of $\chi$-hyperbolic elements with $C(J)$ connected, and a set $I$ generating $H$ such that $J$ dominates $I$, then $[\chi]\in\Sigma^1(H)$.
\end{cit}

In summary, this provides a nice way to translate knowledge about generators and commutator relations into knowledge about the BNS-invariant. In the next subsection we compile all these tools together, ultimately computing the BNS-invariant for the Lodha--Moore groups.

\subsection{Computations}\label{sec:computations}

The main result of this subsection is:

\begin{theorem}\label{thrm:BNS}
 The BNS-invariants for $\LMsmall$, $\LMright$, $\LMleft$ and $\LMbig$ are all of the form $S^2 \setminus P$ for some set $P$ with $|P|=2$. More precisely, for $\LMsmall$ we have $P=\{[\chi_0],[\chi_1]\}$, for $\LMleft$ we have $P=\{[\psi_0],[\chi_1]\}$, for $\LMright$ we have $P=\{[\chi_0],[-\psi_1]\}$ and for $\LMbig$ we have $P=\{[\psi_0],[-\psi_1]\}$.
\end{theorem}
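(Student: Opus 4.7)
Since each Lodha--Moore group has abelianization $\Z^3$, the character sphere $S(H)$ is $S^2$ in every case. The theorem splits into two inclusions: $P\subseteq\Sigma^1(H)^c$, and $S(H)\setminus P\subseteq\Sigma^1(H)$.

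For the first inclusion, my plan is to feed each of the eight decompositions (HNN1)--(HNN8) into Citation~\ref{cit:hnn_bns}. Every decomposition on that list is strictly ascending, so the citation places one point in $\Sigma^1(H)^c$ for each, namely $[-\chi]$, where $\chi$ is uniquely determined by $\chi(B)=0$ and $\chi(t)=1$. Reading off which character this is in each case is routine bookkeeping against the definitions in Section~\ref{sec:chars}: for (HNN1), with stable element $x_\emptyset$ and base $\LMsmall(1)\cong\LMleft$, we get $\chi=-\chi_0$ and so $[\chi_0]\in\Sigma^1(\LMsmall)^c$; for (HNN7), with stable element $y_0^{-1}$ and base $\LMright$, we get $\chi=-\psi_0$ and so $[\psi_0]\in\Sigma^1(\LMbig)^c$; and so on through the list. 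The eight cases recover exactly the two listed points of $P$ for each of the four groups.

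For the second inclusion, I would combine Citations~\ref{cit:push_Sigma_1} and~\ref{cit:conn_dom}, working up from $\LMsmall$ through $\LMleft,\LMright$ to $\LMbig$ via the HNN decompositions. The starting point is $\LMsmall$: for any $[\chi]\in S(\LMsmall)\setminus\{[\chi_0],[\chi_1]\}$, I would build a family $J$ of pairwise-commuting $\chi$-hyperbolic elements that dominates the standard generating set, exploiting the fact that $x_s,y_t$ at prefix-incomparable addresses commute by (LM2)--(LM4). The construction splits by whether $\chi(y_{10})=0$: when $\chi(y_{10})\neq0$ one uses $y$-generators at deep, pairwise prefix-incomparable addresses, all automatically hyperbolic and mutually commuting; when $\chi(y_{10})=0$, the restriction $\chi|_F$ is a nonzero character of $F$ outside $\{[\chi_0],[\chi_1]\}$ and hence lies in $\Sigma^1(F)$, and the classical dominating family of $x$-generators for $\Sigma^1(F)$ still works in $\LMsmall$ since its members can be chosen at addresses prefix-incomparable to all finitely many $y_t$'s in a fixed finite generating set. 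Once $\Sigma^1(\LMsmall)$ is in hand, I would propagate to $\LMright,\LMleft$ via (HNN4), (HNN5), and thence to $\LMbig$ via (HNN7), (HNN8): for any $[\chi]\in S(H)\setminus P$ with $\chi|_B\neq0$ and $[\chi|_B]\in\Sigma^1(B)$, Citation~\ref{cit:push_Sigma_1} places $[\chi]$ in $\Sigma^1(H)$; the remaining characters, with $\chi|_B=0$, form an antipodal pair, one already identified as a point of $P$ and the other placed in $\Sigma^1(H)$ by the first half of Citation~\ref{cit:hnn_bns}.

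The main obstacle will be the step when $[\chi|_B]$ lands on a forbidden character of $B$ even though $[\chi]$ is not in $P$. For example, when propagating from $\LMsmall$ to $\LMright$ via (HNN4), there exist characters on $\LMright$ with $[\chi|_{\LMsmall}]=[\chi_0]$ yet $[\chi]\notin P$; Citation~\ref{cit:push_Sigma_1} fails here, so one must either switch to a secondary decomposition such as (HNN3), which realizes $\LMright$ as an ascending HNN over $\LMbig$ and so provides a different restriction to test, or invoke Citation~\ref{cit:conn_dom} directly and write down a commuting family of hyperbolic elements built from $x_{0^n}$ and $y_{1^m}$ at varying depths. Organizing this case analysis so that every point of $S(H)\setminus P$ is caught by at least one of the three tools, without circularity in the propagation, is the combinatorially delicate heart of the argument.
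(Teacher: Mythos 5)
Your proposal is correct and follows essentially the same route as the paper: the complement is identified by feeding (HNN1)--(HNN8) into Citation~\ref{cit:hnn_bns}, and the rest of the sphere is covered by combining Citation~\ref{cit:push_Sigma_1} with direct applications of Citation~\ref{cit:conn_dom} to connected commuting families of hyperbolic elements of the form $x_{0^n}$, $x_{1^n}$, $y_s$, exactly as in Propositions~\ref{prop:big_hemispheres}, \ref{prop:big_equator} and~\ref{prop:right_equator}. The only differences are that you propagate bottom-up from $\LMsmall$ whereas the paper works top-down from $\LMbig$ (using the $F$-like decompositions (HNN3) and (HNN6) for the pushes), and the equatorial characters you flag as the delicate point are indeed handled in the paper by precisely the direct $\{x_{0^n},y_{1^m}\}$-type argument you describe as the fallback.
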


We first focus on $\LMbig$. Recall that every character $\chi$ is of the form $\chi=a\psi_0+b\psi_1+c\psi$ for $a,b,c\in\R$.

\begin{proposition}[North/south hemispheres]\label{prop:big_hemispheres}
 Let $\chi$ be a character of $\LMbig$ as above with $c\neq 0$. Then $[\chi]\in\Sigma^1(\LMbig)$.
\end{proposition}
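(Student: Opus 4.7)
The plan is to combine Citation~\ref{cit:conn_dom} (Koban--Wong) with the HNN-lifting tool of Citation~\ref{cit:push_Sigma_1}. Since $c\neq 0$, every $y_s$ whose index $s$ is neither of the form $0^n$ nor of the form $1^n$ satisfies $\chi(y_s)=c\neq 0$, so I would take $J$ to consist of exactly these $y_s$. Relation~(LM4) makes $y_s$ and $y_t$ commute whenever their indices are incomparable in the prefix order on $2^{<\N}$, and $y_{01},y_{10}\in J$ are themselves incomparable; moreover any $y_s\in J$ whose index begins with $0$ is incomparable with $10$, while any one whose index begins with $1$ is incomparable with $01$. Hence $y_{01}$ and $y_{10}$ serve as hubs making $C(J)$ connected.

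To check domination: by~(LM3), $y_t$ commutes with $x_s$ whenever $t.x_s=t$, which is certainly the case when $s$ and $t$ are incomparable; and by~(LM4) two $y$'s commute exactly when their indices are incomparable. Consequently any generator $x_s$ or $y_t$ whose address is different from $\emptyset$ commutes with at least one of $y_{01},y_{10}\in J$ (chosen according to whether the address begins with $0$ or with $1$). The stubborn generators are the two ``root-level'' generators $x_\emptyset$ and $y_\emptyset$: since the empty address is a prefix of every string, no element of $J$ is incomparable with it, and a direct LM3-calculation shows that no $y_s\in\LMbig$ with $s\neq\emptyset$ commutes with $x_\emptyset$ at all. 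So Koban--Wong on its own cannot finish.

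To get past this obstacle, I would invoke Citation~\ref{cit:push_Sigma_1} applied to the HNN decomposition (HNN7), $\LMbig\cong\LMright*_{y_0^{-1}}$. A short character computation shows that $\chi|_{\LMright}$ equals $b\psi_1+c\psi$, which is nonzero because $c\neq 0$; so it is enough to prove the analogous statement $[\,\chi|_{\LMright}\,]\in\Sigma^1(\LMright)$, and then Cit~\ref{cit:push_Sigma_1} lifts it to $[\chi]\in\Sigma^1(\LMbig)$. Iterating through (HNN4), $\LMright\cong\LMsmall*_{y_1}$, the problem reduces to showing $[c\psi]\in\Sigma^1(\LMsmall)$ for $c\neq 0$, and in $\LMsmall$ the root-level $y$'s $y_\emptyset, y_{0^n}, y_{1^n}$ have all been removed from the generating set, leaving only $x_\emptyset$ to be dealt with. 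The hardest step of the whole argument is therefore handling $x_\emptyset$ at this base level: its centralizer in $\LMsmall$ contains no obvious $c\psi$-hyperbolic element, so the resolution likely requires either producing such an element explicitly (using the conjugation identities of Lemma~\ref{lem:weird_conj} and the strand-diagram calculus of Section~\ref{sec:strands}) and inserting it into $J$ with an appropriate connecting edge, or else bypassing $x_\emptyset$ through a further HNN reduction via (HNN1) or (HNN2), in which $x_\emptyset$ plays the role of the stable letter.
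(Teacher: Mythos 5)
Your setup is right---Citation~\ref{cit:conn_dom} with $J=\{y_s\mid s\neq 0^n,1^n\}$ is exactly the paper's choice, and you correctly identify that the whole difficulty is the generator $x_\emptyset$, which commutes with no element of $J$. But you then leave that difficulty unresolved (``the resolution likely requires either\dots or else\dots''), and neither of your two suggested escape routes closes the gap. The paper's fix is a one-line change of generating set: take $\{x_\emptyset x_1^{-1},\,x_1,\,y_0,\,y_{10},\,y_1\}$ instead of a set containing $x_\emptyset$. A direct computation with the definition of $x$ shows that $x_\emptyset x_1^{-1}$ fixes every sequence beginning with $11$, so it is supported off the cone at $11$ and hence commutes with $y_{110}\in J$; the remaining four generators commute with $y_{01}$, $y_{10}$, $y_{01}$, $y_{01}$ respectively, and Citation~\ref{cit:conn_dom} finishes the proof in one step. (Note also that $y_\emptyset$ is not an obstruction at all: by (LM5) it is redundant and simply need not appear in the generating set.)

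Your alternative route cannot terminate. Every base subgroup $B$ appearing in (HNN1)--(HNN8) contains $y$-generators, so $\psi|_B\neq 0$; consequently Citation~\ref{cit:hnn_bns} never applies to a character with nonzero $\psi$-coefficient, and each application of Citation~\ref{cit:push_Sigma_1} merely replaces the claim ``$[\chi]\in\Sigma^1(H)$'' by the claim ``$[\chi|_B]\in\Sigma^1(B)$'' for another Lodha--Moore group $B$ on which the $\psi$-coefficient is still nonzero --- e.g.\ (HNN1) followed by (HNN5) brings you straight back to needing $[c\psi]\in\Sigma^1(\LMsmall)$. The class $[\pm\psi]$ is precisely the one that must be handled by a direct centralizer argument somewhere, and that is what the generator swap $x_\emptyset\rightsquigarrow x_\emptyset x_1^{-1}$ accomplishes. (Incidentally, the criterion in Citation~\ref{cit:conn_dom} is due to Koban--McCammond--Meier, not Koban--Wong.)
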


\begin{proof}
 We will apply Citation~\ref{cit:conn_dom}. Let $J\defeq \{y_s \mid s\neq 0^n,1^n\}$, so every element of $J$ is $\psi$-hyperbolic. Also, every element of $J$ is in the kernels of $\psi_0$ and $\psi_1$, so in fact they are all $\chi$-hyperbolic. Note that $[y_{0t},y_{1u}]=1$ for any $t,u$, so $C(J)$ is connected. It remains to show that some generating set of $\LMbig$ is dominated by $J$, and we will use the generators $x_\emptyset x_1^{-1},x_1,y_0,y_{10},y_1$. These generators respectively commute with the $\chi$-hyperbolic elements $y_s$ for $s=110$, $01$, $10$, $01$ and $01$.
\end{proof}

\begin{proposition}[Most of equator]\label{prop:big_equator}
 Let $\chi = a\psi_0+b\psi_1+c\psi$ be a character of $\LMbig$ with $c=0$, and $a\neq0$ and $b\neq0$. Then $[\chi]\in\Sigma^1(\LMbig)$.
\end{proposition}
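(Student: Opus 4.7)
The plan is to apply Citation~\ref{cit:conn_dom} again, in the same spirit as in Proposition~\ref{prop:big_hemispheres}, but with a set of $\chi$-hyperbolic elements tailored to $\chi = a\psi_0 + b\psi_1$ (now with $c=0$). Since $\chi(y_s)=0$ for $s\neq 0^n,1^n$, the only $y$-type hyperbolic elements available in $\LMbig$ are the $y_{0^n}$, on which $\chi$ takes the value $a$, and the $y_{1^n}$, on which $\chi$ takes the value $b$. I would therefore set
$$J\defeq \{y_{0^n}\mid n\ge 1\} \cup \{y_{1^n}\mid n\ge 1\}\text{,}$$
which consists entirely of $\chi$-hyperbolic elements precisely because both $a$ and $b$ are nonzero.

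By (LM4), $y_{0^n}$ and $y_{1^m}$ commute for all $n,m\ge 1$, since neither $0^n$ nor $1^m$ is a prefix of the other. So $C(J)$ contains every edge between the two families and is a complete bipartite graph, which is connected. Within either family no two elements commute, since one power is always a prefix of the other; this is why the argument genuinely requires both $a\neq 0$ and $b\neq 0$, and why losing either coefficient would remove one side of the bipartition and destroy connectivity.

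To finish, I would reuse the generating set $\{x_\emptyset x_1^{-1}, x_1, y_0, y_1, y_{10}\}$ from the previous proposition and check that $J$ dominates it. The element $y_0\in J$ commutes (via (LM4)) with $y_1$, $y_{10}$ and itself, and commutes with $x_1$ since $x_1$ fixes the word $0$ in the partial action on $2^{<\N}$, so (LM3) yields $y_0 x_1 = x_1 y_0$. The remaining generator $x_\emptyset x_1^{-1}$ fixes every string beginning with $11$, and in particular fixes the node $11$, so two applications of (LM3) show it commutes with $y_{11}\in J$. The only real obstacle I anticipate is identifying this $J$ in the first place: either family $\{y_{0^n}\}$ or $\{y_{1^n}\}$ on its own has a commuting graph with no edges, and it is precisely the cross-interaction between the two families, available only when both $a,b\ne0$, that rescues connectivity of $C(J)$ while keeping $\chi$-hyperbolicity intact.
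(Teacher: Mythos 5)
Your proposal is correct and follows essentially the same route as the paper: the same set $J=\{y_{0^n},y_{1^n}\mid n>0\}$ of $\chi$-hyperbolic elements with connected (bipartite) commuting graph, the same generating set $\{x_\emptyset x_1^{-1},x_1,y_0,y_{10},y_1\}$, and the same witnesses for domination (in particular $y_{11}$ for $x_\emptyset x_1^{-1}$ and $y_0$ for $x_1$), so Citation~\ref{cit:conn_dom} applies exactly as in the paper's proof.
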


\begin{proof}
 Let $J\defeq \{y_{0^n},y_{1^n} \mid n>0\}$, so every element is $\chi$-hyperbolic. Every $y_{0^n}$ commutes with every $y_{1^m}$ for $n,m>0$, so $C(J)$ is connected. The generators $x_\emptyset x_1^{-1},x_1,y_0,y_{10},y_1$ respectively commute with the $\chi$-hyperbolic elements $y_s$ for $s=11$, $0$, $1$, $0$ and $0$.
\end{proof}

\begin{lemma}[Remaining four points]\label{lem:big_poles}
 We have $[-\psi_0],[\psi_1]\in\Sigma^1(\LMbig)$ and $[\psi_0],[-\psi_1] \in \Sigma^1(\LMbig)^c$.
\end{lemma}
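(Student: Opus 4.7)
The plan is to apply Citation~\ref{cit:hnn_bns} to the two non-$F$-like HNN decompositions (HNN7) and (HNN8) of $\LMbig$, and read off all four claims directly. The hard work of setting up these strictly ascending HNN extensions was already done in Section~\ref{sec:HNN}; what remains is essentially bookkeeping about which character vanishes on which base subgroup and what value it takes on the stable letter.

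For (HNN7), $\LMbig \cong \LMright *_{y_0^{-1}}$, the natural candidate is the character $-\psi_0$. Since $\psi_0$ kills every $x_s$ and every $y_s$ with $s \neq 0^n$, it vanishes on the entire base $\LMright = \langle x_s, y_t : t \notin \{0^n\}_{n \in \N_0}\rangle$; moreover $-\psi_0(y_0^{-1}) = +1$, so the stable letter maps to $1$ as required. Citation~\ref{cit:hnn_bns} together with strict ascendingness of the HNN extension then immediately yields both $[-\psi_0] \in \Sigma^1(\LMbig)$ and $[\psi_0] \in \Sigma^1(\LMbig)^c$.

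For (HNN8), $\LMbig \cong \LMleft *_{y_1}$, the analogous candidate is $\psi_1$ itself: by definition it vanishes on every $x_s$ and on every $y_s$ with $s \neq 1^n$, so it vanishes on $\LMleft$, and $\psi_1(y_1) = 1$. Citation~\ref{cit:hnn_bns} then yields $[\psi_1] \in \Sigma^1(\LMbig)$ and $[-\psi_1] \in \Sigma^1(\LMbig)^c$.

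There is no serious obstacle here, since all the geometric content has been absorbed into Citation~\ref{cit:hnn_bns} and into the construction of the HNN decompositions in Section~\ref{sec:HNN}. The only point deserving any care is the sign convention in (HNN7), where the stable element is $y_0^{-1}$ rather than $y_0$; this is precisely why it is $-\psi_0$ (and not $\psi_0$) that lies in the invariant, and getting this sign right is what pairs up the ``$+$'' and ``$-$'' sides correctly in the statement of the lemma.
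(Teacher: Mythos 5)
Your proposal is correct and is essentially identical to the paper's own proof: both apply Citation~\ref{cit:hnn_bns} to (HNN7) with the character $-\psi_0$ and to (HNN8) with $\psi_1$, using strict ascendingness to place $[\psi_0]$ and $[-\psi_1]$ in the complement. The remark about the sign coming from the stable letter being $y_0^{-1}$ is exactly the right point of care.
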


\begin{proof}
 From (HNN7) and (HNN8) we know that $\LMbig = (\LMright)*_{y_0^{-1}}$ and $\LMbig = (\LMleft)*_{y_1}$. Note that $-\psi_0(\LMright)=0$ and $-\psi_0(y_0^{-1})=1$. Similarly $\psi_1(\LMleft)=0$ and $\psi_1(y_1)=1$. The result now follows from Citation~\ref{cit:hnn_bns}.
\end{proof}

These three results combine to yield:

\begin{corollary}
 $\Sigma^1(\LMbig)^c = \{[\psi_0],[-\psi_1]\}$. \qed
\end{corollary}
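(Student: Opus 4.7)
The plan is to assemble the three preceding results into a complete sweep of the character sphere $S(\LMbig) = S^2$. Since the abelianization of $\LMbig$ is $\Z^3$ with basis dual to $\psi_0,\psi_1,\psi$ (by Lemma~\ref{lem:abln_gens} and the subsequent corollary), every character admits a unique expression $\chi = a\psi_0 + b\psi_1 + c\psi$ with $(a,b,c)\in\R^3$, and its class $[\chi]$ is determined by $(a,b,c)$ up to positive scaling. So I would partition $S^2$ according to which coordinates vanish and hit each piece with the appropriate result.

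First, I would handle the open ``hemispheres'' $c\neq 0$ via Proposition~\ref{prop:big_hemispheres}, which places all such $[\chi]$ in $\Sigma^1(\LMbig)$. Next, on the equator $c=0$, I would split into the generic part ($a\neq 0$ and $b\neq 0$), which lies in $\Sigma^1(\LMbig)$ by Proposition~\ref{prop:big_equator}, and the four remaining points where one of $a,b$ also vanishes. Those four points are exactly $[\pm\psi_0]$ and $[\pm\psi_1]$, whose memberships are resolved by Lemma~\ref{lem:big_poles}: $[-\psi_0]$ and $[\psi_1]$ lie in $\Sigma^1(\LMbig)$, while $[\psi_0]$ and $[-\psi_1]$ lie in its complement.

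Combining the three steps, every class of $S(\LMbig)$ has been shown to lie either in $\Sigma^1(\LMbig)$ or in the two-point set $\{[\psi_0],[-\psi_1]\}$, and Lemma~\ref{lem:big_poles} guarantees these two points are not in $\Sigma^1(\LMbig)$. Hence $\Sigma^1(\LMbig)^c = \{[\psi_0],[-\psi_1]\}$, as claimed. There is no real obstacle here; the content was all carried by the hemisphere, equator, and pole computations, and the corollary is simply the bookkeeping step that verifies the three cases exhaust $S^2$ and have the stated intersection with the complement.
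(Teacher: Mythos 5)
Your proposal is correct and is exactly the argument the paper intends: the corollary is stated as an immediate combination of Proposition~\ref{prop:big_hemispheres}, Proposition~\ref{prop:big_equator}, and Lemma~\ref{lem:big_poles}, which together exhaust $S(\LMbig)$ via the case split on whether $c$, $a$, $b$ vanish. Your bookkeeping of the four residual points as $[\pm\psi_0],[\pm\psi_1]$ and the appeal to Lemma~\ref{lem:big_poles} for their membership matches the paper's (unwritten) proof precisely.
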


\medskip

Next we consider $\LMright$. The cases proceed essentially the same as for $\LMbig$, except now the first case is shorter, thanks to having already handled $\LMbig$.

\begin{lemma}[North/south hemispheres]\label{lem:right_hemispheres}
 Let $\chi=a\chi_0+b\psi_1+c\psi$ be a character of $\LMright$ with $c\neq0$. Then $[\chi] \in \Sigma^1(\LMright)$.
\end{lemma}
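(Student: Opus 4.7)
The plan is to sidestep a direct ``hyperbolics dominating generators'' argument by piggybacking on the already-computed $\Sigma^1(\LMbig)$, using the HNN decomposition (HNN3), which gives $\LMright\cong(\LMbig)*_{x_\emptyset}$. Here the base subgroup $\LMbig$ is identified with the semi-deferred copy $\LMright(1)\le\LMright$ via $y_t\mapsto y_{1t}$, $x_s\mapsto x_{1s}$. The engine that lets us transfer information from base to total group is Citation~\ref{cit:push_Sigma_1}: it suffices to show that the pullback of $\chi$ to $\LMbig$ is nonzero and lies in $\Sigma^1(\LMbig)$.

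First I would compute this pullback. Under the identification above, $x_s\in\LMbig$ corresponds to $x_{1s}\in\LMright$, and since $1s$ never has the form $0^n$, the character $\chi_0$ vanishes on all $x_{1s}$; hence the $a\chi_0$ component of $\chi$ contributes nothing. Evaluating the pullback on the generators $\overline{y}_0,\overline{y}_1,\overline{y}_{10}$ of $\LMbig^{ab}$ then just amounts to reading off $\chi(y_{10})=c$, $\chi(y_{11})=b+c$, and $\chi(y_{110})=c$, which together force the pullback to be $b\psi_1+c\psi\in\Hom(\LMbig,\R)$.

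Next I would apply the computation of $\Sigma^1(\LMbig)$ already established above: its complement consists of exactly the two points $[\psi_0]$ and $[-\psi_1]$. Since $c\neq 0$, the character $b\psi_1+c\psi$ is nonzero, is not a positive scalar multiple of $\psi_0$ (that would force $c=0$), and is not a positive scalar multiple of $-\psi_1$ (that would also force $c=0$). Therefore $[\chi|_{\LMbig}]\in\Sigma^1(\LMbig)$, and Citation~\ref{cit:push_Sigma_1} applied to $\LMright=(\LMbig)*_{x_\emptyset}$ yields $[\chi]\in\Sigma^1(\LMright)$.

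I do not expect a substantial obstacle here: the only thing to be careful about is making sure the restriction-and-pullback identification of the character is done correctly, and in particular that the ``missing'' $\psi_0$-component on the $\LMbig$ side genuinely comes out to zero (which it does, precisely because $\chi_0$ is supported on $\{x_{0^n}\}$ and all of those lie outside the semi-deferred copy $\LMright(1)$). This is exactly the shortcut foreshadowed by the authors' remark that, having handled $\LMbig$, the corresponding case for $\LMright$ becomes much shorter.
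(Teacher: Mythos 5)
Your proposal is correct and is essentially the paper's own argument: both use (HNN3) to write $\LMright=(\LMbig)*_{x_\emptyset}$, note that the restriction of $\chi$ to the base copy of $\LMbig$ has non-zero $\psi$ coefficient (hence lies in $\Sigma^1(\LMbig)$ by Proposition~\ref{prop:big_hemispheres}), and conclude via Citation~\ref{cit:push_Sigma_1}. Your explicit verification that the pullback is $b\psi_1+c\psi$ is a nice bit of added care but not a different method.
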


\begin{proof}
 From (HNN3) we have $\LMright = (\LMbig)*_{x_\emptyset}$. The restriction of $\chi$ to $\LMbig$ has non-zero $\psi$ coefficient. The result now follows from Proposition~\ref{prop:big_hemispheres} and Citation~\ref{cit:push_Sigma_1}.
\end{proof}

\begin{proposition}[Most of equator]\label{prop:right_equator}
 Let $\chi = a\chi_0+b\psi_1+c\psi$ be a character of $\LMright$ with $c=0$, and $a\neq0$ and $b\neq0$. Then $[\chi]\in\Sigma^1(\LMright)$.
\end{proposition}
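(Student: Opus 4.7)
The plan is to apply Citation~\ref{cit:conn_dom} in direct analogy with Proposition~\ref{prop:big_equator}, but with a smaller dominating set $J$ adapted to the fact that $\LMright$ does not contain the generators $y_{0^n}$ used there. I need to exhibit a set $J$ of $\chi$-hyperbolic elements whose commuting graph is connected, together with a finite generating set of $\LMright$ dominated by $J$.

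Since $c=0$ we have $\chi=a\chi_0+b\psi_1$, so the candidates $x_0$ (with $\chi(x_0)=-a\neq 0$) and $y_{11}$ (with $\chi(y_{11})=b\neq 0$) are both $\chi$-hyperbolic, using that $a,b\neq 0$. Their supports lie in the disjoint subtrees rooted at $0$ and at $11$, so $[x_0,y_{11}]=1$ and $C(\{x_0,y_{11}\})$ is connected.

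For the dominating step I will use the finite generating set $\{x_\emptyset x_1^{-1},x_1,y_1,y_{10}\}$ of $\LMright$, which generates by Lemma~\ref{lem:weird_hnn} together with the identity $x_\emptyset=(x_\emptyset x_1^{-1})x_1$. The generators $x_1$, $y_1$ and $y_{10}$ all have support contained in the $1$-subtree, hence commute with $x_0$. For the remaining generator $x_\emptyset x_1^{-1}$ I will verify that it commutes with $y_{11}$ by using relation (LM3) to compute $y_{11}^{x_\emptyset}=y_{11.x_\emptyset}=y_{111}$ and $y_{11}^{x_1}=y_{11.x_1}=y_{111}$, so that conjugation by $x_\emptyset x_1^{-1}$ fixes $y_{11}$. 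Then $J=\{x_0,y_{11}\}$ dominates the generating set, and Citation~\ref{cit:conn_dom} yields $[\chi]\in\Sigma^1(\LMright)$.

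There is no substantial obstacle. The only subtle point is that, unlike for $\LMbig$, we cannot use a $y_{0^n}$ as our second hyperbolic element because those generators are absent from $\LMright$; but $x_0$ plays exactly the same structural role — disjoint support from $y_{11}$, commuting with every generator in the $1$-subtree, and $\chi$-hyperbolic as soon as $a\neq 0$ — and this substitution is precisely what makes the argument go through.
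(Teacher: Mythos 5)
Your proof is correct and follows essentially the same route as the paper's: apply Citation~\ref{cit:conn_dom} with $\chi$-hyperbolic elements supported in the disjoint subtrees at $0$ and at $1$, dominating the generating set $x_\emptyset x_1^{-1},x_1,y_1,y_{10}$ (the paper takes the infinite set $J=\{x_{0^n},y_{1^n}\mid n>0\}$, but your two-element $J=\{x_0,y_{11}\}$ does the same job). The verification that $x_\emptyset x_1^{-1}$ centralizes $y_{11}$ via (LM3) is exactly the computation implicit in the paper's proof.
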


\begin{proof}
 We apply Citation~\ref{cit:conn_dom}. Let $J\defeq \{x_{0^n},y_{1^n} \mid n>0\}$, so every element is $\chi$-hyperbolic. Every $x_{0^n}$ commutes with every $y_{1^m}$ for $n,m>0$, so $C(J)$ is connected. The generators $x_\emptyset x_1^{-1},x_1,y_0,y_{10},y_1$ respectively commute with the $\chi$-hyperbolic elements $y_{11}$, $x_0$, $y_1$, $x_0$ and $x_0$.
\end{proof}

\begin{lemma}[Remaining four points]\label{lem:right_poles}
 We have $[-\chi_0],[\psi_1]\in\Sigma^1(\LMright)$ and $[\chi_0],[-\psi_1]\in\Sigma^1(\LMright)^c$.
\end{lemma}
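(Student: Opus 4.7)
The plan is to mimic the proof of Lemma~\ref{lem:big_poles} verbatim, replacing the decompositions (HNN7) and (HNN8) with their analogues for $\LMright$, namely (HNN3) $\LMright\cong(\LMbig)*_{x_\emptyset}$ and (HNN4) $\LMright\cong\LMsmall*_{y_1}$. Both pairs of antipodal characters then fall straight out of Citation~\ref{cit:hnn_bns}.

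For the pair $[\pm\chi_0]$ I would use (HNN3). The base there is the semi-deferred subgroup $\LMright(1)\cong\LMbig$, generated by the $x_{1s}$ and $y_{1s}$ for $s\in 2^{<\N}$. By definition $\chi_0$ is non-zero only on generators of the form $x_{0^n}$, so it vanishes on every generator of $\LMright(1)$, while $\chi_0(x_\emptyset)=-1$. Applying Citation~\ref{cit:hnn_bns} to the character $-\chi_0$ (which now takes $x_\emptyset$ to $1$ and vanishes on the base) gives $[-\chi_0]\in\Sigma^1(\LMright)$, and strict ascension of (HNN3) upgrades this to $[\chi_0]\in\Sigma^1(\LMright)^c$.

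For the pair $[\pm\psi_1]$ I would use (HNN4), whose base is the natural copy of $\LMsmall$ inside $\LMright$. Since $\LMsmall$ is generated by the $x_s$ (all $s$) and the $y_t$ with $t\notin\{0^n,1^n\}$, and since $\psi_1$ is zero on every $x_s$ and on every $y_t$ with $t\neq 1^n$, the restriction $\psi_1|_{\LMsmall}$ is identically zero; meanwhile $\psi_1(y_1)=1$. Citation~\ref{cit:hnn_bns} then yields $[\psi_1]\in\Sigma^1(\LMright)$ and, by strict ascension of (HNN4), $[-\psi_1]\in\Sigma^1(\LMright)^c$.

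I do not expect a genuine obstacle: the real work is already done in Section~\ref{sec:HNN} (where (HNN3) and (HNN4) are exhibited and shown to be strictly ascending) and in Section~\ref{sec:chars} (where $\chi_0$ and $\psi_1$ are shown to be well defined on $\LMright$). The only routine check left is that $\chi_0$ vanishes on $\LMright(1)$ and that $\psi_1$ vanishes on $\LMsmall$, both of which are immediate from the generating sets above. One small sanity point worth flagging is that one must apply Citation~\ref{cit:hnn_bns} to $-\chi_0$ rather than $\chi_0$ (because $\chi_0(x_\emptyset)=-1$, not $+1$), which is what accounts for the sign asymmetry in the statement.
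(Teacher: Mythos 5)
Your proposal is correct and is essentially identical to the paper's own proof: both apply Citation~\ref{cit:hnn_bns} to the strictly ascending decompositions (HNN3) and (HNN4), checking that $-\chi_0$ vanishes on the base $\LMright(1)\cong\LMbig$ with $-\chi_0(x_\emptyset)=1$, and that $\psi_1$ vanishes on $\LMsmall$ with $\psi_1(y_1)=1$. The sign point you flag about applying the citation to $-\chi_0$ rather than $\chi_0$ is exactly the same bookkeeping the paper performs.
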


\begin{proof}
 By (HNN3) and (HNN4) we know that $\LMright = (\LMbig)*_{x_\emptyset}$ and $\LMright = \LMsmall*_{y_1}$. Note that $-\chi_0(\LMbig)=0$ (since here $\LMbig$ means $\LMright(1)$) and $-\chi_0(x_\emptyset)=1$. Similarly $\psi_1(\LMsmall)=0$ and $\psi_1(y_1)=1$. The result now follows from Citation~\ref{cit:hnn_bns}.
\end{proof}

\begin{corollary}
 $\Sigma^1(\LMright)^c = \{[\chi_0],[-\psi_1]\}$. \qed
\end{corollary}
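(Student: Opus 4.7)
The plan is to exhaust the character sphere $S(\LMright) \cong S^2$ by a case analysis on the coefficients in the decomposition $\chi = a\chi_0 + b\psi_1 + c\psi$ and invoke the three preceding results, each of which has already done the real work. Since $\LMright^{ab} \cong \Z^3$ with basis dual to $\chi_0,\psi_1,\psi$ (see the corollary following Lemma~\ref{lem:abln_gens} together with the observation on which characters are defined on $\LMright$), every character has a unique such expression, and the equivalence class $[\chi]$ depends only on the ray spanned by $(a,b,c)\in\R^3\setminus\{0\}$.

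I would first dispose of all classes with $c\neq 0$ by quoting Lemma~\ref{lem:right_hemispheres}; this handles the two open hemispheres $\{c>0\}$ and $\{c<0\}$ of $S^2$. What remains is the equator $\{c=0\}$, a circle parameterized by $(a,b)$. On the open arcs where $a\neq 0$ and $b\neq 0$, Proposition~\ref{prop:right_equator} places $[\chi]$ in $\Sigma^1(\LMright)$. The only classes left unaccounted for are the four "axial" points $[\pm\chi_0]$ and $[\pm\psi_1]$.

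These final four points are settled by Lemma~\ref{lem:right_poles}, which identifies $[-\chi_0],[\psi_1]\in\Sigma^1(\LMright)$ and $[\chi_0],[-\psi_1]\in\Sigma^1(\LMright)^c$. Assembling the three cases, $\Sigma^1(\LMright)^c$ contains exactly $[\chi_0]$ and $[-\psi_1]$ and nothing else, which is the claim.

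There is no real obstacle here: the corollary is a bookkeeping assembly of Lemma~\ref{lem:right_hemispheres}, Proposition~\ref{prop:right_equator} and Lemma~\ref{lem:right_poles}, and the only thing to check is that these three cases together cover all of $S^2$, which they do by the partition $\{c\neq 0\}\sqcup\{c=0,\,ab\neq 0\}\sqcup\{c=0,\,ab=0\}$.
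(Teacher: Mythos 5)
Your assembly is exactly the paper's intended argument: the corollary is stated with a \qed precisely because it is the immediate combination of Lemma~\ref{lem:right_hemispheres}, Proposition~\ref{prop:right_equator} and Lemma~\ref{lem:right_poles}, via the same partition of $S(\LMright)$ by the coefficients $(a,b,c)$. Nothing is missing.
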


\medskip

Parallel arguments applied to $\LMleft$ and $\LMsmall$ now quickly give us:

\begin{corollary}
 $\Sigma^1(\LMleft)^c = \{[\psi_0],[\chi_1]\}$ and $\Sigma^1(\LMsmall)^c = \{[\chi_0],[\chi_1]\}$. \qed
\end{corollary}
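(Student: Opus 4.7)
The plan is to run three parallel copies of the arguments already carried out for $\LMbig$ and $\LMright$ (Propositions~\ref{prop:big_hemispheres} and~\ref{prop:big_equator}, Lemmas~\ref{lem:big_poles},~\ref{lem:right_hemispheres},~\ref{lem:right_poles}, and Proposition~\ref{prop:right_equator}), partitioning the $2$-sphere $S(H)$ into three regions for each $H\in\{\LMleft,\LMsmall\}$: the two open hemispheres $c\neq 0$, the open equator $c=0$ with the other two coefficients nonzero, and the four remaining points on the equator. Throughout, I parameterize characters of $\LMleft$ as $\chi = a\psi_0 + b\chi_1 + c\psi$ and characters of $\LMsmall$ as $\chi = a\chi_0 + b\chi_1 + c\psi$.

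For the hemispheres I would invoke Citation~\ref{cit:push_Sigma_1} together with the already-established $\Sigma^1(\LMbig)$. Using (HNN6), $\LMleft\cong(\LMbig)*_{x_\emptyset^{-1}}$, the restriction of $\chi$ to the semi-deferred copy $\LMleft(0)\cong\LMbig$ corresponds to $a\psi_0+c\psi$ (since $\chi_1$ vanishes on every $x_{0s}$ and $y_{0s}$), which has nonzero $\psi$-coefficient; Proposition~\ref{prop:big_hemispheres} then puts $[\chi|_{\LMbig}]\in\Sigma^1(\LMbig)$, and Citation~\ref{cit:push_Sigma_1} promotes this to $[\chi]\in\Sigma^1(\LMleft)$. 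For $\LMsmall$ I would apply the same step twice: either push down to $\LMleft$ via (HNN1) or to $\LMright$ via (HNN2), using the $\LMleft$ result just obtained (or Lemma~\ref{lem:right_hemispheres}) to cover every character with $c\neq 0$.

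For the equator pieces I would apply Citation~\ref{cit:conn_dom} directly, mimicking Propositions~\ref{prop:big_equator} and~\ref{prop:right_equator}. For $\LMleft$ with $a,b\neq 0$ and $c=0$, take $J \defeq \{y_{0^n},x_{1^n}\mid n\geq 1\}$: every $y_{0^n}$ is $\chi$-hyperbolic with value $a$, every $x_{1^n}$ has value $b$, and each $y_{0^n}$ commutes with each $x_{1^m}$, so $C(J)$ is connected. Then check that $J$ dominates a finite generating set (e.g.\ $x_\emptyset x_0^{-1}$, $x_0$, $y_0$, $y_{10}$), with the commuting partners chosen from opposite subtrees. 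For $\LMsmall$ the analogous set $J\defeq\{x_{0^n},x_{1^n}\mid n\geq 1\}$ works, with the standard generators $x_\emptyset x_1^{-1}, x_1, y_{10}$ each commuting with some $x_{0^n}$ or $x_{1^n}$.

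The four remaining axis points are handled by Citation~\ref{cit:hnn_bns}. For $\LMleft$: (HNN5) gives $\LMleft\cong\LMsmall*_{y_0^{-1}}$, and $-\psi_0$ vanishes on $\LMsmall$ while $-\psi_0(y_0^{-1})=1$, yielding $[-\psi_0]\in\Sigma^1(\LMleft)$ and $[\psi_0]\in\Sigma^1(\LMleft)^c$; (HNN6) gives $\LMleft\cong(\LMbig)*_{x_\emptyset^{-1}}$, and $-\chi_1$ vanishes on $\LMleft(0)\cong\LMbig$ while $-\chi_1(x_\emptyset^{-1})=1$ (since $\chi_1(x_\emptyset)=\chi_1(x_0)+\chi_1(x_1)=1$), giving $[-\chi_1]\in\Sigma^1(\LMleft)$ and $[\chi_1]\in\Sigma^1(\LMleft)^c$. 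For $\LMsmall$, (HNN1) and (HNN2) are used identically with the pairs $(-\chi_0, x_\emptyset)$ and $(-\chi_1, x_\emptyset^{-1})$ to exhibit $[-\chi_0],[-\chi_1]\in\Sigma^1(\LMsmall)$ and $[\chi_0],[\chi_1]\in\Sigma^1(\LMsmall)^c$. No step here presents a genuine obstacle since everything is routine bookkeeping given the HNN decompositions; the only mild care needed is tracking which subgroup a character actually vanishes on (e.g.\ remembering that the ``$\LMbig$'' in (HNN6) is the embedded $\LMleft(0)$, not $\LMleft$ itself), and choosing signs correctly so that the stable element has character value $+1$.
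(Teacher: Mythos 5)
Your proposal is correct and is essentially the paper's own argument: the paper proves this corollary simply by invoking ``parallel arguments'' to those given for $\LMbig$ and $\LMright$, and your write-up is a faithful (and correctly detailed) execution of exactly those parallels --- pushing hemispheres up through the appropriate HNN decompositions, handling the open equator with Citation~\ref{cit:conn_dom} via commuting hyperbolic sets split between the two subtrees, and settling the four axis points with Citation~\ref{cit:hnn_bns}. All the specific choices check out, e.g.\ $x_\emptyset x_0^{-1}$ is the identity on the interval at address $000$ and hence commutes with $y_{0^3}\in J$, so your domination claims hold.
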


This finishes the proof of Theorem~\ref{thrm:BNS}.

Note that $\Sigma^1(F)=S(F)\setminus\{[\chi_0],[\chi_1]\}$, so for $\LMsmall$ we now know that $[\chi]\in\Sigma^1(\LMsmall)$ if and only if $\chi(F)=0$ or else $[\chi|_F]\in\Sigma^1(F)$. We can phrase this as, the inclusion $F\hookrightarrow\LMsmall$ induces an isomorphism in $\Sigma^1(-)^c$.

\section{The BNSR-invariants}\label{sec:bnsr}

Bieri and Renz \cite{bieri88} extended the invariant $\Sigma^1$ to a family of invariants $\Sigma^m$ for $m\in\N$. These form a nested chain of subsets of the character sphere $S(H)$ of a group $H$, namely
$$S(H)\supseteq \Sigma^1(H) \supseteq \Sigma^2(H) \supseteq \cdots$$
with $\Sigma^\infty(H) \defeq \bigcap\limits_{m\in\N}\Sigma^m(H)$. The invariants $\Sigma^m(H)$ are called the \emph{Bieri--Neumann-Strebel--Renz (BNSR) invariants} of $H$.

In this section we show that a full computation of all the $\Sigma^m$ for the Lodha--Moore groups would follow from a single conjecture, namely that $\ker(\psi)$ is of type $\F_\infty$. There is evidence that this is true, but it appears to be significantly more difficult than proving that the Lodha--Moore groups themselves are of type $\F_\infty$, which was already quite difficult in \cite{lodha14}. We can at least prove $\ker(\psi)$ is finitely presented; this allows us to fully compute $\Sigma^2$ of all the Lodha--Moore groups.

We will make almost no use of the definition itself, but we state it here for completeness; see \cite[Section~1.2]{bieri10}.

\begin{definition}[BNSR-invariants]\label{def:bnsr}
 Let $H$ be a group with finite presentation $\langle S\mid R\rangle$. Let $\Gamma$ be the Cayley graph of $H$ with respect to $S$. Pick an $H$-invariant orientation for each edge and glue in a $2$-cell for each relation in $R$, equivariantly along the appropriate loops in the graph. Call the resulting $2$-complex $\Gamma^2$. For a character $\chi$ of $H$ and $n\in\Z$, let $\Gamma^2_{\chi\ge n}$ be the full subcomplex spanned by vertices $h$ with $\chi(h)\ge n$. Now the definition of $\Sigma^2(H)$ is that $[\chi]\in\Sigma^2(H)$ if $\Gamma^2_{\chi\ge 0}$ is connected and there exists $n\le0$ such that the inclusion $\Gamma^2_{\chi\ge 0} \subseteq \Gamma^2_{\chi\ge n}$ induces the trivial map in $\pi_1$.

 For $H$ of type $\F_m$, the definition of $\Sigma^m(H)$ is similar. Let $\Gamma^m$ be the $m$-skeleton of the universal cover of a $K(H,1)$ with finite $m$-skeleton. Then $[\chi]$ is in $\Sigma^m(H)$ if and only if the above inclusion (for some $n$) induces the trivial map in $\pi_k$ for all $k<m$.
\end{definition}

One main application of the BNSR-invariants is the following:

\begin{cit}\cite[Theorem~1.1]{bieri10}\label{cit:bnsr_fin_props}
 Let $H$ be a group of type $\F_m$ and $[H,H]\le N\triangleleft H$. Then $N$ is of type $\F_m$ if and only if for every non-zero character $\chi$ of $H$ with $\chi(N)=0$, we have $[\chi]\in\Sigma^m(H)$.
\end{cit}

In particular the BNSR-invariants of a group of type $\F_\infty$ give a complete catalog of exactly which normal subgroups containing the commutator subgroup have which finiteness properties.

\medskip

The Lodha--Moore groups are all of type $\F_\infty$. Lodha has proved this in a recent preprint on arXiv \cite{lodha14} for $\LMsmall$, and his proof works in parallel for $\LMright$, $\LMleft$ and $\LMbig$. The idea is to construct a contractible space $X$ on which $\LMsmall$ acts with stabilizers of type $\F_\infty$ and with finitely many orbits of cells in each dimension, after which it follows from classical results that $\LMsmall$ is of type $\F_\infty$. The ``hard part'' is proving that $X$ is contractible.

Since we already know that the Lodha--Moore groups are of type $\F_\infty$, and since we have shown earlier that they decompose as nice HNN extensions, it is not too hard to compute a large piece of the BNSR-invariants. The key tools are the following generalizations of Citations~\ref{cit:hnn_bns} and~\ref{cit:push_Sigma_1}.

\begin{cit}\cite[Theorem~2.1]{bieri10}\label{cit:hnn_bnsr}
 Let $H=B*_t$ for $B$ of type $\F_m$ ($m\in\N\cup\{\infty\}$), and let $\chi \colon H\to\R$ be the character given by $\chi(B)=0$ and $\chi(t)=1$. Then $[\chi]\in\Sigma^m(H)$.
\end{cit}

\begin{lemma}\label{lem:push_Sigma_m}
 Let $H$ be a group that is an ascending HNN extension $H=B\ast_t$ for $B$ a subgroup of type $\F_m$ ($m\in\N\cup\{\infty\}$). Let $\chi\colon H\to \R$ be a character. Suppose $\chi|_B \neq 0$ and $[\chi|_B]\in \Sigma^m(B)$. Then $[\chi]\in\Sigma^m(H)$.
\end{lemma}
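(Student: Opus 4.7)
The plan is to imitate the $\Sigma^1$ argument of Citation~\ref{cit:push_Sigma_1} in a higher-dimensional setting, via a geometric model of $K(H,1)$ built from a $K(B,1)$. Choose a $K(B,1)$ space $Y$ with finite $m$-skeleton (possible because $B$ has type $\F_m$) and a cellular realization $f\colon Y\to Y$ of the monomorphism $\phi\colon B\hookrightarrow B$, $\phi(b)=t^{-1}bt$. Form the mapping torus $X=Y\times[0,1]/(y,0)\sim(f(y),1)$. Since $H=B\ast_{\phi,t}$ is ascending, standard Bass--Serre arguments show $X$ is aspherical and hence a $K(H,1)$ with finite $m$-skeleton.

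The universal cover $\widetilde X$ then carries a natural ``sheets and cylinders'' decomposition indexed by the Bass--Serre tree of the splitting: each vertex contributes a sheet $g\widetilde Y$, one per coset $gB\in H/B$, and each edge contributes a cylinder $g\widetilde Y\times[0,1]$ joining two adjacent sheets via the lift of $f$. Because $\chi$ is a homomorphism, its restriction to a sheet $g\widetilde Y$ is $\chi|_B$ translated by the scalar $\chi(g)$, and crossing a cylinder changes $\chi$-values by at most $|\chi(t)|$. The hypotheses $\chi|_B\neq 0$ and $[\chi|_B]\in\Sigma^m(B)$ provide a constant $N\ge 0$ such that, in each sheet, the inclusion of the $\chi$-nonnegative subcomplex into the $\chi\ge-N$ subcomplex is trivial on $\pi_k$ for all $k<m$.

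The core of the argument is to promote these sheet-wise nullhomotopies to a single statement on $\widetilde X^{(m)}$. Given a cellular $k$-sphere $S\subseteq\widetilde X^{(m)}_{\chi\ge 0}$ with $k<m$, I would decompose $S$ along cylinder intersections into pieces, each contained in one sheet (a finite decomposition, by compactness of $S$). An induction on $k$ arranges that the lower-dimensional cylinder intersections already admit compatible fills, using the product structure of the cylinders to propagate fills from one endpoint sheet to the other at a $\chi$-cost of at most $|\chi(t)|$. The sheet-wise $\Sigma^m$-statement then fills each piece inside its sheet's $\chi\ge-N$ part, and the assembled $(k+1)$-disk lies in $\widetilde X^{(m)}_{\chi\ge-N'}$ for an $N'$ depending only on $N$ and $|\chi(t)|$, yielding $[\chi]\in\Sigma^m(H)$.

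The main obstacle is showing the depth $N'$ can be chosen \emph{uniformly} over all $k$-spheres; a priori, a sphere meeting many sheets could require unbounded cost. Resolving this uses the ascending structure of the HNN in two ways: first, to guarantee the mapping torus $X$ is aspherical, and second, to allow a canonical simplification of each sphere using the preferred direction of the Bass--Serre tree, so that the number of cylinder crossings needed in a fill can be controlled by the cellular dimension alone (by $k+1$, say). Once this uniformity is in place, $N'=N+(k+1)|\chi(t)|$ works for all $k$-spheres simultaneously and yields the conclusion.
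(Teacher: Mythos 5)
Your setup (mapping torus of a cellular realization of $\phi$ on a $K(B,1)$ with finite $m$-skeleton, sheets and cylinders indexed by the Bass--Serre tree) is a reasonable geometric model, and you have correctly located the difficulty: one must produce a \emph{single} lag $N'$ that fills every $k$-sphere of $\widetilde X^{(m)}_{\chi\ge 0}$, even though a sphere may spread over arbitrarily many sheets. But the step you offer to resolve this --- that the ascending direction of the Bass--Serre tree lets one ``canonically simplify'' a sphere so that the number of cylinder crossings needed in a fill is bounded by $k+1$ --- is asserted, not proved, and I do not see how it could be true as stated. The natural mechanism for consolidating sheets is the mapping-torus semiflow, which pushes a compact set into a single sheet only after a number of steps equal to the tree-diameter of the set of sheets it meets; that number is unbounded over $k$-spheres for a fixed $k$, and each step shifts $\chi$ by $\chi(t)$, which may be negative. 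Recovering the height lost during this consolidation is exactly where the hypotheses $\chi|_B\neq 0$ and $[\chi|_B]\in\Sigma^m(B)$ must be used quantitatively, and your sketch never engages with that interplay. Similarly, the inductive ``compatible fills along cylinder intersections'' step is the real homotopical content of such gluing theorems once $k\ge 2$; it is not a routine induction on $k$.

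For comparison, the paper does not attempt a direct filling argument at all: it cites Meinert's homological result on $\Sigma^m(\,\cdot\,,\Z)$ for HNN extensions together with his homotopical $\Sigma^2$ theorem, and then invokes the identity $\Sigma^m(H)=\Sigma^2(H)\cap\Sigma^m(H,\Z)$ for $m\ge 2$ to upgrade to the homotopical invariant. That route replaces every filling problem in dimension $\ge 2$ by a Mayer--Vietoris-type homological argument, which is precisely what makes the lemma tractable. If you wish to keep your geometric approach you must either carry out the uniformity and compatibility arguments in full (at which point you would essentially be re-proving Meinert's theorems) or adopt the same reduction to homology plus $\Sigma^2$.
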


\begin{proof}
 This follows in the same way as for the $m=\infty$ case in \cite[Theorem~2.3]{bieri10}, namely by combining \cite[Proposition~4.1]{meinert96} with \cite[Theorem~B]{meinert97} (and observing that for $m\ge2$, $\Sigma^m(H)=\Sigma^2(H)\cap\Sigma^m(H,\Z)$ for any $H$).
\end{proof}

If the following conjecture holds for all $m\in\N\cup\{\infty\}$, then, as we will soon show, we have a full computation of the BNSR-invariants for all four Lodha--Moore groups $\LMsmall$, $\LMright$, $\LMleft$, $\LMbig$.

\begin{conjecture}[$\textrm{C}_m$]\label{conj:ker_psi_F_m}
 The kernel of $\psi$ in some Lodha--Moore group is of type $\F_m$.
\end{conjecture}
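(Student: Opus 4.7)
The plan is to reduce Conjecture $\textrm{C}_m$ to a BNSR-invariant statement and then attack it via discrete Morse theory on Lodha's complex. By Citation~\ref{cit:bnsr_fin_props}, since $\ker(\psi)$ contains the commutator subgroup of any Lodha--Moore group $H$, and since the only classes of real characters of $H$ vanishing on $\ker(\psi)$ are $[\psi]$ and $[-\psi]$, the conjecture is equivalent to the assertion that $[\psi],[-\psi]\in\Sigma^m(H)$. Theorem~\ref{thrm:BNS} supplies this already for $m=1$, and the task is to push it up to higher $m$.

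The natural approach is Bestvina--Brady style Morse theory on the contractible CW complex $X$ constructed by Lodha in \cite{lodha14} to prove the Lodha--Moore groups are of type $\F_\infty$. I would first choose a basepoint $v_0\in X$ whose $H$-stabilizer lies inside $\ker(\psi)$, and then define a height function $h(gv_0)\defeq\psi(g)$ on vertices, extended affinely to higher cells to obtain a cellular, $\psi$-equivariant Morse function $h\colon X\to\R$. Standard Morse arguments then reduce the required $(m-1)$-connectivity of the super- and sub-level sets of $h$ (and hence the desired $\Sigma^m$-memberships) to showing that the ascending and descending links at each vertex of $X$ are $(m-2)$-connected.

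The main obstacle is the link analysis. For Thompson's group $F$ acting on its Stein--Farley complex, the analogous descending links are matching complexes on binary trees, whose connectivity is well understood via the Bux--Fluch--Marschler--Witzel machinery. For the Lodha--Moore groups the links combine this tree data with new \emph{cyclone data} coming from the $y$-generators; the interaction between splits and cyclones governed by relation (LM5) produces genuinely new combinatorics, and showing $(m-2)$-connectivity for arbitrary $m$ appears to require substantial new topological input beyond what is currently in the Thompson-groups literature. This is exactly why the full conjecture is, in my view, genuinely hard.

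For the $m=2$ case treated in the paper, one only needs $0$-connectivity (i.e., plain connectivity) of the descending link, which is much more tractable. Concretely, any two compatible ``local modifications'' of a strand diagram above a given vertex should be connectable by a finite sequence of expansion/reduction moves (cf.\ Figure~\ref{fig:LM_strand_moves}) that preserve the value of $h$, because such moves are precisely consequences of the defining relations (LM1)--(LM5). Combined with the Brown--Bieri--Renz criterion for finite presentability, this would yield $[\pm\psi]\in\Sigma^2(H)$ and hence the finite presentability of $\ker(\psi)$, matching the $\F_2$ claim the paper makes.
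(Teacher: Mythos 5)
Your opening reduction is correct and is exactly how the paper itself frames things: since the Lodha--Moore groups are of type $\F_\infty$ and the only non-zero character classes vanishing on $\ker(\psi)$ are $[\pm\psi]$, Citation~\ref{cit:bnsr_fin_props} makes ($\textrm{C}_m$) equivalent to $[\pm\psi]\in\Sigma^m(H)$. But from there your proposal is a research program, not a proof, and it contains a concrete error in the one case that can actually be settled. First, the Morse function is never constructed: on Lodha's complex the cell stabilizers are nontrivial, so $h(gv_0)=\psi(g)$ is only well defined if $\psi$ vanishes on \emph{every} cell stabilizer (not just at one basepoint), and even then a Brown-type criterion relative to those stabilizers is needed; none of this is addressed. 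Second, and more seriously, your claim that $m=2$ ``only needs $0$-connectivity of the descending link'' is off by one. In Bestvina--Brady style arguments, $(m-1)$-connected ascending and descending links are what yield type $\F_m$ for the kernel (equivalently, essential $(m-1)$-connectivity of the superlevel filtration, which is what the definition of $\Sigma^m$ demands); merely connected links recover only finite generation, i.e.\ $\Sigma^1$, which is already known from Theorem~\ref{thrm:BNS}. For $\Sigma^2$ you would need the links to be \emph{simply} connected, and your heuristic about joining local modifications by sequences of moves from Figure~\ref{fig:LM_strand_moves} addresses $\pi_0$, not $\pi_1$.

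For comparison, the paper's actual proof of ($\textrm{C}_2$) (Section~\ref{sec:Sigma^2}) avoids Morse theory entirely. It takes the explicit $3$-generator, $9$-relation presentation of $\LMsmall$, passes to the infinite cyclic cover of the presentation $2$-complex corresponding to $\ker(\psi)$, reads off a Reidemeister--Schreier presentation with generators $a_n,d_n$ ($n\in\Z$), and then collapses it to a finite presentation by verifying a handful of auxiliary commutation relations such as $[z,ada^{-1}]=1$ in the group. That computation is elementary but unavoidable given current technology; if you want to pursue your link-connectivity route, you should expect it to be at least as hard as Lodha's $\F_\infty$ proof for the ambient groups, which is precisely why the statement is left as a conjecture for $m\ge 3$.
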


Note that for $m\ge\ell$, ($\textrm{C}_m$) implies ($\textrm{C}_\ell$). We have already seen that ($\textrm{C}_1$) is true, since $[\pm\psi]\in\Sigma^1(\LMsmall)$. We have evidence to suggest that ($\textrm{C}_m$) holds for all $m$ (meaning that ($\textrm{C}_\infty$) holds), but verifying it has proved to be significantly more difficult than Lodha's proof that $\LMsmall$ is of type $\F_\infty$, and this was already quite difficult. As such, for the rest of the section we leave these as conjectures and show how, for any $m$, if ($\textrm{C}_m$) is true then the full computation of $\Sigma^m$ falls out. Then at the end we will verify ($\textrm{C}_2$) ``by hand'' and so fully compute $\Sigma^2$ for all four Lodha--Moore groups.

\medskip

\begin{observation}\label{obs:one_for_all}
 For any $m$, if ($\textrm{C}_m$) holds for some Lodha--Moore group $\LMsmall$, $\LMright$, $\LMleft$, $\LMbig$, then it holds for all of them.
\end{observation}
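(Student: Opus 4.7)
The plan is to combine Citation~\ref{cit:bnsr_fin_props}, which translates the finiteness property of $\ker(\psi)$ into a statement about the BNSR-invariant, with Lemma~\ref{lem:push_Sigma_m}, which propagates BNSR-membership along ascending HNN extensions, and to chain this across the eight decompositions listed at the end of Section~\ref{sec:HNN}. For any Lodha--Moore group $H$, the kernel $\ker(\psi|_H)$ is a normal subgroup containing $[H,H]$ (since the quotient is abelian), and because $H^{ab}\cong\Z^3$ with $\psi$ spanning a rank-one direction, the non-zero characters of $H$ vanishing on $\ker(\psi|_H)$ are precisely the positive and negative multiples of $\psi$. So by Citation~\ref{cit:bnsr_fin_props}, ($\textrm{C}_m$) for $H$ is equivalent to both $[\psi]\in\Sigma^m(H)$ and $[-\psi]\in\Sigma^m(H)$.

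Next, I would pass each of the decompositions $H = B *_t$ from (HNN1) through (HNN8) through Lemma~\ref{lem:push_Sigma_m}. In every case the base $B$ is itself (isomorphic to) a Lodha--Moore group---either naturally embedded, or via the identification of a semi-deferred copy $H(s)$ with its ambient Lodha--Moore group from Section~\ref{sec:Flike_HNN}---and $B$ always contains some $y$-generator, so $\psi|_B$ is non-zero and corresponds to $\psi$ on the appropriate Lodha--Moore group. Applying Lemma~\ref{lem:push_Sigma_m} to each of $\psi$ and $-\psi$ therefore gives, for each decomposition, the implication that ($\textrm{C}_m$) for $B$ implies ($\textrm{C}_m$) for $H$.

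Finally, reading off the implications from (HNN4), (HNN7), (HNN6), and (HNN1) in succession produces the cycle
\[
\LMsmall \;\Longrightarrow\; \LMright \;\Longrightarrow\; \LMbig \;\Longrightarrow\; \LMleft \;\Longrightarrow\; \LMsmall
\]
of implications, which visits all four Lodha--Moore groups and so closes the equivalence. The main point to watch for is the ``semi-deferred'' cases (HNN1), (HNN2), (HNN3), (HNN6): before invoking Lemma~\ref{lem:push_Sigma_m} there, one needs to confirm that the isomorphism identifying the base $H(s)$ with a Lodha--Moore group carries $\psi|_{H(s)}$ to $\psi$ on that group, but this is a routine check on the $y$-generators (each $y_{st}$ is sent to $y_t$, and both are assigned the value $1$ by $\psi$).
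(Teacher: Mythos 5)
Your proposal is correct and follows essentially the same route as the paper: translate ($\textrm{C}_m$) into $[\pm\psi]\in\Sigma^m$ via Citation~\ref{cit:bnsr_fin_props}, then push membership of $[\pm\psi]$ along the decompositions (HNN1)--(HNN8) using Lemma~\ref{lem:push_Sigma_m}. Your explicit four-cycle $\LMsmall\Rightarrow\LMright\Rightarrow\LMbig\Rightarrow\LMleft\Rightarrow\LMsmall$ and the check that the semi-deferred identifications carry $\psi$ to $\psi$ are just slightly more explicit packagings of what the paper does.
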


\begin{proof}
 For any two Lodha--Moore groups $H$ and $H'$, (HNN1) through (HNN8) tell us that $H'$ is either an ascending HNN extension of $H$ or else an ascending HNN extension of an ascending HNN extension of $H$. (For example $\LMbig=(\LMsmall*_{y_0^{-1}})*_{y_1}$.) For any such $H=B*_t$, $\pm\psi|_B=\pm\psi$. By Citation~\ref{cit:bnsr_fin_props}, $\ker(\psi)$ in some group is of type $\F_m$ if and only if $[\pm\psi]$ is in $\Sigma^m$ of that group. The result now follows from repeated applications of Lemma~\ref{lem:push_Sigma_m}.
\end{proof}

\begin{corollary}\label{cor:psi_in_Sigma_infty}
 Assume ($\textrm{C}_m$) is true. Then for any Lodha--Moore group $H$, $[\pm\psi]\in\Sigma^m(H)$.
\end{corollary}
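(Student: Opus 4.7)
The plan is to deduce this as an immediate consequence of Citation~\ref{cit:bnsr_fin_props}, applied to the normal subgroup $N \defeq \ker(\psi)$ of $H$. First I would check the hypotheses of the Citation: since $\psi$ is a homomorphism into the abelian group $\Z$, its kernel automatically contains $[H,H]$, and by Observation~\ref{obs:one_for_all} the assumption ($\textrm{C}_m$) for any single Lodha--Moore group guarantees that $\ker(\psi)$ is of type $\F_m$ inside every Lodha--Moore group $H$. Next I would identify all non-zero characters of $H$ vanishing on $\ker(\psi)$: since $\psi(y_\emptyset)=1$, the character $\psi$ surjects onto $\Z$, so $H/\ker(\psi)\cong\Z$, and any character vanishing on $\ker(\psi)$ must factor through this $\Z$ and hence be a real multiple of $\psi$. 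Up to positive scaling these give exactly the two equivalence classes $[\psi]$ and $[-\psi]$ in the character sphere $S(H)$.

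With the setup in place, Citation~\ref{cit:bnsr_fin_props} immediately yields $[\pm\psi]\in\Sigma^m(H)$, since $\ker(\psi)$ is of type $\F_m$. There is no real obstacle once ($\textrm{C}_m$) is granted, as the corollary is essentially a translation of that finiteness hypothesis into BNSR language; the only small checks are that $\psi$ is $\Z$-valued and surjective, and that $\ker(\psi)$ contains $[H,H]$, both of which are immediate. The substantive difficulty, of course, lies in establishing ($\textrm{C}_m$) itself, which the paper addresses later in the section for the $m=2$ case.
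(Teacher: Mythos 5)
Your proof is correct and is essentially the paper's own argument: the paper likewise deduces the corollary immediately from Observation~\ref{obs:one_for_all} together with Citation~\ref{cit:bnsr_fin_props}, and you have simply spelled out the routine checks. One trivial slip: $y_\emptyset$ does not lie in $\LMsmall$, $\LMleft$ or $\LMright$, so to see that $\psi$ is surjective you should cite, say, $\psi(y_{10})=1$ instead.
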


\begin{proof}
 This is immediate from Observation~\ref{obs:one_for_all} and Citation~\ref{cit:bnsr_fin_props}.
\end{proof}

\begin{lemma}[Non-zero $\psi$ coefficient]\label{lem:small_hemispheres_bnsr}
 Suppose ($\textrm{C}_m$) holds. Let $\chi$ be a character of a Lodha--Moore group $H$, with non-zero $\psi$ coefficient. Then $[\chi]\in\Sigma^m(H)$.
\end{lemma}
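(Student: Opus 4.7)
The plan is to restrict $\chi$ along a two-step chain of HNN decompositions from the list (HNN1)--(HNN8) until only a non-zero multiple of $\psi$ remains, and then push back up through Lemma~\ref{lem:push_Sigma_m}. The ``seed'' datum $[\pm\psi]\in\Sigma^m$ at the bottom of each chain is supplied by Corollary~\ref{cor:psi_in_Sigma_infty}.

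The first step is to record how each basis character restricts. Because $\chi_0$, $\chi_1$, $\psi_0$, $\psi_1$ are supported on $x_{0^n}$, $x_{1^n}$, $y_{0^n}$, $y_{1^n}$ respectively, each of these characters vanishes on any HNN base lacking the corresponding generators. Explicitly: $\chi_0$ vanishes on the bases of (HNN1) and (HNN3), namely $\LMsmall(1)\cong\LMleft$ and $\LMright(1)\cong\LMbig$; $\chi_1$ vanishes on those of (HNN2) and (HNN6), namely $\LMsmall(0)\cong\LMright$ and $\LMleft(0)\cong\LMbig$; $\psi_0$ vanishes on the naturally embedded $\LMsmall\le\LMleft$ and $\LMright\le\LMbig$ (bases of (HNN5) and (HNN7)); and $\psi_1$ vanishes on $\LMsmall\le\LMright$ and $\LMleft\le\LMbig$ (bases of (HNN4) and (HNN8)). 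On the other hand, $\psi$ restricts to the copy of $\psi$ on every base, since all the embeddings and semi-deferral isomorphisms involved send $y$-generators to $y$-generators.

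With these formulas in hand, I would then select a two-step chain for each Lodha--Moore group $H$ that kills both of its non-$\psi$ basis characters in turn. For $H=\LMsmall$ with $\chi=a\chi_0+b\chi_1+c\psi$: first restrict via (HNN2) to $\LMsmall(0)\cong\LMright$, killing $\chi_1$ and leaving $a\chi_0+c\psi$; then restrict via (HNN3) to $\LMright(1)\cong\LMbig$, killing $\chi_0$ and leaving $c\psi$. Parallel chains (HNN5) then (HNN2) for $\LMleft$, (HNN4) then (HNN1) for $\LMright$, and (HNN7) then (HNN4) for $\LMbig$ work identically. In each case the iterated restriction is $c\psi$ on a Lodha--Moore base, which lies in $\Sigma^m$ of that base by Corollary~\ref{cor:psi_in_Sigma_infty}; two successive applications of Lemma~\ref{lem:push_Sigma_m} then give $[\chi]\in\Sigma^m(H)$.

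The non-vanishing hypothesis in each application of Lemma~\ref{lem:push_Sigma_m} is automatic, since the $\psi$-coefficient $c\neq 0$ is preserved through every restriction and $\psi$ is non-trivial on every Lodha--Moore base (it sends each $y$-generator to $1$). The only obstacle is the routine bookkeeping of the restriction formulas above, which is immediate from the definitions in Section~\ref{sec:chars} and the HNN decompositions in Section~\ref{sec:HNN}. Conceptually, the assumption ($\mathrm{C}_m$) plays a purely ``seed'' role via Corollary~\ref{cor:psi_in_Sigma_infty}, and the HNN pushforward mechanism of Lemma~\ref{lem:push_Sigma_m} propagates it to every character with non-zero $\psi$-coefficient.
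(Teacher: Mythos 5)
Your proposal is correct and follows essentially the same route as the paper: restrict twice through the (HNN1)--(HNN8) decompositions to kill both non-$\psi$ coefficients, seed with $[\pm\psi]\in\Sigma^m$ from ($\mathrm{C}_m$) via Corollary~\ref{cor:psi_in_Sigma_infty}, and push back up with two applications of Lemma~\ref{lem:push_Sigma_m}. The only difference is organizational: the paper phrases this as a two-case argument (first assuming the ``right'' coefficient vanishes, then reducing to that case), whereas you write out explicit two-step chains for each group; the chains you choose are valid and the orders in which the coefficients are killed are interchangeable.
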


\begin{proof}
 In the first case we consider, suppose that if $H=\LMsmall$ or $\LMleft$ then $\chi$ has $\chi_1$ coefficient zero and if $H=\LMright$ or $\LMbig$ then $\chi$ has $\psi_1$ coefficient zero (in other words, whichever ``basis character on the right'' is defined and non-zero for $H$, in this case we suppose that coefficient is zero). For whichever $H$ we have, let $B$ and $t$ be such that $H=B*_t$ is the relevant one of (HNN1), (HNN3), (HNN5) or (HNN7). Since $\chi$ has non-zero $\psi$ coefficient, in all four cases $\chi|_B\neq0$, and thanks to our assumption in fact $\chi|_B = a\psi$ for some $a\in\R^\times$. But $[a\psi]\in\Sigma^m(B)$ by ($\textrm{C}_m$), Citation~\ref{cit:bnsr_fin_props} and Observation~\ref{obs:one_for_all}. Hence by Lemma~\ref{lem:push_Sigma_m} we conclude $[\chi]\in\Sigma^m(H)$.
 
 Now suppose $\chi$ has non-zero $\chi_1$ or $\psi_1$ coefficient, depending on which is defined and non-zero for our $H$. Again we consider HNN decompositions, this time (HNN2), (HNN4), (HNN6) and (HNN8); let $B$ and $t$ be such that $H=B*_t$ appears on this list. The restriction of $\chi$ to $B$ now has $\chi_1$ or $\psi_1$ coefficient zero, so by the previous paragraph $[\chi|_B]\in\Sigma^m(B)$ and hence $[\chi]\in\Sigma^m(\LMsmall)$.
\end{proof}

Next we focus on the case when $\chi$ has $\psi$ coefficient zero. (At this point we do not need to care whether ($\textrm{C}_m$) is true.) Since $\Sigma^1(H)^c\subseteq \Sigma^m(H)^c$ for any group $H$, we already know some points that are not in $\Sigma^m$ of our groups; for example $[\chi_0]\in\Sigma^m(\LMsmall)^c$, $[-\psi_1]\in\Sigma^m(\LMright)^c$, etc. The next lemma looks at character classes of the form $[\pm\chi_i]$ and $[\pm\psi_i]$ and shows that they are in $\Sigma^m$ if and only if they are in $\Sigma^1$.

\begin{lemma}\label{lem:poles_bnsr}
 We have $[-\chi_0],[-\chi_1]\in\Sigma^m(\LMsmall)$, $[-\chi_0],[\psi_1]\in\Sigma^m(\LMright)$, $[-\psi_0],[-\chi_1]\in\Sigma^m(\LMleft)$ and $[-\psi_0],[\psi_1]\in\Sigma^m(\LMbig)$.
\end{lemma}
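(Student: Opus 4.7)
The plan is to prove all eight claims by a uniform case-by-case application of Citation~\ref{cit:hnn_bnsr}, using one of the eight HNN decompositions (HNN1) through (HNN8) for each claim. The point is that each character class in the statement is precisely tailored to one of these decompositions: the chosen decomposition $H=B*_t$ has the property that the character in question vanishes on $B$ and evaluates to $1$ on $t$ (up to choice of sign of $t$), after which Citation~\ref{cit:hnn_bnsr} applies since every Lodha--Moore group is of type $\F_m$ for every $m$ (indeed, type $\F_\infty$ by~\cite{lodha14}).

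Concretely, the matching of claims with decompositions is as follows. For $[-\chi_0]\in\Sigma^m(\LMsmall)$, use (HNN1) $\LMsmall\cong\LMleft*_{x_\emptyset}$: the base is $\LMsmall(1)$, whose generators all have subscripts beginning with~$1$, so $-\chi_0$ vanishes on them, while $-\chi_0(x_\emptyset)=1$. For $[-\chi_1]\in\Sigma^m(\LMsmall)$, use (HNN2) $\LMsmall\cong\LMright*_{x_\emptyset^{-1}}$, whose base $\LMsmall(0)$ has generators with subscripts beginning with~$0$ and where $-\chi_1(x_\emptyset^{-1})=1$. For $[-\chi_0]\in\Sigma^m(\LMright)$, use (HNN3) $\LMright\cong\LMbig*_{x_\emptyset}$, with base $\LMright(1)$. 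For $[\psi_1]\in\Sigma^m(\LMright)$, use (HNN4) $\LMright\cong\LMsmall*_{y_1}$: the base $\LMsmall$ contains no $y_{1^n}$ generator, so $\psi_1$ vanishes on it, while $\psi_1(y_1)=1$. Symmetrically, $[-\psi_0]\in\Sigma^m(\LMleft)$ comes from (HNN5), $[-\chi_1]\in\Sigma^m(\LMleft)$ from (HNN6), $[-\psi_0]\in\Sigma^m(\LMbig)$ from (HNN7), and $[\psi_1]\in\Sigma^m(\LMbig)$ from (HNN8).

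In each case the verification is immediate from the explicit formulas for the characters in Section~\ref{sec:chars} together with the description of the base as either a semi-deferred subgroup (with all generator subscripts prefixed by $0$ or $1$, killing $\chi_0$ or $\chi_1$ respectively) or as the naturally included smaller Lodha--Moore group (which by definition omits the $y_{0^n}$ or $y_{1^n}$ generators, killing $\psi_0$ or $\psi_1$ respectively). The stable element is in each case exactly one of $x_\emptyset^{\pm1}$, $y_0^{-1}$, or $y_1$, and is paired with the character whose value on it is $+1$.

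There is essentially no serious obstacle: the eight decompositions assembled in Section~\ref{sec:HNN} are precisely the data required, so the proof reduces to a short enumeration. The only mild subtlety is remembering that for the $F$-like decompositions the base sits inside $H$ as a semi-deferred subgroup $H(0)$ or $H(1)$ rather than as the natural inclusion copy, but this is exactly what makes $\chi_0$ or $\chi_1$ vanish there, so the match is automatic.
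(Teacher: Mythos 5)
Your proposal is correct and is essentially identical to the paper's proof: both apply Citation~\ref{cit:hnn_bnsr} to the eight decompositions (HNN1)--(HNN8), matching each character class to the decomposition whose base lies in its kernel and whose stable element is sent to $1$. The paper only writes out two of the eight cases explicitly, whereas you enumerate all of them, but the argument is the same.
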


\begin{proof}
 This is just a matter of applying Citation~\ref{cit:hnn_bnsr} to the right HNN decompositions. For example, (HNN1), which is $\LMsmall=(\LMleft)*_{x_\emptyset}$, shows $[-\chi_0]\in\Sigma^m(\LMsmall)$, since $-\chi_0(\LMleft)=0$ (recall here $\LMleft$ means $\LMsmall(1)$) and $-\chi_0(x_\emptyset)=1$. As another example, (HNN8), which is $\LMbig=\LMleft*_{y_1}$, shows that $[\psi_1]\in\Sigma^m(\LMbig)$, since $\psi_1(\LMleft)=0$ and $\psi_1(y_1)=1$. The other cases all follow similarly easily.
\end{proof}

With the multiples of the basis characters fully understood, we can now take care of the remaining case, when the $\psi$ coefficient is zero but the other two are not. It turns out that the results mirror the situation for $F$ done in \cite{bieri10}. First we handle ``three quarters'' of this situation (compare to Corollary~2.4 in \cite{bieri10}).

\begin{lemma}\label{lem:long_interval}
 Let $H\in\{\LMsmall,\LMright,\LMleft,\LMbig\}$. If $H=\LMsmall$ or $\LMright$ let $\xi_0=\chi_0$ and if $H=\LMleft$ or $\LMbig$ let $\xi_0=\psi_0$. Similarly let $\xi_1$ be whichever of $\chi_1$ or $-\psi_1$ is defined and non-zero on $H$. Let $\chi=a\xi_0+b\xi_1$ be a character of $H$ with $\psi$ coefficient zero, and with $a,b\neq0$. If $a<0$ or $b<0$ then $[\chi]\in\Sigma^\infty(H)$.
\end{lemma}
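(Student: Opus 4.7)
The plan is to apply Lemma~\ref{lem:push_Sigma_m} to an appropriately chosen one of the eight HNN decompositions $H = B *_t$, picked so that $\chi|_B$ kills one of the two coefficients and leaves a positive multiple of a basis character that Lemma~\ref{lem:poles_bnsr} has already placed in $\Sigma^\infty(B)$. Since each base $B$ is itself a Lodha--Moore group, Lemma~\ref{lem:poles_bnsr} supplies exactly the membership facts we need, and Lemma~\ref{lem:push_Sigma_m} transports $[\chi|_B]\in\Sigma^\infty(B)$ up to $[\chi]\in\Sigma^\infty(H)$.

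The concrete dictionary I would use is as follows: for $H = \LMsmall$ use (HNN2) when $a < 0$ and (HNN1) when $b < 0$; for $H = \LMright$ use (HNN4) when $a < 0$ and (HNN3) when $b < 0$; for $H = \LMleft$ use (HNN6) when $a < 0$ and (HNN5) when $b < 0$; for $H = \LMbig$ use (HNN8) when $a < 0$ and (HNN7) when $b < 0$. In each $a < 0$ case, the base $B$ is chosen so that no generator of $B$ is indexed by a $1^n$-address (either because $B = H(0)$ or because $B$ is a natural subgroup copy of a smaller Lodha--Moore group lacking $y_{1^n}$), so the $\xi_1$-coefficient of $\chi|_B$ vanishes and $\chi|_B$ is a positive scalar times $-\xi_0^B \in \Sigma^\infty(B)$. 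The $b < 0$ cases are symmetric, with $B$ chosen so that no generator is indexed by a $0^n$-address; here $\chi|_B$ becomes a positive multiple of $-\xi_1^B$, keeping in mind that for $\LMright$ and $\LMbig$ the convention $\xi_1 = -\psi_1$ means the $b < 0$ case actually produces a positive multiple of $\psi_1^B$, which is likewise in $\Sigma^\infty(B)$ by Lemma~\ref{lem:poles_bnsr}. In every case $\chi|_B \neq 0$ because at least one of $a, b$ is non-zero and survives the restriction, so the hypothesis of Lemma~\ref{lem:push_Sigma_m} is met.

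The only actual work is verifying that each restriction kills the advertised coefficient. For the $F$-like decompositions (HNN1), (HNN2), (HNN3), (HNN6), this is immediate, since the semi-deferred subgroup $H(1)$ (resp.\ $H(0)$) uses only generators with a $1$-prefix (resp.\ $0$-prefix), and the characters $\chi_0,\psi_0$ (resp.\ $\chi_1,\psi_1$) see only $0^n$-addresses (resp.\ $1^n$-addresses). For the non-$F$-like decompositions (HNN4), (HNN5), (HNN7), (HNN8), the base is a smaller Lodha--Moore group sitting naturally inside $H$, and one checks that this smaller group lacks precisely the $y_{1^n}$ or $y_{0^n}$ generators that $\psi_1$ or $\psi_0$ is designed to detect, so those characters restrict to zero on the base. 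I do not foresee a conceptual obstacle; the only source of potential confusion is the convention $\xi_1 = -\psi_1$ for $\LMright$ and $\LMbig$, which flips the role of $b > 0$ versus $b < 0$ when one translates between $(\xi_0,\xi_1)$-coordinates and the $(\chi_0/\psi_0, \chi_1/\psi_1)$-coordinates in which Lemma~\ref{lem:poles_bnsr} is phrased. Once this is tabulated, each of the eight subcases collapses to a one-line invocation of Lemmas~\ref{lem:push_Sigma_m} and~\ref{lem:poles_bnsr}.
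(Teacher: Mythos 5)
Your proposal is correct and is essentially identical to the paper's own proof: the same case split on $a<0$ versus $b<0$, the same choice of HNN decompositions ((HNN2)/(HNN4)/(HNN6)/(HNN8) for $a<0$ and (HNN1)/(HNN3)/(HNN5)/(HNN7) for $b<0$), and the same appeal to Lemma~\ref{lem:poles_bnsr} followed by Lemma~\ref{lem:push_Sigma_m}, including the correct handling of the sign flip coming from $\xi_1=-\psi_1$ on $\LMright$ and $\LMbig$.
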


\begin{proof}
 First suppose $a<0$. Let $B$ and $t$ be such that the expression $H=B*_t$ is the relevant one of (HNN2), (HNN4), (HNN6) or (HNN8). In all cases, $\xi_1(B)=0$, so $[\chi|_B]=[-\xi_0]$, which is in $\Sigma^\infty(B)$ by Lemma~\ref{lem:poles_bnsr}, and so by Lemma~\ref{lem:push_Sigma_m} we conclude that $[\chi]\in\Sigma^\infty(H)$.

 Now suppose $b<0$. This time let $B$ and $t$ be such that $H=B*_t$ is the relevant one of (HNN1), (HNN3), (HNN5) or (HNN7). Then $\xi_0(B)=0$ so $[\chi|_B]=[b\xi_1]$. This is either $[-\chi_1]$ or $[\psi_1]$, which in either case is in $\Sigma^\infty(B)$. Hence by Lemma~\ref{lem:push_Sigma_m}, $[\chi]\in\Sigma^\infty(H)$.
\end{proof}

Lastly we handle the remaining ``one quarter'' of this situation. We find that in this case, even though $[\chi]\in\Sigma^1(H)$, in fact $[\chi]\in\Sigma^2(H)^c$, so of course $[\chi]\in\Sigma^\infty(H)^c$. We could even show the stronger fact that $[\chi]\in\Sigma^2(H,R)^c$ for any $R$, but we have not (and will not) define the homological invariants $\Sigma^m(H,R)$, and proving this would require a long digression introducing them, plus a couple long proofs, all of which would be simple imitations of those found in Section~2.3 of \cite{bieri10}. As such, we will only cover the homotopical case here, for which the proof amounts to a citation.

\begin{observation}
 Let $H\in\{\LMsmall,\LMright,\LMleft,\LMbig\}$. If $H=\LMsmall$ or $\LMright$ let $\xi_0=\chi_0$ and if $H=\LMleft$ or $\LMbig$ let $\xi_0=\psi_0$. Similarly let $\xi_1$ be whichever of $\chi_1$ or $-\psi_1$ is defined and non-zero on $H$. Let $\chi=a\xi_0+b\xi_1$ be a character of $H$ with $\psi$ coefficient zero. Suppose that $a>0$ and $b>0$. Then $[\chi]\in\Sigma^2(H)^c$.
\end{observation}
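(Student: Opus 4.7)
The approach mirrors the treatment of Thompson's group $F$ in \cite[Section~2.3]{bieri10}: for each of the four cases of $H$, pick an HNN decomposition $H = B*_t$ from the list (HNN1)--(HNN8) with $B$ a Lodha--Moore group, chosen so that the restriction $\chi|_B$, after the canonical identification of $B$ with a semi-deferred subgroup or a natural subgroup copy, becomes a non-zero scalar multiple of one of the two basis characters of $B$ that Theorem~\ref{thrm:BNS} already exhibits as a point of $\Sigma^1(B)^c$. Since $\Sigma^2(B)\subseteq\Sigma^1(B)$, this places $[\chi|_B]$ in $\Sigma^2(B)^c$.

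Concretely: for $H=\LMsmall$ apply (HNN1) and observe that $\chi|_{\LMsmall(1)}$ pulls back under $\LMsmall(1)\cong\LMleft$ to $b\chi_1$, representing $[\chi_1]\in\Sigma^1(\LMleft)^c$; the three remaining cases $H\in\{\LMright,\LMleft,\LMbig\}$ are dispatched analogously using (HNN3), (HNN6), and (HNN7) respectively, the restricted character landing on a non-zero multiple of $-\psi_1$, $\psi_0$, and $-\psi_1$ on the appropriate base. Each restriction computation is a direct unwinding of the definitions of $\chi_i$ and $\psi_i$ on generators, using for instance that $\chi_0$ vanishes on every $x_{1s}$ and that $\psi_0$ vanishes on every generator of $\LMright$.

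The concluding step is the converse direction of Lemma~\ref{lem:push_Sigma_m}: for an ascending HNN extension $H=B*_t$ with $B$ of type $\F_m$ and $\chi|_B\ne 0$, one has $[\chi]\in\Sigma^m(H)$ if and only if $[\chi|_B]\in\Sigma^m(B)$. This iff (for $m=2$) is the citation the excerpt promises; it follows from combining \cite[Proposition~4.1]{meinert96} with \cite[Theorem~B]{meinert97} in exactly the way that yielded Lemma~\ref{lem:push_Sigma_m} in the forward direction. Applied to each of the four setups above it immediately gives $[\chi]\in\Sigma^2(H)^c$.

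The main obstacle, and the reason the excerpt restricts attention to the homotopical $\Sigma^2$, is that the upgrade to the homological analogue $[\chi]\in\Sigma^2(H,R)^c$ for arbitrary coefficients $R$ demands the parallel theory of the homological BNSR-invariants together with homological versions of the HNN-inheritance results used above; every step has such an analogue, but the bookkeeping is substantially heavier and amounts to a direct imitation of the arguments of \cite[Section~2.3]{bieri10}.
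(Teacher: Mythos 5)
The key step of your argument --- the ``converse direction of Lemma~\ref{lem:push_Sigma_m}'' --- is not a theorem, and in fact it is false. The results of Meinert and \cite[Theorem~2.3]{bieri10} underlying Lemma~\ref{lem:push_Sigma_m} give only the implication stated there: $[\chi|_B]\in\Sigma^m(B)$ pushes up to $[\chi]\in\Sigma^m(H)$; nothing lets you conclude $[\chi]\in\Sigma^m(H)^c$ from $[\chi|_B]\in\Sigma^m(B)^c$. You can see that your argument must break down because it never uses the hypothesis $a>0$: in your first case $\chi_0$ vanishes identically on the base $\LMsmall(1)$ of (HNN1), so $\chi|_{\LMsmall(1)}=b\chi_1$ regardless of the sign of $a$. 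The identical reasoning applied to $\chi=-\chi_0+\chi_1$ would therefore place $[\chi]$ in $\Sigma^2(\LMsmall)^c$, contradicting Lemma~\ref{lem:long_interval}, which puts that class in $\Sigma^\infty(\LMsmall)$. (The same phenomenon already occurs for Thompson's group $F=F(1)*_{x_\emptyset}$: the class $[-\chi_0+\chi_1]$ lies in $\Sigma^1(F)$ even though its restriction to $F(1)\cong F$ represents $[\chi_1]\in\Sigma^1(F)^c$.) So the HNN machinery, which carried all of the positive results, is structurally incapable of detecting this remaining quarter of the equator.

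The intended proof uses a different tool entirely: the statement is equivalent to saying that the convex hull in $S(H)$ of the two points of $\Sigma^1(H)^c$ (computed in Theorem~\ref{thrm:BNS}) lies in $\Sigma^2(H)^c$, and this is exactly \cite[Theorem~2.6]{bieri10}, which applies because the Lodha--Moore groups are finitely presented and contain no non-abelian free subgroups. That result, not an HNN restriction argument, is what forces negativity of $\Sigma^2$ on the positive cone spanned by the two missing points.
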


\begin{proof}
 An equivalent way to phrase this is to say that the convex hull in $S(H)$ of the two points in $\Sigma^1(H)^c$ lies in $\Sigma^2(H)^c$. But this is immediate from \cite[Theorem~2.6]{bieri10}, since the Lodha--Moore groups are all finitely presented and contain no non-abelian free subgroups.
\end{proof}

In summary, for any $m\in\N\cup\{\infty\}$, assuming the truth of ($\textrm{C}_m$), we have fully computed $\Sigma^m(H)$ for all four Lodha--Moore groups $H$. Namely, ($\textrm{C}_1$) is true and $\Sigma^1(H)$ is computed in Theorem~\ref{thrm:BNS}, and:

\begin{theorem}\label{thrm:BNSR}
 Let $m>1$, allowing for $m=\infty$. If ($\textrm{C}_m$) is true, then for each Lodha--Moore group $H$, we have that $\Sigma^m(H)$ equals $\Sigma^1(H)$ with the convex hull of $\Sigma^1(H)^c$ removed. \qed
\end{theorem}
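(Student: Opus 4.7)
The plan is to observe that the theorem is essentially a bookkeeping exercise assembling the preceding lemmas into a single statement about the character sphere $S(H)=S^2$. Fix $H\in\{\LMsmall,\LMright,\LMleft,\LMbig\}$ and, as in Lemma~\ref{lem:long_interval}, let $\xi_0,\xi_1$ be the two "basis characters" whose classes constitute $\Sigma^1(H)^c$ by Theorem~\ref{thrm:BNS}. From Section~\ref{sec:chars}, the triple $(\xi_0,\xi_1,\psi)$ is a basis of $\Hom(H,\R)\cong\R^3$, so every character has a unique expression $\chi=a\xi_0+b\xi_1+c\psi$, and the convex hull of $\{[\xi_0],[\xi_1]\}$ in $S(H)$ is exactly the geodesic arc $\{[a\xi_0+b\xi_1]\mid a,b\ge0,\ (a,b)\ne(0,0)\}$.

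First I would verify that every class outside this convex hull lies in $\Sigma^m(H)$, splitting into three disjoint regions according to where $\chi$ sits. If $c\ne0$, then $[\chi]\in\Sigma^m(H)$ by Lemma~\ref{lem:small_hemispheres_bnsr} (which is where the hypothesis ($\textrm{C}_m$) enters). If $c=0$ but $a<0$ or $b<0$, then Lemma~\ref{lem:long_interval} gives $[\chi]\in\Sigma^\infty(H)\subseteq\Sigma^m(H)$. The remaining equatorial classes with $c=0$ and $a,b\ge0$ are precisely the convex hull, except for the antipodal "poles" $[-\xi_0]$ and $[-\xi_1]$ (when one of $a,b$ is zero and the other negative), which are in $\Sigma^m(H)$ by Lemma~\ref{lem:poles_bnsr}; a quick case-check across the four groups confirms that the pairs $\{-\xi_0,-\xi_1\}$ match the four classes listed in that lemma.

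Next I would establish the reverse inclusion: the entire convex hull lies in $\Sigma^m(H)^c$. The two endpoints $[\xi_0],[\xi_1]$ belong to $\Sigma^1(H)^c$, hence to $\Sigma^m(H)^c$ for every $m\ge1$ by the nested chain $\Sigma^1(H)\supseteq\Sigma^2(H)\supseteq\cdots\supseteq\Sigma^m(H)$. The interior classes $[a\xi_0+b\xi_1]$ with $a,b>0$ are handled by the Observation immediately preceding the theorem, which invokes \cite[Theorem~2.6]{bieri10}; since the Lodha--Moore groups are finitely presented and contain no non-abelian free subgroups, this observation places every such class in $\Sigma^2(H)^c$, and the nesting again promotes this to $\Sigma^m(H)^c$ for all $m\ge2$.

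There is no genuine obstacle here; the real work has already been done in the antecedent lemmas, and the only thing to be careful about is that the four cases for $H$ give a consistent picture. In particular, once one fixes the convention $\xi_0\in\{\chi_0,\psi_0\}$ and $\xi_1\in\{\chi_1,-\psi_1\}$ used in Lemma~\ref{lem:long_interval}, the matching between the "pole" classes appearing in $\Sigma^1(H)^c$ and the "antipodes" appearing in Lemma~\ref{lem:poles_bnsr} is uniform across all four groups, and the theorem follows.
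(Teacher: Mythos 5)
Your proposal is correct and matches the paper's (implicit) argument exactly: the theorem is stated with a \qed precisely because it is the assembly of Lemma~\ref{lem:small_hemispheres_bnsr} (non-zero $\psi$ coefficient, where ($\textrm{C}_m$) is used), Lemma~\ref{lem:long_interval} and Lemma~\ref{lem:poles_bnsr} (the equatorial classes off the hull), and the preceding Observation together with the nesting $\Sigma^1\supseteq\Sigma^2\supseteq\cdots$ (the hull itself). Your case decomposition and the matching of $\xi_0,\xi_1$ across the four groups are exactly right.
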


\subsection{The case of $\Sigma^2$}\label{sec:Sigma^2}

It turns out we can verify the conjecture ($\textrm{C}_2$) ``by hand'' and hence compute $\Sigma^2(H)$ for any Lodha--Moore group $H$. We need to show that $\ker(\psi)$ in some $H$ is finitely presented. We will do this for $H=\LMsmall$, i.e., we will show that $K\defeq\ker(\psi)\le\LMsmall$ is finitely presented. The starting point is the finite presentation for $\LMsmall$ (there denoted $G_0$) at the end of \cite[Section~3]{lodha13}. This presentation has three generators,
$$a=x_\emptyset \text{, } b=x_1 \text{ and } c=y_{10} \text{,}$$
and nine relations,
\begin{itemize}
 \item[($\textrm{R1}$)] $ba^{-2}ba^2 b^{-1}a^{-1}b^{-1}a = 1$
 \item[($\textrm{R2}$)] $ba^{-3}ba^3 b^{-1}a^{-2}b^{-1}a^2 = 1$
 \item[($\textrm{R3}$)] $ca^2 b^{-1}a^{-1}c^{-1}aba^{-2} = 1$
 \item[($\textrm{R4}$)] $ab^2 a^{-1}b^{-1}ab^{-1}a^{-1}caba^{-1}bab^{-2}a^{-1}c^{-1} = 1$
 \item[($\textrm{R5}$)] $ca^{-1}bac^{-1}a^{-1}b^{-1}a = 1$
 \item[($\textrm{R6}$)] $ca^{-2}ba^2 c^{-1}a^{-2}b^{-1}a^2 = 1$
 \item[($\textrm{R7}$)] $caca^{-1}c^{-1}ac^{-1}a^{-1} = 1$
 \item[($\textrm{R8}$)] $ca^2 ca^{-2}c^{-1}a^2 c^{-1}a^{-2} = 1$
 \item[($\textrm{R9}$)] $b^2 a^{-1}b^{-1}aca^{-1}bc^{-1}a^{-1}cab^{-1}ab^{-1}c^{-1} = 1$
\end{itemize}
(Note that ($\textrm{R4}$) and ($\textrm{R9}$) are not written right in \cite{lodha13} (v3 on arXiv); as written here they are the correct translations into $a,b,c$ of the relations $[y_{10},x_{01}]=1$ and $y_{10} = x_{10} y_{100} y_{1010}^{-1} y_{1011}$.)

Instead of using $b=x_1$, we first rephrase everything using $d\defeq x_0$. We do this because $d$ and $c$ commute, and this ends up making everything that follows more elegant. We have $d=a^2 b^{-1}a^{-1}$ and $b=a^{-1}d^{-1}a^2$, so a finite presentation for $\LMsmall$ with generating set $a,d,c$ has the nine relations:
\begin{itemize}
 \item[($\textrm{R1}'$)] $a^{-1}d^{-1}a^{-1}d^{-1}a^2 da^{-2}da^2 = 1$
 \item[($\textrm{R2}'$)] $a^{-1}d^{-1}a^{-2}d^{-1}a^3 da^{-3}da^3 = 1$
 \item[($\textrm{R3}'$)] $cdc^{-1}d^{-1} = 1$
 \item[($\textrm{R4}'$)] $d^{-1}ad^{-1}a^{-1}d^2 cd^{-2}ada^{-1}dc^{-1} = 1$
 \item[($\textrm{R5}'$)] $ca^{-2}d^{-1}a^3 c^{-1}a^{-3}da^2 = 1$
 \item[($\textrm{R6}'$)] $ca^{-3}d^{-1}a^4 c^{-1}a^{-4}da^3 = 1$
 \item[($\textrm{R7}'$)] $caca^{-1}c^{-1}ac^{-1}a^{-1} = 1$
 \item[($\textrm{R8}'$)] $ca^2 ca^{-2}c^{-1}a^2 c^{-1}a^{-2} = 1$
 \item[($\textrm{R9}'$)] $a^{-1}d^{-1}ad^{-1}a^{-1}da^2 ca^{-2}d^{-1}a^2c^{-1}a^{-1}ca^{-1}d^2 ac^{-1} = 1$
\end{itemize}

Let $R$ denote the set of these nine relations. Let $X$ be the presentation $2$-complex for this presentation, so $\pi_1(X)\cong \LMsmall$. Let $Y\to X$ be the cover with $\pi_1(Y)\cong K$. The $1$-skeleton of $Y$ consists of vertices $v_n$ for $n\in\Z$ and edges $a_n,d_n,c_n$ for each $n\in\Z$, where $a_n$ and $d_n$ are loops based at $v_n$, and $c_n$ goes from $v_n$ to $v_{n+1}$. Write $c_n^{-1}$ for the opposite orientation of $c_n$. Fix $v_0$ as a basepoint. The fundamental group of $Y$ is clearly generated by edge paths of the form
\begin{align*}
 & c_0,c_1\dots,c_n,a_n,c_n^{-1},\dots,c_0^{-1} \text{, }\\
 & c_0,c_1\dots,c_n,d_n,c_n^{-1},\dots,c_0^{-1} \text{, }\\
 & c_{-1}^{-1}\dots,c_{-n}^{-1},a_{-n},c_{-n},\dots,c_{-1} \text{ and }\\
 & c_{-1}^{-1}\dots,c_{-n}^{-1},d_{-n},c_{-n},\dots,c_{-1}
\end{align*}
for $n\ge0$. By slight abuse of notation we will call these $a_n$ and $d_n$, so for example $a_n\in\pi_1(Y)$ is the loop that goes from $v_0$ to $v_n$ along $c$-edges, then loops around $a_n$, and then comes back to $v_0$ along $c$-edges.

We now have an infinite generating set for $K$, namely
$$\{a_n,d_n\mid n\in\Z\}\text{.}$$
In fact this is just the generating set obtained for $K$ from Schreier's Lemma, using the generating set $\{a,d,c\}$ and the transversal $\{c^n\mid n\in\Z\}$ for $K\hookrightarrow\langle a,d,c\rangle\onto\langle c\rangle$.

We also know something about the $2$-cells of $Y$. For any $(w=1)\in R$, and every $n\in\Z$, let $w_n$ be the path in $Y^{(1)}$ traversed by reading $w$ starting at $v_n$. For example, if $w=ca^{-2}d^{-1}a^3c^{-1}a^{-3}da^2$ is from ($\textrm{R5}'$), then $w_n=a_{n+1}^{-2}d_{n+1}^{-1}a_{n+1}^3 a_n^{-3}d_n a_n^2$. These $w_n$ are loops since the net sum of exponents of $c$ for any $w$ is zero (in other words $\psi(w)=0$). Then $Y$ is obtained from $Y^{(1)}$ by attaching a $2$-cell along each boundary $w_n$.

\medskip

Since $\pi_1(Y)\cong K$, at this point we have an infinite presentation for $K$. The generators are the $a_n,d_n$ for $n\in\Z$, and the relations are $w_n=1$ for $w\in R$ and $n\in\Z$. Hence our presentation for $K$ has the nine infinite families of relations:
\begin{itemize}
 \item[($\textrm{K1}_n'$)] $a_n^{-1}d_n^{-1}a_n^{-1}d_n^{-1}a_n^2 d_n a_n^{-2}d_n a_n^2 = 1$
 \item[($\textrm{K2}_n'$)] $a_n^{-1}d_n^{-1}a_n^{-2}d_n^{-1}a_n^3 d_n a_n^{-3}d_n a_n^3 = 1$
 \item[($\textrm{K3}_n'$)] $d_{n+1}d_n^{-1} = 1$
 \item[($\textrm{K4}_n'$)] $d_n^{-1}a_n d_n^{-1}a_n^{-1}d_n^2 d_{n+1}^{-2}a_{n+1}d_{n+1}a_{n+1}^{-1}d_{n+1} = 1$
 \item[($\textrm{K5}_n'$)] $a_{n+1}^{-2}d_{n+1}^{-1}a_{n+1}^3 a_n^{-3}d_n a_n^2 = 1$
 \item[($\textrm{K6}_n'$)] $a_{n+1}^{-3}d_{n+1}^{-1}a_{n+1}^4 a_n^{-4}d_n a_n^3 = 1$
 \item[($\textrm{K7}_n'$)] $a_{n+1}a_{n+2}^{-1}a_{n+1}a_n^{-1} = 1$
 \item[($\textrm{K8}_n'$)] $a_{n+1}^2 a_{n+2}^{-2}a_{n+1}^2 a_n^{-2} = 1$
 \item[($\textrm{K9}_n'$)] $a_n^{-1}d_n^{-1}a_n d_n^{-1}a_n^{-1}d_n a_n^2 a_{n+1}^{-2}d_{n+1}^{-1}a_{n+1}^2 a_n^{-1}a_{n+1}^{-1}d_{n+1}^2 a_{n+1} = 1$
\end{itemize}

Our first goal is to reduce this presentation to one with finitely many generators. Define
$$z\defeq a_0^{-1}a_1 \text{.}$$

\begin{lemma}[Finite generating set]\label{lem:K_fg}
 For any $n\in\Z$ we have $a_n=a_0 z^n$ and $d_n=d_0$.
\end{lemma}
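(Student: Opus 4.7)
The plan is to read both equalities directly off two of the nine relation families in the infinite presentation for $K$ just derived. The first family, ($\textrm{K3}_n'$), literally states $d_{n+1}d_n^{-1}=1$, so $d_{n+1}=d_n$ for every $n\in\Z$. An obvious induction on $|n|$, starting from $n=0$ and pushing in both directions, then gives $d_n=d_0$ for all $n$.

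For the $a_n$ I would use ($\textrm{K7}_n'$), which reads $a_{n+1}a_{n+2}^{-1}a_{n+1}a_n^{-1}=1$. Rearranging gives $a_{n+1}a_{n+2}^{-1}a_{n+1}=a_n$, i.e.\ $a_{n+2}^{-1}a_{n+1}=a_{n+1}^{-1}a_n$, and inverting both sides yields $a_{n+1}^{-1}a_{n+2}=a_n^{-1}a_{n+1}$. Thus the quantity $z_n\defeq a_n^{-1}a_{n+1}$ is independent of $n$, hence equal to $z_0=z$. Therefore $a_{n+1}=a_n z$ for every $n\in\Z$, and a straightforward induction on $|n|$ (upward via $a_{n+1}=a_n z$, downward via $a_{n-1}=a_n z^{-1}$), starting from the trivial case $a_0=a_0 z^0$, produces $a_n=a_0 z^n$.

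There is essentially no obstacle at this stage: the two families ($\textrm{K3}_n'$) and ($\textrm{K7}_n'$) already encode the conclusion almost verbatim. The substantive work lies ahead, namely rewriting the remaining seven relation families in terms of the three generators $a_0$, $d_0$, $z$ and then showing that all but finitely many of the resulting relations are consequences of a finite subset; that is where the finite presentation of $K$ will actually be extracted, and where one expects the real difficulty (and the use of relations like ($\textrm{K8}_n'$), which is not needed here but will constrain how $z$ interacts with $a_0$) to appear.
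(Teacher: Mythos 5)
Your proof is correct and follows essentially the same route as the paper: ($\textrm{K3}_n'$) gives $d_n=d_0$ directly, and ($\textrm{K7}_n'$) rearranges to $a_n^{-1}a_{n+1}=a_{n+1}^{-1}a_{n+2}$, after which a telescoping/induction argument yields $a_n=a_0z^n$. No gaps.
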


\begin{proof}
 By definition, $z^n = (a_0^{-1}a_1)^n$. By the relations ($\textrm{K7}_n'$), $a_n^{-1}a_{n+1} = a_{n+1}^{-1}a_{n+2}$ for all $n\in\Z$. Hence for $n\ge0$ we have
 $$z^n = (a_0^{-1}a_1)(a_1^{-1}a_2)\cdots(a_{n-1}^{-1}a_n) = a_0^{-1}a_n$$
 and for $n<0$ we have
 $$z^n=(a_0^{-1}a_{-1})(a_{-1}^{-1}a_{-2})\cdots(a_{n+1}^{-1}a_n) = a_0^{-1}a_n \text{.}$$
 In either case, $a_n=a_0 z^n$.
 
 The fact that $d_n=d_0$ for all $n$ is just the content of the relations ($\textrm{K3}_n'$).
\end{proof}

Setting $a=a_0$ and $d=d_0$ (which were their names in $\LMsmall$ anyway), we can now convert our presentation for $K$ into one using just the generators $a$, $d$ and $z$. After the substitutions $a_n=az^n$ and $d_n=d$, and some free cyclic reductions, our nine families of relations become:
\begin{itemize}
 \item[($\textrm{K1}_n$)] $d^{-1}z^{-n}a^{-1}d^{-1}az^n az^n dz^{-n}a^{-1}z^{-n}a^{-1}daz^n = 1$
 \item[($\textrm{K2}_n$)] $d^{-1}z^{-n}a^{-1}z^{-n}a^{-1}d^{-1}az^n az^n az^n dz^{-n}a^{-1}z^{-n}a^{-1}z^{-n}a^{-1}daz^n az^n = 1$
 \item[($\textrm{K3}_n$)] $1=1$
 \item[($\textrm{K4}_n$)] $d^{-1}zdz^{-1} = 1$
 \item[($\textrm{K5}_n$)] $z^{-1}a^{-1}z^{-(n+1)}a^{-1}d^{-1}az^{n+1}az^{n+1}aza^{-1}z^{-n}a^{-1}z^{-n}a^{-1}daz^n a = 1$
 \item[($\textrm{K6}_n$)] $z^{-1}a^{-1}z^{-(n+1)}a^{-1}z^{-(n+1)}a^{-1}d^{-1}az^{n+1}az^{n+1}az^{n+1}aza^{-1}z^{-n}a^{-1}z^{-n}a^{-1} z^{-n}a^{-1}daz^n az^n a = 1$
 \item[($\textrm{K7}_n$)] $1 = 1$
 \item[($\textrm{K8}_n$)] $zaz^{-1}a^{-1}z^{-1}aza^{-1} = 1$
 \item[($\textrm{K9}_n$)] $a^{-1}d^{-1}az^n d^{-1}z^{-n}a^{-1}daz^n az^{-1}a^{-1}z^{-(n+1)}a^{-1}d^{-1}az^{n+1}aza^{-1}z^{-(n+1)}a^{-1}d^2az = 1$
\end{itemize}
Note that ($\textrm{K3}_n$) and ($\textrm{K7}_n$) are trivial, and that ($\textrm{K4}_n$) and ($\textrm{K8}_n$) are independent of $n$, so we may rename them ($\textrm{K4}$) and ($\textrm{K8}$). We can write them in the following more concise forms:
\begin{itemize}
 \item[($\textrm{K4}$)] $[z,d]=1$.
 \item[($\textrm{K8}$)] $[z,aza^{-1}]=1$.
\end{itemize}

\medskip

To recap, we have $K\cong\langle a,d,z\mid (\textrm{K1}_n) \text{ through }(\textrm{K9}_n) \text{ hold for all }n\rangle$ and we would like to find a finite presentation.

We introduce three additional relations that will prove useful.
\begin{itemize}
 \item[($\textrm{K10}$)] $[z,ada^{-1}]=1$
 \item[($\textrm{K11}$)] $[z,a^2 da^{-2}]=1$
 \item[($\textrm{K12}$)] $[z,a^2 za^{-2}]=1$
 \item[($\textrm{K13}$)] $[z,a^3 za^{-3}]=1$
\end{itemize}

\begin{observation}
 ($\textrm{K10}$) through ($\textrm{K13}$) hold in $K$.
\end{observation}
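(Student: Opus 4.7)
The plan is to verify each of (K10)--(K13) by working inside the ambient group $\LMsmall$, in which $K$ sits as a subgroup. Every relation in question has the form ``commutator equals~$1$'', so it suffices to exhibit, for each pair of elements, disjoint supports on the Cantor set $2^{\N}$; two such homeomorphisms then automatically commute.

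The first step is to identify $z$ concretely in $\LMsmall$. Under the Schreier-style identification $a_n = c^n a c^{-n}$, we have
\[
 z \;=\; a_0^{-1} a_1 \;=\; a^{-1}\,c\,a\,c^{-1} \;=\; x_\emptyset^{-1}\,y_{10}\,x_\emptyset\,y_{10}^{-1}.
\]
Applying relation (LM3) to rewrite $x_\emptyset^{-1}y_{10}x_\emptyset = y_{10.x_\emptyset} = y_{110}$ gives $z = y_{110}\,y_{10}^{-1}$, so $\operatorname{supp}(z)$ lies inside the set of infinite strings with prefix in $\{10,110\}$; in particular $\operatorname{supp}(z)\subseteq 1\cdot 2^{\N}$.

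Next I would compute the supports of the four conjugates $ada^{-1},a^2da^{-2},a^2za^{-2},a^3za^{-3}$. The general fact used is that for $g,h\in \LMsmall$, $\operatorname{supp}(hgh^{-1}) = \operatorname{supp}(g).h^{-1}$. Using the piecewise description of $x=x_\emptyset$ on $2^{\N}$ (specifically, $\xi.x$ has prefix $0$ iff $\xi$ has prefix $00$; $\xi.x$ has prefix $10$ iff $\xi$ has prefix $01$; $\xi.x$ has prefix $11$ iff $\xi$ has prefix $1$), one checks by a short unwinding that $ada^{-1}$ is supported on $\{00\}\cdot 2^{\N}$; that $a^2da^{-2}$ is supported on $\{000\}\cdot 2^{\N}$; that $a^2za^{-2}$ is supported on $\{001,01\}\cdot 2^{\N}$; and that $a^3za^{-3}$ is supported on $\{0001,001\}\cdot 2^{\N}$. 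In each case the support lies entirely inside the ``$0$-subtree'' $0\cdot 2^{\N}$.

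Since these supports are disjoint from $\operatorname{supp}(z)\subseteq 1\cdot 2^{\N}$, each of the four conjugates commutes with $z$ in $\LMsmall$, which is precisely (K10)--(K13) in $K$. The only genuinely computational step is tracking the supports in the middle step, and it is routine because each additional conjugation by $a^{-1}$ merely pushes the relevant prefix one level deeper into the $0$-branch. I do not anticipate any real obstacle; once $z$ is written in terms of the $y$-generators, the observation reduces to a disjoint-support triviality.
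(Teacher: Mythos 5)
Your proof is correct and takes essentially the same route as the paper's: the paper likewise converts everything back into the generators $x_s,y_s$, obtaining $z=y_{110}y_{10}^{-1}$, $ada^{-1}=x_{00}$, $a^2da^{-2}=x_{000}$, $a^2za^{-2}=y_{01}y_{001}^{-1}$ and $a^3za^{-3}=y_{001}y_{0001}^{-1}$, and then concludes commutativity because the addresses appearing in $z$ lie in the $1$-subtree while those of the four conjugates lie in the $0$-subtree. You simply spell out the disjoint-support step that the paper declares to be clear.
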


\begin{proof}
 Convert everything back into the generators $x_s$, $y_s$. We have $z = y_{110}y_{10}^{-1}$, $ada^{-1} = x_{00}$, $a^2 da^{-2} = x_{000}$, $a^2 za^{-2} = y_{01}y_{001}^{-1}$ and $a^3 za^{-3} = y_{001}y_{0001}^{-1}$. From this it is clear that ($\textrm{K10}$) through ($\textrm{K13}$) are true statements.
\end{proof}

\begin{proposition}[Finite set of relations]
 The relations ($\textrm{K1}_n$) through ($\textrm{K9}_n$) ($n\in\Z$) are all deducible from the eleven relations ($\textrm{K1}_0$), ($\textrm{K2}_0$), ($\textrm{K4}$), ($\textrm{K5}_0$), ($\textrm{K6}_0$), ($\textrm{K8}$), ($\textrm{K9}_0$), ($\textrm{K10}$), ($\textrm{K11}$), ($\textrm{K12}$) and ($\textrm{K13}$).
\end{proposition}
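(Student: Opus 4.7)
The plan rests on a free-group endomorphism trick. Let $F_3=\langle a,d,z\rangle$ and define $\phi\colon F_3\to F_3$ by $\phi(a)=az$, $\phi(d)=d$, $\phi(z)=z$, together with the analogous inverse $\phi^{-1}\colon a\mapsto az^{-1}$, $d\mapsto d$, $z\mapsto z$. Inspecting each word reveals a free-group identity of the form $\phi(\textrm{K}i_0)=z^{-e_i}(\textrm{K}i_1)z^{e_i}$ for an integer $e_i$ depending only on $i$ (one computes $e_1=e_2=0$ and $e_5=e_6=1$; the general exponent is determined by the outermost $z^{\pm}$ factors of the word, and $\phi^n(\textrm{K}i_0)$ equals $z^{-e_i n}(\textrm{K}i_n)z^{e_i n}$ by iteration). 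In particular $\phi^n(\textrm{K}i_0)$ is a conjugate of $(\textrm{K}i_n)$ in $F_3$, so once we show that $\phi$ and $\phi^{-1}$ descend to endomorphisms of the group $G$ presented by the eleven listed relations, applying $\phi^{\pm n}$ to the relation $(\textrm{K}i_0)=1$ forces $(\textrm{K}i_n)=1$ in $G$ for every $n\in\mathbb{Z}$ and every $i\in\{1,2,5,6,9\}$, completing the proof.

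To show $\phi$ descends to $G$, we check that $\phi$ carries each of the eleven defining relations into the normal closure $N$ of these relations in $F_3$. For the six commutator relations this is short bookkeeping: $\phi(\textrm{K10})=[z,(az)d(az)^{-1}]=[z,a(zdz^{-1})a^{-1}]$ reduces to $[z,ada^{-1}]=\textrm{K10}$ once (K4) is invoked, and the verifications for (K11), (K12), and (K13) cascade through previously-treated commutator relations in the same fashion; the check for $\phi^{-1}$ is parallel.

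The essential task is to verify $\phi(\textrm{K}i_0)\in N$ for $i\in\{1,2,5,6,9\}$; equivalently, since $\phi(\textrm{K}i_0)$ is a conjugate of $(\textrm{K}i_1)$ and $N$ is normal, that $(\textrm{K}i_1)\in N$. For $i=1$ there is a clean identity: $(\textrm{K1}_n)=z^{-n}(\textrm{K1}_0)z^n$ modulo (K4) and (K10). Indeed, (K4) lets one collapse the interior block $az^n\,d\,z^{-n}a^{-1}$ to $ada^{-1}$, and then (K10) in the form $(ada^{-1})z^{-n}=z^{-n}(ada^{-1})$ cancels the residual $z^{\pm n}$ pairs; setting $n=1$ places $(\textrm{K1}_1)$ in $N$. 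For $i\in\{2,5,6,9\}$ a parallel but longer calculation applies: each interior block of the form $az^n(a^kda^{-k})z^{-n}a^{-1}$ (with $k\le 2$) or $az^n(a^kza^{-k})z^{-n}a^{-1}$ (with $k\le 3$) collapses using (K4), (K8), (K10)--(K13), and residual $z^{\pm n}$ factors collect into outer conjugations, exhibiting $(\textrm{K}i_1)$ as a conjugate of $(\textrm{K}i_0)$ modulo the commutator relations and hence placing it in $N$.

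I expect the main obstacle to be the case $i=6$: its word contains a $(az^{n+1})^4(z^{-n}a^{-1})^4$ block that, after partial boundary nesting into $(aza^{-1})$ pairs, still leaves residual powers of $a$ as high as $a^{\pm 4}$; none of the listed commutator relations supplies $[z,a^{\pm 4}za^{\mp 4}]=1$ outright. The argument must therefore fully nest the $(az^{n+1})(z^{-n}a^{-1})$ pairs into $(aza^{-1})$ blocks and then successively apply (K8), (K12), and (K13) to migrate the $z$'s outward, collapsing the entire middle to an $a^{\pm 3}$-level expression. The strand-diagram calculus from Section~\ref{sec:strands} provides a useful visual scaffold for organizing these manipulations.
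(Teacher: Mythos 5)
Your approach is correct, and at its computational heart it coincides with the paper's: in both cases the real work is showing that for $i\in\{1,2,5,6,9\}$ the word ($\textrm{K}i_n$) collapses, modulo the commutator relations ($\textrm{K4}$), ($\textrm{K8}$), ($\textrm{K10}$)--($\textrm{K13}$), to a word independent of $n$ (equivalently, to a conjugate of ($\textrm{K}i_0$)). The paper simply performs that reduction uniformly in $n$ and is done. Your endomorphism $\phi\colon a\mapsto az$, $d\mapsto d$, $z\mapsto z$ is a valid way to package the induction on $n$ --- the free-group identity $\phi(\textrm{K}i_n)=z^{-e_i}(\textrm{K}i_{n+1})z^{e_i}$ does hold, and descent of $\phi^{\pm1}$ to the finitely presented group plus iteration would finish the argument --- but it buys you essentially nothing: verifying that $\phi$ descends already forces you to show ($\textrm{K}i_1$) lies in the normal closure $N$, and the commutator relations move a block $z^{\pm n}$ for arbitrary $n$ exactly as easily as for $n=1$, since $[z,a^kda^{-k}]=1$ and $[z,a^kza^{-k}]=1$ imply the corresponding statements for all powers. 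So once you have done the $n=1$ computation you have in effect done the general one, and the $\phi$-layer can be deleted; you would also need to spell out the $\phi^{-1}$ side for negative $n$, which the direct uniform-in-$n$ argument gets for free.

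The obstacle you flag for $i=6$ is not a real one. Writing $Z_k=a^kza^{-k}$, the block of ($\textrm{K6}_n$) between $d^{-1}$ and $d$ is exactly $Z_1^{n+1}Z_2^{n+1}Z_3^{n+1}Z_4Z_3^{-n}Z_2^{-n}Z_1^{-n}$, and the remaining outer block is $Z_0^nZ_1^nZ_2^{-1}Z_1^{-(n+1)}Z_0^{-(n+1)}$ followed by $a^{-1}$. Collapsing these to $Z_1Z_2Z_3Z_4$ and $Z_0^{-1}Z_2^{-1}Z_1^{-1}$ only ever requires commuting $Z_i$ past $Z_j$ with $|i-j|\le 3$, and each such commutator $[Z_i,Z_j]=a^i[z,a^{j-i}za^{-(j-i)}]a^{-i}$ is a conjugate of ($\textrm{K8}$), ($\textrm{K12}$) or ($\textrm{K13}$), hence lies in $N$ automatically. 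In particular $[z,a^4za^{-4}]$ is never needed, because $Z_0$ and $Z_4$ never have to be moved past one another. With that observation the $i=6$ case goes through exactly like the others and reproduces the paper's $n$-independent form $z^{-1}a^2z^{-1}a^{-1}z^{-1}a^{-2}d^{-1}azazazaza^{-4}da=1$.
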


\begin{proof}
 First look at ($\textrm{K1}_n$). Applying ($\textrm{K4}$), we find that ($\textrm{K1}_n$) is equivalent to
 $$d^{-1}a^{-1}d^{-1}az^n ada^{-1}z^{-n} a^{-1}da = 1$$
 for all $n\in\Z$. Now applying ($\textrm{K10}$), this is equivalent to
 $$d^{-1}a^{-1}d^{-1}a^2 da^{-2}da = 1 \text{.}$$
 Since this holds for all $n\in\Z$, and the last expression is independent of $n$, we conclude that the family ($\textrm{K1}_n$) is derivable from just ($\textrm{K1}_0$), ($\textrm{K4}$) and ($\textrm{K10}$).
 
 Now look at ($\textrm{K2}_n$). Applying ($\textrm{K4}$) and conjugating, ($\textrm{K2}_n$) is equivalent to
 $$z^n ad^{-1}a^{-1}z^{-n}a^{-1}d^{-1}az^n az^n ada^{-1}z^{-n}a^{-1}z^{-n} a^{-1}da = 1$$
 for all $n\in\Z$. Now we apply ($\textrm{K10}$) to get
 $$ad^{-1}a^{-2}d^{-1}az^n a^2 da^{-2}z^{-n} a^{-1}da = 1$$
 and then ($\textrm{K11}$) to get
 $$ad^{-1}a^{-2}d^{-1}a^3 da^{-3}da = 1 \text{.}$$
 This is independent of $n$, so we conclude the family ($\textrm{K2}_n$) is derivable from ($\textrm{K2}_0$), ($\textrm{K4}$), ($\textrm{K10}$) and ($\textrm{K11}$).
 
 The next family is ($\textrm{K5}_n$). After conjugating and applying ($\textrm{K8}$), we see ($\textrm{K5}_n$) is equivalent to
 $$az^{-1}a^{-1}z^{-1}a^{-1}d^{-1}a^2 za^{-1}z^{n+1}a^2 za^{-2}z^{-n}a^{-1}da = 1$$
 for any $n\in\Z$. Now we apply ($\textrm{K12}$) and get
 $$az^{-1}a^{-1}z^{-1}a^{-1}d^{-1}a^2 za^{-1}za^2 za^{-3}da = 1 \text{,}$$
 which is derivable from ($\textrm{K5}_0$), ($\textrm{K8}$) and ($\textrm{K12}$).
 
 Now we move to ($\textrm{K6}_n$). After some cyclic reduction, ($\textrm{K6}_n$) is equivalent to
 $$z^n az^n az^{-1}a^{-1}z^{-(n+1)}a^{-1}z^{-(n+1)}a^{-1}d^{-1}az^{n+1}az^{n+1}az^{n+1}aza^{-1}z^{-n}a^{-1}z^{-n}a^{-1}z^{-n}a^{-1}da = 1 \text{.}$$
 Repeated applications of ($\textrm{K8}$), ($\textrm{K12}$) and ($\textrm{K13}$) yield
 $$z^{-1}a^2 z^{-1}a^{-1}z^{-1}a^{-2}d^{-1}azazazaza^{-4}da = 1 \text{,}$$
 which is derivable from ($\textrm{K6}_0$), ($\textrm{K8}$), ($\textrm{K12}$) and ($\textrm{K13}$).
 
 The last family is ($\textrm{K9}_n$). We start with
 $$a^{-1}d^{-1}az^n d^{-1}z^{-n}a^{-1}daz^n az^{-1}a^{-1}z^{-(n+1)}a^{-1}d^{-1}az^{n+1}aza^{-1}z^{-(n+1)}a^{-1}d^2az = 1$$
 and apply ($\textrm{K4}$) and ($\textrm{K8}$) to get
 $$a^{-1}d^{-1}ad^{-1}a^{-1}da^2 z^{-1}a^{-1}z^{-1}a^{-1}d^{-1}a^2 za^{-2}d^2az = 1 \text{,}$$
 which is derivable from ($\textrm{K4}$), ($\textrm{K8}$) and ($\textrm{K9}_0$).
\end{proof}

This proves that $K$ is finitely presented, which was conjecture ($\textrm{C}_2$), and so Theorem~\ref{thrm:BNSR} holds for $m=2$.

\bibliographystyle{alpha}

\end{document}